\documentclass[eqthmnum]{jt-calcs}
\usepackage{mathrsfs}
\usepackage[backend=bibtex,style=alphabetic,sorting=nyt]{biblatex}
\bibliography{bibliography}
\AtEveryBibitem{%
  \clearfield{pagetotal}%
}

\usepackage{tikz-cd}

\usepackage{kbordermatrix}
\usepackage{booktabs}
\usepackage{multirow}

\newcommand{\Spr}{\ensuremath{\mathsf{Spr}}}

\newcommand{\Iso}{\ensuremath{\mathrm{Iso}}}
\newcommand{\Cusp}{\ensuremath{\mathrm{Cusp}}}

\newcommand{\Frob}{\ensuremath{\mathrm{Frob}}}

%%%% Symbol commands %%%%
\newcommand{\sbpair}[2]{\ensuremath{\bigl(\begin{smallmatrix} #1 \\ #2 \end{smallmatrix}\bigr)}}
\newcommand{\ssymb}[2]{\ensuremath{\bigl[\begin{smallmatrix} #1 \\ #2 \end{smallmatrix}\bigr]}}
\newcommand{\bpair}[2]{\ensuremath{\begin{pmatrix} #1 \\ #2 \end{pmatrix}}}
\newcommand{\symb}[2]{\ensuremath{\begin{bmatrix} #1 \\ #2 \end{bmatrix}}}

\setlist[enumerate,1]{label=(\alph*)}

\title{Action of Automorphisms on Irreducible Characters of Symplectic Groups}
\author{Jay Taylor}
\address{Department of Mathematics, University of Arizona, 617 N. Santa Rita Ave., Tucson AZ 85721, United States.}
\email{jaytaylor@math.arizona.edu}
\mscno{2010}{20C30}{20C15}
\keywords{Finite groups, automorphisms, symplectic groups, McKay conjecture, generalised Gelfand--Graev representations.}

\begin{document}
\begin{abstract}
Assume $G$ is a finite symplectic group $\Sp_{2n}(q)$ over a finite field $\mathbb{F}_q$ of odd characteristic. We describe the action of the automorphism group $\Aut(G)$ on the set $\Irr(G)$ of ordinary irreducible characters of $G$. This description relies on the equivariance of Deligne--Lusztig induction with respect to automorphisms. We state a version of this equivariance which gives a precise way to compute the automorphism on the corresponding Levi subgroup; this may be of independent interest. As an application we prove that the global condition in Sp\"ath's criterion for the inductive McKay condition holds for the irreducible characters of $\Sp_{2n}(q)$.
\end{abstract}

\section{Introduction}
\begin{pa}
The representation theory of finite groups is abound with many deep and fascinating conjectures nicknamed local/global conjectures; the paradigm of these is the McKay Conjecture. In a landmark paper \cite{isaacs-malle-navarro:2007:a-reduction-theorem-for-the-mckay-conj} Isaacs--Malle--Navarro showed that the McKay Conjecture holds for all finite groups if a list of (stronger) conditions, jointly referred to as the inductive McKay condition, holds for the universal covering group of each finite simple group. We note that in the wake of \cite{isaacs-malle-navarro:2007:a-reduction-theorem-for-the-mckay-conj} several other major local/global conjectures have also been reduced to checking certain inductive conditions.
\end{pa}

\begin{pa}
Showing that these inductive conditions hold has revealed itself to be a difficult problem. One of the main difficulties arises from the fact that one needs some knowledge of how the automorphism group $\Aut(G)$, of a quasisimple group $G$, acts on the set of irreducible characters $\Irr(G)$. When the simple quotient $G/Z(G)$ is a group of Lie type then this question has turned out to be surprisingly vexing considering the large amount of machinery at our disposal. The main result of this paper gives a complete solution to this problem when $G = \Sp_{2n}(q)$ and $q$ is a power of an odd prime. Note that the corresponding statement when $q$ is even is an easy consequence of known results from Lusztig's classification of irreducible characters \cite{lusztig:1984:characters-of-reductive-groups}, see also \cite{digne-michel:1990:lusztigs-parametrization} and \cite{cabanes-spaeth:2013:equivariance-connected-centre}.
\end{pa}

\begin{pa}
In \cite[Theorem 2.12]{spaeth:2012:inductive-mckay-defining} Sp\"ath gave a version of the inductive McKay condition which is specifically tailored to the finite groups of Lie type. There are several conditions in this statement, some of which are global and some of which are local. As an application of our result we show that the global condition concerning the stabilisers of irreducible characters of $G$ under automorphisms holds for finite symplectic groups over fields of odd characteristic, see \cref{prop:mckay}. We note that Cabanes and Sp\"ath \cite{cabanes-spaeth:2016:inductive-mckay-symplectic} have recently shown the whole inductive McKay condition holds for the symplectic groups, hence independently proving \cref{prop:mckay} using completely different methods.
\end{pa}

\begin{pa}
To state our result we need to introduce some notation. Let $\bG$ be a connected reductive algebraic group defined over an algebraic closure $\overline{\mathbb{F}}_p$ of the finite field $\mathbb{F}_p$ of prime order $p$. We assume $F :\bG \to \bG$ is a Frobenius endomorphism endowing $\bG$ with an $\mathbb{F}_q$-rational structure $\bG^F$. If $\bG^{\star F^{\star}}$ is a group dual to $\bG^F$ then to each semisimple element $s \in \bG^{\star F^{\star}}$ we have a corresponding Lusztig series $\mathcal{E}(\bG^F,s) \subseteq \Irr(\bG^F)$ of irreducible characters. Moreover, we have $\Irr(\bG^F) = \bigsqcup_{[s]} \mathcal{E}(\bG^F,s)$ is a disjoint union of these series where the union runs over all the $\bG^{\star F^{\star}}$-conjugacy classes of semisimple elements. As we will see, a particularly important role is played by those series $\mathcal{E}(\bG^F,s)$ for which $s$ is a quasi-isolated in $\bG^{\star}$. We recall that this means $C_{\bG^{\star}}(s)$ is not contained in any proper Levi subgroup of $\bG^{\star}$.
\end{pa}

\begin{pa}
If $\Aut(\bG^F)$ denotes the automorphism group of $\bG^F$ then we set ${}^{\sigma}\chi = \chi\circ\sigma^{-1}$ for any $\sigma \in \Aut(\bG^F)$ and $\chi \in \Irr(\bG^F)$; this defines an action of $\Aut(\bG^F)$ on $\Irr(\bG^F)$. The automorphism group $\Aut(\bG^F)$ is well known to be generated by inner, diagonal, field, and graph automorphisms. In the case of diagonal automorphisms the action on $\Irr(\bG^F)$ is well understood by work of Lusztig \cite{lusztig:1988:reductive-groups-with-a-disconnected-centre}. Moreover, in the case of symplectic groups defined over a field of odd characteristic there are no graph automorphisms. With this in place we may state our main result.
\end{pa}

\begin{thm}\label{thm:main-theorem}
Assume $\bG^F = \Sp_{2n}(q)$ with $q$ odd. If $\sigma \in \Aut(\bG^F)$ is a field automorphism and $\chi \in \mathcal{E}(\bG^F,s)$ is an irreducible character, with $s \in \bG^{\star F^{\star}}$ quasi-isolated in $\bG^{\star}$, then we have ${}^{\sigma}\chi = \chi$.
\end{thm}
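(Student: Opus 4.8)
The plan is to reduce to a single field automorphism and then show it fixes the whole Lusztig series. It suffices to treat the field automorphism induced by the $p$-th power Frobenius endomorphism, which I denote $F_p\colon\bG\to\bG$, since the field automorphisms of $\Sp_{2n}(q)$ form a cyclic group generated by it. Passing to the dual group $\bG^\star = \mathrm{SO}_{2n+1}$, of adjoint type $B_n$, the endomorphism $F_p$ has a dual $F_p^\star\colon\bG^\star\to\bG^\star$, again a $p$-power Frobenius, and the compatibility of Lusztig series with isogenies gives ${}^{F_p}\mathcal{E}(\bG^F,s) = \mathcal{E}(\bG^F,F_p^\star(s))$ (up to the standard conventions on $F_p^\star$). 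Now the quasi-isolated semisimple classes of $\mathrm{SO}_{2n+1}$ are those of $s=1$ and of the involutions whose $(-1)$-eigenspace on the natural $(2n+1)$-dimensional module has even dimension $2b$; every such class has a representative that is a diagonal matrix with entries in $\{0,\pm1\}$, and such a matrix is fixed by $F_p^\star$. Hence $F_p^\star$ stabilises each quasi-isolated class, so ${}^{F_p}$ stabilises $\mathcal{E}(\bG^F,s)$ as a set, and it remains to prove that the permutation of $\mathcal{E}(\bG^F,s)$ it induces is trivial.

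For this I would transport the question through the Jordan decomposition to a statement about unipotent characters. By Lusztig's Jordan decomposition — in the form valid for possibly disconnected centralisers, which is needed here since for quasi-isolated $s\neq1$ the group $C_{\bG^\star}(s)$ has component group of order $2$ — there is a bijection $\mathcal{E}(\bG^F,s)\to\mathcal{E}(C_{\bG^\star}(s)^{F^\star},1)$ onto the set of unipotent characters of the (disconnected) group $C_{\bG^\star}(s)$. The key point is that this bijection can be chosen $F_p$-equivariantly, and it is precisely here that the equivariance of Deligne--Lusztig and Lusztig induction with respect to automorphisms — in the refined form recorded earlier, which pins down the automorphism induced on the relevant Levi subgroup — together with the equivariance of the generalised Gelfand--Graev representations, does the work: both constructions commute with $F_p$ in an explicit way (via the dual action on $\bG^\star$, respectively the action on unipotent classes of $\bG^F$), and this rigidity forces the permutation of $\mathcal{E}(\bG^F,s)$ induced by $F_p$ to correspond, under a suitable Jordan decomposition, to the permutation of $\mathcal{E}(C_{\bG^\star}(s)^{F^\star},1)$ induced by $F_p^\star$.

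To finish I would invoke the fact that field automorphisms fix every unipotent character. For connected reductive groups this is classical. For the disconnected groups $C_{\bG^\star}(s)$ occurring here — which are of $S(\mathrm{O}_{2k+1}\times\mathrm{O}_{2(n-k)})$-type — it follows from the connected case together with an analysis of the order-$2$ component group of $C_{\bG^\star}(s)$ and of the resulting action of the relative Weyl group $W_{\bG^\star}(s)$ on the series; since $F_p^\star$ fixes the relevant (rational) unipotent classes, it fixes the corresponding unipotent characters and commutes with this $W_{\bG^\star}(s)$-action, so the induced permutation is trivial. The case $s=1$ is the special case in which $\mathcal{E}(\bG^F,1)$ is the set of unipotent characters of $\Sp_{2n}(q)$ itself.

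The main obstacle is the non-canonicity of the Jordan decomposition: one must genuinely produce a decomposition bijection that is compatible with $F_p$, and carry this through the disconnected centralisers $C_{\bG^\star}(s)$ — handling the order-$2$ component group, the induced $W_{\bG^\star}(s)$-action on $\mathcal{E}(\bG^F,s)$, and the distinction (between split and twisted forms of the relevant orthogonal groups) that the $F^\star$-structure on that component group produces. Establishing the equivariance of Lusztig induction and of the generalised Gelfand--Graev representations in a form precise enough to control the automorphism on the Levi, and then tracking that data through the disconnected picture, is where the real difficulty lies; by contrast, the reduction in the first paragraph is routine.
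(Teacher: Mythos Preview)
Your reduction in the first paragraph is fine and matches the paper. After that, however, your route diverges from the paper's and contains a real gap.

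You propose to transport the question through an $F_p$-equivariant Jordan decomposition to unipotent characters of the (possibly disconnected) centraliser, and you assert that the equivariance of Deligne--Lusztig induction and of the GGGRs ``does the work'' of producing such a bijection. This is precisely the step that is not established. For $\bG=\Sp_{2n}$ the centre is disconnected, the Jordan decomposition is not unique, and there is no general mechanism by which the equivariance results you cite manufacture an equivariant Jordan bijection; you acknowledge the obstacle but do not actually remove it. In particular, the GGGRs are not used in the paper to build any kind of Jordan map: they enter through a very specific multiplicity-$1$ statement (that for a cuspidal $\widetilde\chi$ in a quasi-isolated series one has $n_{\widetilde\chi}=|A_{\widetilde\bG}(u)|$ for $u$ in the wave front set), which is a combinatorial computation with symbols and the Springer correspondence that your sketch neither states nor proves. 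Without that input, invoking GGGR-equivariance tells you only that ${}^\sigma\Gamma_u=\Gamma_{\sigma(u)}$, which by itself does not pin down individual irreducible constituents.

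The paper avoids Jordan decomposition entirely. It first proves, via the multiplicity-$1$ result just mentioned, that every \emph{cuspidal} character in a quasi-isolated series is $\sigma$-fixed: one finds a GGGR $\Gamma_u$ in which the cuspidal occurs with multiplicity one, and since $\sigma(u)$ is $\bG^F$-conjugate to $u$ the GGGR is $\sigma$-invariant, forcing the constituent to be fixed. It then decomposes $\mathcal{E}(\bG^F,s)$ into Harish-Chandra series. The cuspidal datum $(I,\lambda)$ at the bottom of each series has $\bL_I^F\cong Z^\circ(\bL_I)^F\times\Sp_{2k}(q)$ and $\lambda=\theta\boxtimes\psi$ with $\theta^2=1$; the symplectic factor $\psi$ is fixed by the cuspidal result and $\theta$ is fixed because $p$ is odd, so $\lambda$ is $\sigma$-fixed. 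An explicit argument shows any extension $\widetilde\lambda$ to $N_{\bG^F}(\bL_I,\lambda)$ is also $\sigma$-fixed, and then Malle--Sp\"ath's equivariant parametrisation of Harish-Chandra series by $\Irr(W_{\bG^F}(\bL_I,\lambda))$, together with the fact that $\sigma$ acts trivially on this relative Weyl group, finishes the proof. The equivariance of Deligne--Lusztig induction that you mention is used elsewhere in the paper (for the reduction to quasi-isolated series in the McKay application), not in the proof of the present theorem.
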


\begin{pa}
Let us now outline our strategy for proving \cref{thm:main-theorem}. The series $\mathcal{E}(\bG^F,s)$ is broken up into a disjoint union of Harish-Chandra series. As part of their proof of the McKay conjecture for odd degree characters \cite{malle-spaeth:2016:characters-of-odd-degree} Malle and Sp\"ath have shown how to describe the action of $\sigma$ on the constituents of a Harish-Chandra series. This involves understanding the effect of $\sigma$ on cuspidal characters and on characters of the relative Weyl group. In the case of symplectic groups the crucial point is to determine the action of $\sigma$ on the cuspidal characters because the action of $\sigma$ on the characters of the relative Weyl group is quite simple to understand.
\end{pa}

\begin{pa}
To understand the action on a cuspidal character $\chi \in \mathcal{E}(\bG^F,s)$ we use Kawanaka's generalised Gelfand--Graev representations (GGGRs). Specifically we choose a regular embedding $\bG \to \widetilde{\bG}$ into a group with connected centre which is defined over $\mathbb{F}_q$. The cuspidal character $\chi$ then occurs in the restriction of a cuspidal character $\widetilde{\chi} \in \Irr(\widetilde{\bG}^F)$. Associated to $\widetilde{\chi}$ we have a corresponding wave front set $\mathcal{O}_{\widetilde{\chi}}^* \subseteq \widetilde{\bG}$, see \cite{kawanaka:1985:GGGRs-and-ennola-duality,lusztig:1992:a-unipotent-support,taylor:2016:GGGRs-small-characteristics}, which is an $F$-stable unipotent conjugacy class of $\widetilde{\bG}$. Moreover to each rational element $u \in \mathcal{O}_{\widetilde{\chi}}^{*F} \subseteq \widetilde{G}$ we have a corresponding GGGR $\Gamma_u^{\widetilde{\bG}^F}$.
\end{pa}

\begin{pa}
It is known that the multiplicity of $\widetilde{\chi}$ in $\Gamma_u^{\widetilde{\bG}^F}$ is small, see \cite[15.4]{taylor:2016:GGGRs-small-characteristics} for a precise statement. The key ingredient to our proof is that there exists an element $u$ for which this multiplicity is $1$. This crucial property allows us to relate the action of $\sigma$ on $\chi$ to the action of $\sigma$ on the GGGRs of $\bG^F$, which is easy to describe. In particular, we deduce that ${}^{\sigma}\chi = \chi$ is always fixed by $\sigma$. This, combined with the Harish-Chandra techniques mentioned above, allows us to deduce the theorem for the whole series $\mathcal{E}(\bG^F,s)$.
\end{pa}

\begin{pa}
To analyse the effect of $\sigma$ on the constituents of any Lusztig series we use the fact that an irreducible character can be obtained as the Deligne--Lusztig induction of an irreducible character from a Levi subgroup of $\bG^F$. It is well known that Deligne--Lusztig induction is equivariant with respect to isogenies, see \cite[13.22]{digne-michel:1991:representations-of-finite-groups-of-lie-type} for instance. In the case of bijective isogenies one can give a shorter proof, such as that given by Bonnaf\'e in \cite[\S2]{navarro-tiep-turull:2008:brauer-characters-with-cyclotomic}. Unfortunately these statements are not sufficient for our purpose as they do not allow us to control the type of the automorphism induced by the isogeny upon restricting to the Levi subgroup. Here we obtain a version of the equivariance, for bijective isogenies, which allows us to explicitly compute the resulting automorphism, see \cref{thm:equivariance-DL-induction} for a precise statement. This result may be of independent interest. Specifically, using \cref{thm:equivariance-DL-induction} we may reduce to the case of looking at irreducible characters of Levi subgroups contained in a Lusztig series labelled by semisimple elements which are quasi-isolated in the dual of the Levi; thus putting us in the setting of \cref{thm:main-theorem}. We apply precisely this process to prove the global part of Sp\"ath's criterion for the inductive McKay condition, see \cref{prop:mckay}.
\end{pa}

\begin{pa}
Let us now make some comments concerning possible generalisations of this result to other series of groups. Our overall strategy is suited to tackling other series, although some parts would arguably be more involved in other cases. For instance, understanding the action of automorphisms on Harish-Chandra series will be more complicated because the structure of Levi subgroups is more complicated. The main problem in generalising to other series is proving the multiplicity $1$ statement for GGGRs and cuspidal characters mentioned above. With this in mind let $\widetilde{\bG}$ be a connected reductive algebraic group with connected centre whose quotient is $\widetilde{\bG}/Z(\widetilde{\bG})$ is simple. When the quotient is of type $\B_n$ or $\D_n$ we have checked that this multiplicity $1$ condition holds for cuspidal characters, covering cuspidal characters in quasi-isolated series, except when the centraliser $C_{\widetilde{G}^{\star}}(s)$ has a twisted type $\A$ factor of large rank. In this case the analogue of \cref{prop:numerical-trick} can fail and so we cannot use this sufficient condition to obtain the multiplicity $1$ statement. We hope to settle this problem in the future.
\end{pa}

\begin{pa}
The structure of the paper is as follows. In \cref{sec:aut-rat-orb} we recall some general statements showing how to describe the action of automorphisms on rational orbits under the action of a connected reductive algebraic group equipped with a Steinberg endomorphism. \Cref{sec:isogenies,sec:duality} recall some fundamental results concerning isogenies and duality. In \crefrange{sec:duality-Lusztig-series}{sec:equivariance-twisted-induction} we describe the basic equivariance statements needed for reducing our study of automorphisms to Lusztig series labelled by quasi-isolated semisimple elements. In \cref{sec:regular-embeddings} we consider regular embeddings. Note that the results from these sections are well known but are not usually phrased in this language. This rephrasing is critical as it facilitates our inductive argument; a similar approach is taken in \cite{lusztig:1984:characters-of-reductive-groups}. The key result from these sections is \cref{thm:equivariance-DL-induction}, which gives the equivariance of Deligne--Lusztig induction. We note that, up to this point, we have tried to state these results in the widest possible generality as we feel they may be useful elsewhere.
\end{pa}

\begin{pa}
In \Cref{sec:GGGRs} we show that GGGRs are equivariant with respect to Frobenius endomorphisms and automorphisms of algebraic groups, see \cref{prop:invariant-GGGRs}. At this point we must assume that $p$ is a good prime to ensure that the GGGRs are defined. The crucial multiplicity $1$ statement for cuspidal characters is proved in \cref{sec:mult-1}, see \cref{thm:cuspidal-fixed}. This uses the combinatorics of the generalised Springer correspondence described in \cref{sec:springer}. We then prove our main result, \cref{thm:main-theorem}, in \cref{sec:quasi-isolated-series}, see \cref{prop:quasi-iso-fixed}. In the final section we prove the global part of Sp\"ath's criterion for the inductive McKay condition.
\end{pa}

\begin{acknowledgments}
We thank Gunter Malle and Britta Sp\"ath for their valuable comments on an earlier version of this article. The author gratefully acknowledges the financial support of an INdAM Marie-Curie Fellowship and grants CPDA125818/12 and 60A01-4222/15 of the University of Padova.
\end{acknowledgments}

\section{Notation}
\begin{assumption}
From this point forward $p$ and $\ell$ will denote distinct prime numbers.
\end{assumption}

\begin{pa}\label{pa:T-Z-sets}
Let us assume $G$ is a group and $X$ is a (left) $G$-set with action map $\cdot : G \times X \to X$. The orbits of $G$ acting on $X$ will be denoted by $X/G$ and if $x \in X$ then we denote by $[x] \subseteq X$ the $G$-orbit containing $x$. For any subset $Y \subseteq X$ we consider the centraliser $C_G(Y) = \{g \in G \mid g\cdot y = y$ for all $y\in Y\}$ and normaliser $N_G(Y) = \{g \in G \mid g\cdot Y = Y\}$ of $Y$ under the action of $G$; here $g\cdot Y = \{g\cdot y \mid y \in Y\}$. Now assume $\phi$ is a map $X \to X$ then we define sets
%%%%
\begin{align*}
T_G(Y,\phi) &= \{g \in G \mid g\cdot\phi(Y) = Y\},\\
Z_G(Y,\phi) &= \{g \in G \mid Y = \phi(g\cdot Y)\}.
\end{align*}
%%%%
The first set is nothing other than the usual transporter, which is either empty or a right coset of $N_G(Y)$ in $G$. If we assume $\phi$ is a bijection then $Z_G(Y,\phi)$ is either empty or a left coset of $N_G(Y)$ in $G$. If $H \leqslant N_G(Y)$ is a subgroup of the normaliser then we denote by $H\backslash T_G(Y,\phi)$, resp., $Z_G(Y,\phi)/H$, the right, resp., left, cosets of $H$ in $G$ contained in $T_G(Y,\phi)$, resp., $Z_G(Y,\phi)$. For $x \in X$ we set $C_G(x) = C_G(\{x\})$, $N_G(x) = N_G(\{x\})$, $T_G(x,\phi) = T_G(\{x\},\phi)$, and $Z_G(x,\phi) = Z_G(\{x\},\phi)$.
\end{pa}

\begin{pa}
If $\mathcal{P}(G)$ is the power set of $G$ then we have an action of $G$ on $\mathcal{P}(G)$ by setting ${}^gS = gSg^{-1}$ for any $g \in G$ and $S \in \mathcal{P}(G)$. If $Y \in \mathcal{P}(G)$ is a subset then $C_G(Y)$, $N_G(Y)$, $T_G(Y,\phi)$, and $Z_G(Y,\phi)$ are defined with respect to this action unless otherwise specified. We will also write $S^g$ for $g^{-1}Sg$. If $H \leqslant G$ is a subgroup then we will denote by $W_G(H)$ the section $N_G(H)/H$.
\end{pa}

\begin{pa}
Ignoring any additional structure we will denote by $\Aut(G)$ the automorphism group of $G$ in the category of groups. Moreover, if $g \in G$ is an element then we will denote by $\imath_g \in \Aut(G)$ the inner automorphism defined by $\imath_g(x) = gxg^{-1}$ for all $x \in G$; this defines a homomorphism $G \to \Aut(G)$. If $\phi : G \to G$ is a homomorphism then we will denote by $\Aut(G,\phi) = C_{\Aut(G)}(\phi) \leqslant \Aut(G)$ the subgroup of automorphisms commuting with $\phi$. Note that if $G^{\phi} = \{g \in G \mid \phi(g) = g\}$ is the fixed point subgroup then the natural inclusion $G^{\phi} \to G$ induces a homomorphism $\Aut(G,\phi) \to \Aut(G^{\phi})$.
\end{pa}

\begin{pa}
We assume chosen an algebraic closure $\Ql$ of the $\ell$-adic field $\mathbb{Q}_{\ell}$. Let us assume now that $G$ is a finite group then we will denote by $\Irr(G)$ the set of ordinary irreducible characters $\chi : G \to \Ql$. If $\phi : G \to H$ is an isomorphism of finite groups then the map $\Irr(G) \to \Irr(H)$ defined by $\chi \mapsto {}^{\phi}\chi := \chi \circ \phi^{-1}$ is a bijection. If $M$ is a right, resp., left, $\Ql G$-module then we denote by ${}^\phi M$, resp., $M^{\phi}$, the vector space $M$ viewed as a $\Ql H$-module by letting $h \in H$ act as $\phi^{-1}(h)$ on $M$. Note that if $M$ affords the character $\chi$ then ${}^{\phi}M$ affords the character ${}^{\phi}\chi$. If $\phi$ is a conjugation map $\imath_g$ then we will simply write ${}^g\chi$, resp., ${}^gM$, $M^g$, for ${}^{\phi}\chi$, resp., ${}^{\phi}M$, $M^{\phi}$.
\end{pa}

\begin{pa}
For any $g \in G$ or $h \in H$ we will denote by $\phi g : G \to H$ and $h\phi : G \to H$ the isomorphism $\phi\imath_g : G \to H$ and $\imath_h\phi : G \to H$ respectively. This is consistent with the notation above. This may clearly lead to confusion over what $\phi g(x)$ and $h\phi(x)$ stand for when $x \in G$. To avoid confusion we will always write ${}^h\phi(x)$ or $\phi({}^gx)$ for $\imath_h\phi(x)$ and $\phi\imath_g(x)$ when we apply this map to elements.
\end{pa}

\begin{pa}\label{pa:stab-notation}
If $H = G$ in the above construction then the map $\Irr(G) \to \Irr(G)$ defined by $\chi \mapsto {}^{\phi}\chi$ defines a left action of $\Aut(G)$ on $\Irr(G)$. If $\widetilde{G}$ is a group equipped with a homomorphism $\varphi : \widetilde{G} \to \Aut(G)$ then we define an action of $\widetilde{G}$ on $\Irr(G)$ via the induced action from $\Aut(G)$. Moreover, we denote by $\widetilde{G}_{\chi} \leqslant \widetilde{G}$ the stabiliser of $\chi$ under this action.
\end{pa}

\section{Automorphisms and Rational Orbits}\label{sec:aut-rat-orb}
\begin{assumption}
From this point forward $\mathbb{K} = \overline{\mathbb{F}}_p$ will denote a fixed algebraic closure of the finite field $\mathbb{F}_p$ of cardinality $p$. Moreover, we will write $\mathbb{G}_m$, resp., $\mathbb{G}_a$, for the set $\mathbb{K}\setminus \{0\}$, resp., $\mathbb{K}$, viewed as a multiplicative, resp., additive, algebraic group.
\end{assumption}

\begin{pa}\label{pa:setup}
Let $\bG$ be a connected algebraic $\mathbb{K}$-group. We will assume that $\bG$ is endowed with a Steinberg endomorphism $F$, i.e., an endomorphism for which some power of $F$ is a Frobenius endomorphism. We now assume that $\mathcal{O}$ is a set on which $\bG$ acts transitively. Moreover, we assume that $\mathcal{O}$ is equipped with a map $F' : \mathcal{O} \to \mathcal{O}$ such that the following hold:
%%%%
\begin{enumerate}[label=(F\arabic*)]
	\item $F'(g\cdot x) = F(g)\cdot F'(x)$ for all $g \in \bG$ and $x \in \mathcal{O}$
	\item the stabiliser $\Stab_{\bG}(x) \leqslant \bG$ of any point $x \in \mathcal{O}$ is a closed subgroup.
\end{enumerate}
%%%%
Note, this is the setup of \cite[4.3.1]{geck:2003:intro-to-algebraic-geometry}. With these assumptions the finite group $\bG^F$ acts on the set of fixed points $\mathcal{O}^{F'}$.
\end{pa}

\begin{pa}\label{pa:rational-orbits}
We wish to now recall a parameterisation of the orbits $\mathcal{O}^{F'}/\bG^F$. With this in mind we denote by $A_{\bG}(x)$ the finite component group $\Stab_{\bG}(x)/\Stab_{\bG}^{\circ}(x)$ of the stabiliser of $x \in \mathcal{O}$. We will write $\overline{\phantom{x}} : \Stab_{\bG}(x) \to A_{\bG}(x)$ for the natural projection map. As $\bG$ acts transitively on $\mathcal{O}$ we have the set $T_{\bG}(x_0,F') \neq \emptyset$ for any $x_0 \in \mathcal{O}$ and moreover we have an action of $\Stab_{\bG}(x_0)$ on $T_{\bG}(x_0,F')$ given by $a\cdot z = a^{-1}zF(a)$. This induces an action of $A_{\bG}(x_0)$ on $A_{\bG}(x_0,F') = \Stab_{\bG}^{\circ}(x_0)\backslash T_{\bG}(x_0,F')$ because for any $a \in \Stab_{\bG}^{\circ}(x_0)$ and $\Stab_{\bG}^{\circ}(x_0)z \in A_{\bG}(x_0,F')$ we have ${}^zF(a) \in \Stab_{\bG}^{\circ}(x_0)$. We will denote the orbits of this action by $H^1(F,A_{\bG}(x_0,F'))$. Now, if $g\cdot x_0 \in \mathcal{O}^{F'}$ is an $F'$-fixed point then $g^{-1}F(g) \in T_{\bG}(x_0,F')$ and so we have a well defined map $\mathcal{O}^{F'} \to A_{\bG}(x_0,F')$ given by $g\cdot x_0 \mapsto \overline{g^{-1}F(g)} := \Stab_{\bG}^{\circ}(x_0)g^{-1}F(g)$. With this we have the following rephrasing of a classical result, see \cite[4.3.5]{geck:2003:intro-to-algebraic-geometry} for instance.
\end{pa}

\begin{prop}\label{pa:prop-rat-orb-param}%\label{pa:prop-rat-orb-param-not-fixed}
Assume $x_0 \in \mathcal{O}$ then the map $\mathcal{O}^{F'} \to A_{\bG}(x_0,F')$ given by $g\cdot x_0 \mapsto \overline{g^{-1}F(g)}$ induces a bijection $\mathcal{O}^{F'}/\bG^F \to H^1(F,A_{\bG}(x_0,F'))$.
\end{prop}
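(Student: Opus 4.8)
The plan is to reduce everything to the Lang--Steinberg theorem for connected groups. First I would check that the rule in the statement descends to $\mathcal{O}^{F'}/\bG^F$. By the discussion in \cref{pa:rational-orbits} a point $g\cdot x_0 \in \mathcal{O}^{F'}$ has $g^{-1}F(g) \in T_{\bG}(x_0,F')$, so $\overline{g^{-1}F(g)} \in A_{\bG}(x_0,F')$ makes sense; replacing $g$ by $ga$ with $a \in \Stab_{\bG}(x_0)$ replaces $g^{-1}F(g)$ by $a^{-1}(g^{-1}F(g))F(a)$, a point in the same $A_{\bG}(x_0)$-orbit, while replacing $g$ by $kg$ with $k \in \bG^F$ leaves $g^{-1}F(g)$ unchanged. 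Hence the composite of $g\cdot x_0 \mapsto \overline{g^{-1}F(g)}$ with the projection to $H^1(F,A_{\bG}(x_0,F'))$ is well defined on $\mathcal{O}^{F'}$ and constant on $\bG^F$-orbits, so it induces a map $\mathcal{O}^{F'}/\bG^F \to H^1(F,A_{\bG}(x_0,F'))$; it remains to prove this is a bijection.

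For surjectivity I would take $z \in T_{\bG}(x_0,F')$ representing a prescribed class. As $\bG$ is connected the Lang map $g \mapsto g^{-1}F(g)$ is surjective, so we may choose $g$ with $g^{-1}F(g) = z$; then (F1) together with $z\cdot F'(x_0) = x_0$ gives $F'(g\cdot x_0) = F(g)\cdot F'(x_0) = g\cdot(z\cdot F'(x_0)) = g\cdot x_0$, so $g\cdot x_0 \in \mathcal{O}^{F'}$ is a preimage of that class.

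The substance of the proof is injectivity. Suppose $y_i = g_i\cdot x_0 \in \mathcal{O}^{F'}$ for $i = 1,2$ have images lying in the same $A_{\bG}(x_0)$-orbit. Multiplying $g_1$ on the right by a suitable element of $\Stab_{\bG}(x_0)$ — which does not move $y_1$ — I may assume $g_2^{-1}F(g_2) = c\cdot g_1^{-1}F(g_1)$ for some $c \in \Stab_{\bG}^{\circ}(x_0)$. Setting $m = g_2g_1^{-1}$ and $c' = {}^{g_1}c$, a short computation gives $m\cdot y_1 = y_2$ and $F(m) = mc'$, with $c' \in {}^{g_1}\Stab_{\bG}^{\circ}(x_0) = \Stab_{\bG}^{\circ}(y_1)$. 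Now $\Stab_{\bG}(y_1)$ is closed by (F2) and $F$-stable, since (F1) and $F'(y_1) = y_1$ yield $F(\Stab_{\bG}(y_1)) \subseteq \Stab_{\bG}(F'(y_1)) = \Stab_{\bG}(y_1)$; hence $\Stab_{\bG}^{\circ}(y_1)$ is a connected closed subgroup on which $F$ restricts to a Steinberg endomorphism. Applying Lang--Steinberg to this subgroup, in the form that $r \mapsto rF(r)^{-1}$ is surjective, I obtain $r \in \Stab_{\bG}^{\circ}(y_1)$ with $rF(r)^{-1} = c'$; then $k := mr$ satisfies $F(k) = F(m)F(r) = mc'c'^{-1}r = mr = k$ and $k\cdot y_1 = m\cdot y_1 = y_2$, so $y_1$ and $y_2$ are $\bG^F$-conjugate.

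I expect the one point requiring care to be exactly this passage from Lang--Steinberg for $\bG$ to Lang--Steinberg for the connected stabiliser $\Stab_{\bG}^{\circ}(y_1)$ of the $F'$-fixed point $y_1$: one needs the standard facts that a closed subgroup of $\bG$ stable under a Steinberg endomorphism carries an induced Steinberg endomorphism, and that its identity component does too. Everything else is bookkeeping with the transporter sets of \cref{pa:T-Z-sets} and the $\Stab_{\bG}(x_0)$-action introduced in \cref{pa:rational-orbits}.
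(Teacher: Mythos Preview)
Your argument is correct, but it differs from the paper's. The paper does not argue directly: it first chooses $z \in T_{\bG}(x_0,F')$ and $g_0$ with $g_0^{-1}F(g_0) = z$, so that $x_0' = g_0\cdot x_0$ is genuinely $F'$-fixed, and then composes the map in question with explicit bijections $\imath_{g_0} : H^1(zF,A_{\bG}(x_0)) \to H^1(F,A_{\bG}(x_0'))$ and $\tau_z : H^1(zF,A_{\bG}(x_0)) \to H^1(F,A_{\bG}(x_0,F'))$ to reduce to the classical situation of an $F'$-fixed base point, where it simply cites \cite[4.3.5]{geck:2003:intro-to-algebraic-geometry}. Your proof, by contrast, is self-contained: you establish well-definedness, get surjectivity from Lang--Steinberg on $\bG$, and for injectivity you pass not to a new base point but to the connected stabiliser $\Stab_{\bG}^{\circ}(y_1)$ of one of the two $F'$-fixed elements in play and apply Lang--Steinberg there. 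In effect you are reproving the content of the cited textbook result in the slightly more general setting where $x_0$ itself need not be $F'$-fixed. The paper's route is shorter given the citation and makes the comparison with the standard parametrisation explicit, which is convenient for the later compatibility arguments in \cref{lem:perm-rat-orbits}; your route avoids the auxiliary base point and the chain of identifications, at the cost of redoing the Lang--Steinberg bookkeeping inside the stabiliser.
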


\begin{proof}
Fix $z \in T_{\bG}(x_0,F')$ and let $g_0 \in \bG$ be such that $g_0^{-1}F(g_0) = z$ then $x_0' = g_0\cdot x_0 \in \mathcal{O}^{F'}$ is $F'$-fixed. It's clear that we have an isomorphism $\imath_{g_0} : A_{\bG}(x_0) \to A_{\bG}(x_0')$ and $F \circ \imath_{g_0} = \imath_{g_0}\circ zF$ so we have a bijection $\imath_{g_0} : H^1(zF,A_{\bG}(x_0)) \to H^1(F,A_{\bG}(x_0'))$. It is easily checked that the bijection $\tau_z : A_{\bG}(x_0) \to A_{\bG}(x_0,F')$ defined by $\tau_z(x) = xz$ induces a bijection $\tau_z : H^1(zF,A_{\bG}(x_0)) \to H^1(F,A_{\bG}(x_0,F'))$. Now consider the sequence of maps
%%%%
\begin{equation*}
\begin{tikzcd}[row sep=tiny]
\mathcal{O}^{F'}/\bG^F \arrow{r} & \mathcal{O}^{F'}/\bG^F \arrow{r} & H^1(F,A_{\bG}(x_0')) \arrow{r}{\imath_{g_0}^{-1}} & H^1(zF,A_{\bG}(x_0)) \arrow{r}{\tau_z} &  H^1(F,A_{\bG}(x_0,F'))\\
g\cdot x_0  \arrow{r} & gg_0^{-1}\cdot x_0'  \arrow{r} & \overline{g_0g^{-1}F(gg_0^{-1})}  \arrow{r} & \overline{g^{-1}F(g)F(g_0^{-1})g_0} \arrow{r} & \overline{g^{-1}F(g)}
\end{tikzcd}
\end{equation*}
%%%%
By \cite[4.3.5]{geck:2003:intro-to-algebraic-geometry} the second map is a bijection so we're done.
\end{proof}

\begin{pa}
We will now, additionally, assume that $\phi \in \Aut(\bG,F)$ is a bijective homomorphism of algebraic groups and $\phi' : \mathcal{O} \to \mathcal{O}$ is a bijection such that the following holds:
%%%%
\begin{enumerate}[label=($\phi$\arabic*)]
	\item $\phi'(g\cdot x) = \phi(g)\cdot\phi'(x)$ for all $g \in \bG$ and $x \in \mathcal{O}$,
	\item $\phi' \circ F' = F'\circ\phi'$.
\end{enumerate}
%%%%
With these assumptions it is clear that the map $\phi'$ permutes the set $\mathcal{O}^{F'}/\bG^F$ of rational orbits. The following shows how one can understand the action of $\phi'$ on these orbits; the proof is easy and is left as an exercise for the reader.
\end{pa}

\begin{lem}\label{lem:perm-rat-orbits}
Assume $g \in \bG$ is such that $\phi'(x_0) = g\cdot x_0$ and set $z = g^{-1}F(g) \in \Stab_{\bG}(x_0)$. If $\psi : A_{\bG}(x_0,F') \to A_{\bG}(x_0,F')$ is the map induced by $h \mapsto g^{-1}\phi(h)gz$ for all $h \in T_{\bG}(x_0,F')$ then the following diagram is commutative
%%%%
\begin{equation*}
\begin{tikzcd}
\mathcal{O}^{F'}/G \arrow{d}{}\arrow{r}{\phi'} & \mathcal{O}^{F'}/G \arrow{d}{}\\
H^1(F,A_{\bG}(x_0,F')) \arrow{r}{\psi} & H^1(F,A_{\bG}(x_0,F'))
\end{tikzcd}
\end{equation*}
%%%%
Here the vertical arrows are given by the bijection in \cref{pa:prop-rat-orb-param}. In particular, if $x_0$ is $\phi'$-fixed then the bijection in \cref{pa:prop-rat-orb-param} is equivariant with respect to $\phi'$ and $\phi$.
\end{lem}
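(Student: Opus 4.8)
The plan is to trace both composites around the square on a single representative and observe that they produce the same cohomology class; the bulk of the work lies in first checking that $\psi$ is a well-defined bijection of $H^1(F,A_{\bG}(x_0,F'))$. Since $\phi$ is a bijective homomorphism with $\phi'(x_0)=g\cdot x_0$, property ($\phi$1) gives $\phi(\Stab_{\bG}(x_0))=\Stab_{\bG}(\phi'(x_0))={}^{g}\Stab_{\bG}(x_0)$, so $h\mapsto g^{-1}\phi(h)g$ is an automorphism of $\Stab_{\bG}(x_0)$ preserving its identity component (the asserted membership $z\in\Stab_{\bG}(x_0)$ likewise follows: apply $F'$ to $\phi'(x_0)=g\cdot x_0$ and use ($\phi$2), (F1) and that $x_0$ is $F'$-fixed to get $F(g)\cdot x_0=g\cdot x_0$). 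Since $gz=F(g)$, the map defining $\psi$ is $h\mapsto g^{-1}\phi(h)F(g)$, and for $h\in T_{\bG}(x_0,F')$ one has $F(g)\cdot F'(x_0)=F'(g\cdot x_0)=\phi'(F'(x_0))$ by (F1) and ($\phi$2), hence $g^{-1}\phi(h)F(g)\cdot F'(x_0)=g^{-1}\cdot\phi'\bigl(h\cdot F'(x_0)\bigr)=g^{-1}\cdot\phi'(x_0)=x_0$ by ($\phi$1); combined with the previous point this shows $\psi$ induces a bijection $A_{\bG}(x_0,F')\to A_{\bG}(x_0,F')$. Finally, a short manipulation of $F$-twisted conjugation using $F\circ\phi=\phi\circ F$ shows that $\psi$ carries the class of $a^{-1}hF(a)$ to that of $b^{-1}\bigl(g^{-1}\phi(h)F(g)\bigr)F(b)$ with $b=g^{-1}\phi(a)g$; as $a\mapsto b$ is a bijection of $A_{\bG}(x_0)$, it follows that $\psi$ maps $A_{\bG}(x_0)$-orbits to $A_{\bG}(x_0)$-orbits and hence descends to a bijection of $H^1(F,A_{\bG}(x_0,F'))$.

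Granting this, the square is immediate. Represent a class in $\mathcal{O}^{F'}/\bG^F$ by $y=g_1\cdot x_0$, so that $h:=g_1^{-1}F(g_1)\in T_{\bG}(x_0,F')$ and, by \cref{pa:prop-rat-orb-param}, the class maps downwards to $\overline{h}$; thus going down then across yields $\psi(\overline{h})=\overline{g^{-1}\phi(h)F(g)}$. Going across first, ($\phi$1) gives $\phi'(y)=\phi(g_1)g\cdot x_0$, whose class maps downwards to $\overline{(\phi(g_1)g)^{-1}F(\phi(g_1)g)}$; expanding this and using $F\circ\phi=\phi\circ F$ gives $\overline{g^{-1}\phi\bigl(g_1^{-1}F(g_1)\bigr)F(g)}=\overline{g^{-1}\phi(h)F(g)}$, so the two routes agree and the diagram commutes. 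For the final assertion, if $x_0$ is $\phi'$-fixed we may take $g=1$, whence $z=1$ and $\psi$ is induced by $h\mapsto\phi(h)$; since $\phi$ then stabilises $\Stab_{\bG}(x_0)$ and $T_{\bG}(x_0,F')$ and commutes with $F$, this $\psi$ is precisely the natural action of $\phi$ on $H^1(F,A_{\bG}(x_0,F'))$, and the commutativity of the square is the stated equivariance.

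I expect the only mildly delicate step to be the last part of the first paragraph, namely confirming that $h\mapsto g^{-1}\phi(h)F(g)$ intertwines the $A_{\bG}(x_0)$-actions on source and target; this requires keeping careful track of how $F$ interacts with conjugation by $g$ and with the twist by $\phi$. Everything else is a routine unwinding of the defining formulas, which is presumably why the author leaves the proof to the reader.
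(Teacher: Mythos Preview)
Your proof is correct and is exactly the direct verification the author has in mind; since the paper omits the proof entirely (``left as an exercise''), there is nothing to compare against beyond confirming that the obvious diagram chase works, and yours does.

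One small slip worth flagging: in the parenthetical where you verify $z\in\Stab_{\bG}(x_0)$, you invoke ``that $x_0$ is $F'$-fixed,'' but this is \emph{not} among the hypotheses of the lemma (cf.\ \cref{pa:rational-orbits,pa:prop-rat-orb-param}, where $x_0\in\mathcal{O}$ is arbitrary). Fortunately this does no damage: the membership $z\in\Stab_{\bG}(x_0)$ is already asserted in the lemma's statement, and more to the point your argument never uses it---you immediately rewrite $gz=F(g)$ and work with $h\mapsto g^{-1}\phi(h)F(g)$, whose landing in $T_{\bG}(x_0,F')$ you verify without any assumption on $x_0$. So the commutativity check and the descent to $H^1$ go through as written; just drop the parenthetical or rephrase it as ``if in addition $x_0\in\mathcal{O}^{F'}$ then\ldots''.
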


%\begin{proof}
%Assume $a \cdot x_0 \in \mathcal{O}^{F'}$ then $a^{-1}F(a) \in \Stab_{\bG}(x_0)$. Moreover, we have $\phi'(a\cdot x_0) = \phi(a) \cdot \phi'(x_0) = \phi(a)g\cdot x_0$ and
%%%%%
%\begin{equation*}
%\overline{g^{-1}\phi(a)^{-1}F(\phi(a)g)} = \overline{g^{-1}\phi(a^{-1}F(a))F(g)} = \overline{g^{-1}\phi(a^{-1}F(a))gz}.
%\end{equation*}
%%%%%
%The statement follows.
%\end{proof}

\begin{rem}
We note that even if $\phi'$ stabilises the rational orbit containing $x_0$ it is not necessarily true that $\phi'$ fixes an element in that orbit.
\end{rem}

\section{Isogenies}\label{sec:isogenies}
%\subsection{Root Data}
\begin{pa}
Let $(X,\Phi,\widecheck{X},\widecheck{\Phi})$ be a root datum, c.f., \cite[7.4.1]{springer:2009:linear-algebraic-groups}. In particular, we have $X$ and $\widecheck{X}$ are free $\mathbb{Z}$-modules equipped with a perfect pairing $\langle -,-\rangle : X \times \widecheck{X} \to \mathbb{Z}$. Moreover, $\Phi \subseteq X$, resp., $\widecheck{\Phi} \subseteq \widecheck{X}$, is a root system in the real vector space $X_{\mathbb{R}} := X \otimes_{\mathbb{Z}} \mathbb{R}$, resp., $\widecheck{X}_{\mathbb{R}} := \widecheck{X} \otimes_{\mathbb{Z}} \mathbb{R}$, and we have a bijection $\widecheck{\phantom{\alpha}} : \Phi \to \widecheck{\Phi}$ such that $\langle \alpha,\widecheck{\alpha}\rangle = 2$ for all $\alpha \in \Phi$. Associated to each root $\alpha \in \Phi$ we have a corresponding reflection $s_{\alpha} \in \GL(X_{\mathbb{R}})$ and the subgroup $W \leqslant \GL(X_{\mathbb{R}})$ generated by these reflections is the Weyl group of the root datum. The sextuple $\mathcal{R} = (X,\Phi,\Delta,\widecheck{X},\widecheck{\Phi},\widecheck{\Delta})$ is called a \emph{based root datum} if $\Delta \subseteq \Phi$ is a simple system of roots and $\widecheck{\Delta} \subseteq \widecheck{\Phi}$ is the image of $\Delta$ under the bijection $\Phi \to \widecheck{\Phi}$; note $\widecheck{\Delta}$ is then a simple system of roots in $\widecheck{\Phi}$.
\end{pa}

\begin{pa}\label{pa:iso-def}
Assume now that $\mathcal{R} = (X,\Phi,\Delta,\widecheck{X},\widecheck{\Phi},\widecheck{\Delta})$ and $\mathcal{R}' = (X',\Phi',\Delta',\widecheck{X}',\widecheck{\Phi}',\widecheck{\Delta}')$ are based root data. Recall that if $\varphi \in \Hom(X',X)$ is a $\mathbb{Z}$-module homomorphism then there is a corresponding homomorphism $\widecheck{\varphi} \in \Hom(\widecheck{X},\widecheck{X}')$ called the dual of $\varphi$. Specifically we have $\widecheck{\varphi}$ is the unique homomorphism satisfying $\langle \varphi(x),y\rangle = \langle x,\widecheck{\varphi}(y)\rangle$ for all $x \in X'$ and $y \in \widecheck{X}$. We note that the map $\widecheck{\phantom{\varphi}} : \Hom(X',X) \to \Hom(\widecheck{X},\widecheck{X}')$ is bijective and contravariant. With this we have a $p$-isogeny $\varphi : \mathcal{R}' \to \mathcal{R}$ of the based root data is a homomorphism $\varphi \in \Hom(X',X)$ such that the following hold:
%%%%
\begin{enumerate}
	\item $\varphi$ and $\widecheck{\varphi}$ are injective
	\item there exists a bijection $b : \Delta \to \Delta'$ and a map $q : \Delta \to \{p^a \mid a \in \mathbb{Z}_{\geqslant 0}\}$ such that for any $\alpha \in \Delta$ we have $\varphi(\beta) = q(\alpha)\alpha$ and $\widecheck{\varphi}(\widecheck{\alpha}) = q(\alpha)\widecheck{\beta}$ where $\beta = b(\alpha)$.
\end{enumerate}
%%%%
We will denote by $\Iso_p(\mathcal{R}',\mathcal{R})$ the set of all $p$-isogenies $\varphi : \mathcal{R}' \to \mathcal{R}$. Moreover, we set $\Iso_p(\mathcal{R}) = \Iso_p(\mathcal{R},\mathcal{R})$. The following is an easy consequence of the definition.
\end{pa}

\begin{lem}\label{lem:dual-iso}
The map $\widecheck{\phantom{\varphi}} : \Hom(X',X) \to \Hom(\widecheck{X},\widecheck{X}')$ induces a bijection $\Iso_p(\mathcal{R}',\mathcal{R}) \to \Iso_p(\widecheck{\mathcal{R}},\widecheck{\mathcal{R}}')$. %Moreover, if $\mathcal{R} = \mathcal{R}' = (X,\Delta,\widecheck{X},\widecheck{\Delta})$ then $\widecheck{\phantom{\varphi}}$ restricts to an anti-isomorphism $\GL(X) \to \GL(\widecheck{X})$.
\end{lem}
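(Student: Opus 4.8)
The plan is to verify directly that the contravariant bijection $\widecheck{\phantom{\varphi}} : \Hom(X',X) \to \Hom(\widecheck{X},\widecheck{X}')$ carries $p$-isogenies to $p$-isogenies, and that its inverse does the same, so that it restricts to the asserted bijection $\Iso_p(\mathcal{R}',\mathcal{R}) \to \Iso_p(\widecheck{\mathcal{R}},\widecheck{\mathcal{R}}')$. Since the map on the full $\Hom$-groups is already known to be a bijection, the only thing to check is that $\varphi$ satisfies conditions (1) and (2) of \cref{pa:iso-def} if and only if $\widecheck{\varphi}$ does (as a homomorphism $\widecheck{X} \to \widecheck{X}'$, i.e. as a candidate $p$-isogeny $\widecheck{\mathcal{R}} \to \widecheck{\mathcal{R}}'$, noting that the dual of the based root datum $\mathcal{R} = (X,\Phi,\Delta,\widecheck{X},\widecheck{\Phi},\widecheck{\Delta})$ is $\widecheck{\mathcal{R}} = (\widecheck{X},\widecheck{\Phi},\widecheck{\Delta},X,\Phi,\Delta)$).

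First I would dispose of condition (1): for $\varphi \in \Hom(X',X)$, the map $\widecheck{\varphi}$ is injective if and only if $\varphi$ has finite cokernel, equivalently $\varphi \otimes \mathbb{R}$ is surjective; but since $X$ and $X'$ have the same rank (they do, once either $\varphi$ or $\widecheck\varphi$ is injective, as $\Iso_p$ is nonempty only in that case — and in any event (1) for $\varphi$ forces equal ranks), injectivity of $\varphi$ is equivalent to injectivity of $\widecheck{\varphi}$ is equivalent to both being injective. So condition (1) for $\varphi$ as a map $\mathcal{R}' \to \mathcal{R}$ is visibly symmetric under $\varphi \mapsto \widecheck\varphi$, and matches condition (1) for $\widecheck\varphi$ as a map $\widecheck{\mathcal{R}} \to \widecheck{\mathcal{R}}'$.

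The substance is condition (2). Suppose $\varphi$ is a $p$-isogeny, with bijection $b : \Delta \to \Delta'$ and function $q : \Delta \to \{p^a\}$ as in (2), so that $\varphi(\widecheck\beta\hspace{0pt})$... more carefully: $\varphi(b(\alpha)) $ is not what is written; rather for $\alpha \in \Delta$ with $\beta = b(\alpha) \in \Delta'$ we have $\varphi(\beta) = q(\alpha)\alpha$ and $\widecheck{\varphi}(\widecheck{\alpha}) = q(\alpha)\widecheck{\beta}$. Now I claim $\widecheck\varphi : \widecheck X \to \widecheck X'$ is a $p$-isogeny $\widecheck{\mathcal{R}} \to \widecheck{\mathcal{R}}'$ with simple system $\widecheck\Delta \subseteq \widecheck\Phi$ for $\widecheck{\mathcal{R}}$ and $\widecheck\Delta' \subseteq \widecheck\Phi'$ for $\widecheck{\mathcal{R}}'$: take the bijection $\widecheck b : \widecheck\Delta \to \widecheck\Delta'$ defined by $\widecheck b(\widecheck\alpha) = \widecheck{b(\alpha)} = \widecheck\beta$, and the same function read through the identification $\widecheck\alpha \leftrightarrow \alpha$, i.e. $\widecheck q(\widecheck\alpha) = q(\alpha)$. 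Then the second half of (2) for $\varphi$, namely $\widecheck\varphi(\widecheck\alpha) = q(\alpha)\widecheck\beta = \widecheck q(\widecheck\alpha)\,\widecheck b(\widecheck\alpha)$, is exactly the first half of (2) for $\widecheck\varphi$; and the dual statement $\widecheck{(\widecheck\varphi)}(\beta) = q(\alpha)\alpha$ — using $\widecheck{(\widecheck\varphi)} = \varphi$ under the canonical identification $\widecheck{\widecheck X} = X$ — is precisely $\varphi(\beta) = q(\alpha)\alpha$, which is the first half of (2) for $\varphi$, i.e. the second half of (2) for $\widecheck\varphi$. So $\widecheck\varphi \in \Iso_p(\widecheck{\mathcal{R}},\widecheck{\mathcal{R}}')$. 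Applying the same argument to $\widecheck\varphi$ and using that $\widecheck{\phantom{\varphi}}$ is an involution (up to the canonical double-dual identification) gives that $\widecheck{\phantom{\varphi}}$ and its inverse both preserve $p$-isogenies, whence the claimed bijection.

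The main (mild) obstacle is purely bookkeeping: one must be careful that the dual of a \emph{based} root datum is formed by swapping the two lattices \emph{and} the two simple systems, so that "first half" and "second half" of condition (2) genuinely swap roles under dualisation; and one must invoke the standard fact that $\alpha \mapsto \widecheck\alpha$ intertwines the simple systems, $\widecheck\Delta$ being by definition the image of $\Delta$. There is no real inequality or estimate to prove — everything reduces to the contravariant functoriality of $\widecheck{\phantom{\varphi}}$ together with symmetry of the defining conditions, so the proof is short. $\qed$
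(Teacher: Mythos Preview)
Your argument is correct and is exactly what the paper intends: it states the lemma as ``an easy consequence of the definition'' and gives no proof, so the expected argument is precisely your direct verification that conditions (a) and (b) of \cref{pa:iso-def} are exchanged under $\varphi \mapsto \widecheck{\varphi}$, together with $\widecheck{\widecheck{\varphi}} = \varphi$.

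Two cosmetic remarks. First, your treatment of condition (a) is more elaborate than necessary: condition (a) for $\varphi$ says $\varphi$ and $\widecheck{\varphi}$ are injective, while condition (a) for $\widecheck{\varphi}$ says $\widecheck{\varphi}$ and $\widecheck{\widecheck{\varphi}}=\varphi$ are injective; these are literally the same statement, so no detour through cokernels or ranks is needed. Second, for $\widecheck{\varphi}$ viewed as a $p$-isogeny $\widecheck{\mathcal{R}} \to \widecheck{\mathcal{R}}'$, the bijection in condition (b) should, by the conventions of \cref{pa:iso-def}, go $\widecheck{\Delta}' \to \widecheck{\Delta}$ and the function $q$ should be defined on $\widecheck{\Delta}'$; your $\widecheck{b}$ and $\widecheck{q}$ go the other way. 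This is harmless since $\widecheck{b}$ is a bijection --- one simply takes $\widecheck{b}^{-1}$ and $\widecheck{q}\circ\widecheck{b}^{-1}$ --- and your verification of the equations is correct after this relabelling.
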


\begin{pa}\label{pa:root-datum-alg-grp}
Now assume $\bG$ is a connected reductive algebraic $\mathbb{K}$-group with Borel subgroup $\bB_0 \leqslant \bG$ and maximal torus $\bT_0 \leqslant \bB_0$. We denote by $\mathcal{G}$ the triple $(\bG,\bB_0,\bT_0)$. With respect to $\bT_0$ we have the root datum $(X(\bT_0),\Phi,\widecheck{X}(\bT_0),\widecheck{\Phi})$ of $\bG$. Here $X(\bT_0) = \Hom(\bT_0,\mathbb{G}_m)$ and $\widecheck{X}(\bT_0) = \Hom(\mathbb{G}_m,\bT_0)$ are, respectively, the character and cocharacter groups of $\bT_0$ equipped with the usual perfect pairing defined by $x(y(k)) = k^{\langle x,y\rangle}$ for all $k \in \mathbb{G}_m$, $x \in X(\bT_0)$, and $y \in \widecheck{X}(\bT_0)$. If $\Phi \subseteq X$ are the roots of $\bG$ with respect to $\bT_0$ then associated to each $\alpha \in \Phi$ we have a corresponding $1$-dimensional root subgroup $\bX_{\alpha} \leqslant \bG$. The set of all roots $\alpha \in \Phi$ such that $\bX_{\alpha} \leqslant \bB_0$ is a positive system of roots which contains a unique simple system of roots, say $\Delta$. We then have $\mathcal{R}(\mathcal{G}) = (X(\bT_0),\Phi,\Delta,\widecheck{X}(\bT_0),\widecheck{\Phi},\widecheck{\Delta})$ is a based root datum determined by the triple $\mathcal{G}$.
\end{pa}

\begin{pa}\label{pa:group-isogeny}
Let $\mathcal{G} = (\bG,\bB_0,\bT_0)$ and $\mathcal{G}' = (\bG',\bB_0',\bT_0')$ be triples, as in \cref{pa:root-datum-alg-grp}, with corresponding based root data $\mathcal{R} = \mathcal{R}(\mathcal{G})$ and $\mathcal{R}' = \mathcal{R}(\mathcal{G}')$. Recall that an isogeny $\phi : \bG \to \bG'$ between algebraic groups is a surjective homomorphism of algebraic groups with finite kernel. We will denote by $\Iso(\mathcal{G},\mathcal{G}')$ the set of all isogenies $\phi : \bG \to \bG'$ such that $\phi(\bB_0) = \bB_0'$ and $\phi(\bT_0) = \bT_0'$. Moreover, if $\mathcal{G} = \mathcal{G}'$ then we set $\Iso(\mathcal{G}) := \Iso(\mathcal{G},\mathcal{G})$. Note that we have an action of $\bT_0$ on $\Iso(\mathcal{G},\mathcal{G}')$ given by $t\cdot \phi = \phi \circ \imath_t^{-1}$. Now, if $\phi \in \Iso(\mathcal{G},\mathcal{G}')$ is an isogeny then we have induced maps $X(\phi) : X(\bT_0') \to X(\bT_0)$ and $\widecheck{X}(\phi) : \widecheck{X}(\bT_0) \to \widecheck{X}(\bT_0')$ defined by $X(\phi)(x) = x\circ\phi$ and $\widecheck{X}(\phi)(y) = \phi\circ y$ respectively. These maps are dual to each other, i.e., $\widecheck{X(\phi)} = \widecheck{X}(\phi)$. Moreover, $X(\phi)$ is a $p$-isogeny $\mathcal{R}' \to \mathcal{R}$ of the corresponding based root data and the map $\Iso(\mathcal{G},\mathcal{G}') \to \Iso_p(\mathcal{R}',\mathcal{R})$ defined by $\phi \mapsto X(\phi)$ is constant on $\bT_0$-orbits.
\end{pa}

\begin{pa}
Now assume $F \in \Iso(\mathcal{G})$ and $F' \in \Iso(\mathcal{G}')$ are Steinberg endomorphisms. We will denote by $\Iso((\mathcal{G},F),(\mathcal{G}',F')) \subseteq \Iso(\mathcal{G},\mathcal{G}')$ those isogenies which commute with $F$ and $F'$, i.e., $\phi \circ F = F' \circ \phi$. We also set $\Iso(\mathcal{G},F) = \Iso((\mathcal{G},F),(\mathcal{G},F))$. Moreover, we denote by $\Iso_p((\mathcal{R}',F'),(\mathcal{R},F)) \subseteq \Iso_p(\mathcal{R}',\mathcal{R})$ the set of $p$-isogenies commuting with $X(F)$ and $X(F')$. As before, we set $\Iso_p(\mathcal{R},F) = \Iso_p((\mathcal{R},F),(\mathcal{R},F))$. We will need the following result known as the isogeny theorem, see \cite[1.5]{steinberg:1999:the-isomorphism-and-isogeny-theorems} and \cite[1.4.23]{geck-malle:2016:reductive-groups-and-steinberg-maps}.
\end{pa}

\begin{thm}[Chevalley]\label{thm:isogeny}
The map $\phi \mapsto X(\phi)$ induces a bijection $\Iso(\mathcal{G},\mathcal{G}')/\bT_0 \to \Iso_p(\mathcal{R}',\mathcal{R})$ and a surjection $\Iso((\mathcal{G},F),(\mathcal{G}',F')) \to \Iso_p((\mathcal{R}',F'),(\mathcal{R},F))$.
\end{thm}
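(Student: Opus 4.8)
The plan is to treat the statement as the classical isogeny theorem and to split it into two halves: first the $F$-free bijection $\Iso(\mathcal{G},\mathcal{G}')/\bT_0 \to \Iso_p(\mathcal{R}',\mathcal{R})$, and then the refinement that $\phi\mapsto X(\phi)$ restricts to a surjection onto the $F$-equivariant $p$-isogenies. The first half is the genuinely structural input, and I would largely cite it (\cite[1.5]{steinberg:1999:the-isomorphism-and-isogeny-theorems}, \cite[1.4.23]{geck-malle:2016:reductive-groups-and-steinberg-maps}); the second half I would then deduce from it by a Lang--Steinberg argument on the torus $\bT_0$.

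For the bijection, the fact that $\phi\mapsto X(\phi)$ is well defined, lands in $\Iso_p(\mathcal{R}',\mathcal{R})$, and is constant on $\bT_0$-orbits was recorded in \cref{pa:group-isogeny}. For injectivity of the induced map I would fix Chevalley parametrisations $u_\alpha\colon\mathbb{G}_a\to\bX_\alpha$ of the root subgroups. Any $\phi\in\Iso(\mathcal{G},\mathcal{G}')$ carries $\bX_\alpha$ onto $\bX'_{b(\alpha)}$ for the bijection $b\colon\Delta\to\Delta'$ attached to $X(\phi)$, and since $\phi$ intertwines the $\bT_0$- and $\bT_0'$-actions on the root subgroups its restriction $\bX_\alpha\to\bX'_{b(\alpha)}$ has the monomial form $u_\alpha(c)\mapsto u'_{b(\alpha)}(\gamma_\alpha c^{q(\alpha)})$ for a scalar $\gamma_\alpha\in\mathbb{G}_m$; the datum $(b,q)$ together with $\phi|_{\bT_0}$ is exactly what $X(\phi)$ remembers. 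Hence two isogenies with the same image under $X$ agree on $\bT_0$ and differ only in the scalars $\gamma_\alpha$. Conjugating $\phi$ by $t\in\bT_0$ replaces $\gamma_\alpha$ by $\alpha(t)\gamma_\alpha$, and since the simple roots are $\mathbb{Z}$-linearly independent in $X(\bT_0)$ the map of tori $\bT_0\to(\mathbb{G}_m)^{\Delta}$, $t\mapsto(\alpha(t))_{\alpha\in\Delta}$, is surjective; so a suitable $t$ makes $\gamma_\alpha=1$ for every $\alpha\in\Delta$, whereupon the Chevalley commutator relations and the action of the Weyl group force $\gamma_\alpha=1$ for all $\alpha\in\Phi$. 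This gives injectivity modulo $\bT_0$. Surjectivity is the hard existence statement: given $\varphi\in\Iso_p(\mathcal{R}',\mathcal{R})$ with data $(b,q)$, one builds a candidate $\phi$ from the dual cocharacter map on $\bT_0$ and the formulas $u_\alpha(c)\mapsto u'_{b(\alpha)}(c^{q(\alpha)})$, and checks that Steinberg's presentation of $\bG$ by generators and relations is respected; the crucial point is that $x\mapsto x^{p^a}$ is an additive endomorphism of $\mathbb{G}_a$ in characteristic $p$ and that the structure constants in the commutator formula transform consistently. I would cite rather than reproduce this verification.

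Granting the bijection, I would argue the surjection as follows. Let $\psi\in\Iso_p((\mathcal{R}',F'),(\mathcal{R},F))$ and choose $\phi\in\Iso(\mathcal{G},\mathcal{G}')$ with $X(\phi)=\psi$. Both $\phi\circ F$ and $F'\circ\phi$ lie in $\Iso(\mathcal{G},\mathcal{G}')$, and applying $X$ (which is contravariant) gives $X(\phi\circ F)=X(F)\circ\psi$ and $X(F'\circ\phi)=\psi\circ X(F')$; these coincide because $\psi$ commutes with $X(F)$ and $X(F')$. By the injectivity just established there is $t\in\bT_0$ with $\phi\circ F=(F'\circ\phi)\circ\imath_t^{-1}$. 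I then replace $\phi$ by $\phi\circ\imath_s^{-1}$ for a suitable $s\in\bT_0$; this stays in the $\bT_0$-orbit, so $X$ is unchanged, and a direct computation shows that $\phi\circ\imath_s^{-1}$ commutes with $F$ and $F'$ precisely when $s$ solves an equation of the form $s\,F(s)^{-1}=t'$ in $\bT_0$, where $t'\in\bT_0$ is determined by $t$. Such an $s$ exists by Lang's theorem, since $\bT_0$ is connected and $F$ restricts to a Steinberg endomorphism of it. This produces an element of $\Iso((\mathcal{G},F),(\mathcal{G}',F'))$ mapping to $\psi$, proving surjectivity. One does not expect injectivity here: the fibre of the restricted map over $\psi$ is an orbit under $\bT_0^F$ rather than under $\bT_0$.

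The main obstacle is the existence half of the bijection --- manufacturing an honest morphism of algebraic groups out of the purely combinatorial $p$-isogeny --- which is Chevalley's theorem and which I would invoke by reference. Everything else, namely the $\bT_0$-rigidity of isogenies with a fixed effect on the root datum and the Lang--Steinberg descent to the $F$-equivariant setting, is then formal.
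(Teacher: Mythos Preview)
The paper does not give its own proof of this theorem: it is stated as a known result attributed to Chevalley, with citations to \cite[1.5]{steinberg:1999:the-isomorphism-and-isogeny-theorems} and \cite[1.4.23]{geck-malle:2016:reductive-groups-and-steinberg-maps}, and no argument is supplied. So there is nothing in the paper to compare your argument against line by line.

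That said, your proposal is a correct and standard treatment, and it is in the same spirit as the cited references. You rightly identify that the existence half of the bijection is the substantive structural input and defer it to the literature, while the injectivity-modulo-$\bT_0$ statement you sketch (normalising the root-group scalars $\gamma_\alpha$ via the surjection $\bT_0\to(\mathbb{G}_m)^{\Delta}$) is the usual argument. Your deduction of the $F$-equivariant surjection from the plain bijection via Lang--Steinberg on $\bT_0$ is exactly the approach taken in \cite[1.4.23]{geck-malle:2016:reductive-groups-and-steinberg-maps}: one compares $\phi\circ F$ and $F'\circ\phi$, observes they lie in the same $\bT_0$-orbit, and then solves a Lang equation in $\bT_0$ to correct $\phi$ within its orbit. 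One small point of care: the precise Lang equation that falls out is $s^{-1}F(s)=F(t)$ (up to a central element), rather than $sF(s)^{-1}=t'$ as you wrote; both forms are solvable by Lang--Steinberg on the connected torus $\bT_0$, so this does not affect the conclusion. Your closing remark that the fibre is a $\bT_0^F$-orbit rather than a $\bT_0$-orbit, so that one should not expect injectivity in the $F$-equivariant setting, is also correct and worth keeping.
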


\section{Duality}\label{sec:duality}
\begin{pa}\label{pa:notation}
Let us assume that $\mathcal{G} = (\bG,\bB_0,\bT_0)$ and $\mathcal{R} = \mathcal{R}(\mathcal{G}) = (X,\Phi,\Delta,\widecheck{X},\widecheck{\Phi},\widecheck{\Delta})$ are as in \cref{pa:root-datum-alg-grp}. By the existence theorem for reductive groups there exists a triple $\mathcal{G}^{\star} = (\bG^{\star},\bB_0^{\star},\bT_0^{\star})$ with based root datum $\mathcal{R}^{\star} := \mathcal{R}(\mathcal{G}^{\star}) = (X^{\star},\Delta^{\star},\widecheck{X}^{\star},\widecheck{\Delta}^{\star})$ such that we have an isomorphism $\mathcal{R}^{\star} \to \widecheck{\mathcal{R}}$ of based root data. We say the pair $(\mathcal{G},\mathcal{G}^{\star})$ is a dual pair or that $\mathcal{G}^{\star}$ is dual to $\mathcal{G}$. Specifically, if $(\mathcal{G},\mathcal{G}^{\star})$ is a dual pair then we have a $\mathbb{Z}$-module isomorphism $\delta : X^{\star} \to \widecheck{X}$ such that $\delta(\Delta^{\star}) = \widecheck{\Delta}$ and $\widecheck{\delta}(\Delta) = \widecheck{\Delta}^{\star}$ where $\widecheck{\delta} : X \to \widecheck{X}^{\star}$ is the dual of $\delta$. Now assume $(\mathcal{G}',\mathcal{G}'^{\star})$ is another dual pair, so we have an isomorphism $\delta' : \mathcal{R}'^{\star} \to \widecheck{\mathcal{R}}'$ of based root data then we have the following easy lemma.
\end{pa}

\begin{lem}\label{lem:dual-isogenies}
The map ${}^{\star} : \Hom(X',X) \to \Hom(X^{\star},X'^{\star})$ defined by $\varphi^{\star} = \delta'^{-1}\circ \widecheck{\varphi}\circ\delta$ is bijective and contravariant and maps $\Iso_p(\mathcal{R}',\mathcal{R})$ onto $\Iso_p(\mathcal{R}^{\star},\mathcal{R}'^{\star})$.
\end{lem}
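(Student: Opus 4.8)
The plan is to reduce everything to \cref{lem:dual-iso}, together with two elementary facts: that $\delta$ and $\delta'$ are themselves isomorphisms of based root data, and that $p$-isogenies are closed under composition. First I would dispatch the formal properties of $\varphi \mapsto \varphi^{\star}$. Since $\widecheck{\phantom{\varphi}} : \Hom(X',X) \to \Hom(\widecheck{X},\widecheck{X}')$ is bijective and contravariant (by \cref{pa:iso-def}) and $f \mapsto \delta'^{-1}\circ f\circ\delta$ is a bijection $\Hom(\widecheck{X},\widecheck{X}') \to \Hom(X^{\star},X'^{\star})$ with inverse $h \mapsto \delta'\circ h\circ\delta^{-1}$, the composite ${}^{\star}$ is a bijection. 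Contravariance, $(\varphi\circ\psi)^{\star} = \psi^{\star}\circ\varphi^{\star}$, then follows by exactly the bookkeeping used for $\widecheck{\varphi\circ\psi} = \widecheck{\psi}\circ\widecheck{\varphi}$, after inserting a cancelling pair $\delta'\circ\delta'^{-1}$ coming from the intermediate dual pair.

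Next I would check that $\delta \in \Hom(X^{\star},\widecheck{X})$ is an isomorphism of based root data, i.e., lies in $\Iso_p(\mathcal{R}^{\star},\widecheck{\mathcal{R}})$ with every $q(\alpha)=1$. As $\delta$ is a lattice isomorphism, $\delta$ and $\widecheck{\delta}$ are injective, which is condition~(1) of \cref{pa:iso-def}; and the two stated compatibilities $\delta(\Delta^{\star}) = \widecheck{\Delta}$ and $\widecheck{\delta}(\Delta) = \widecheck{\Delta}^{\star}$ give condition~(2) on roots and on coroots respectively, with $b$ the restriction of $\delta^{-1}$. Hence $\delta^{-1} \in \Iso_p(\widecheck{\mathcal{R}},\mathcal{R}^{\star})$ as well, and likewise $\delta'$ and $\delta'^{-1}$ are $p$-isogenies. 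A one-line inspection of the definition also shows that the composite of two $p$-isogenies is a $p$-isogeny, the data combining via $b = b_2\circ b_1$ and $q(\alpha) = q_1(\alpha)\,q_2(b_1(\alpha))$.

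Putting these together: if $\varphi \in \Iso_p(\mathcal{R}',\mathcal{R})$ then $\widecheck{\varphi} \in \Iso_p(\widecheck{\mathcal{R}},\widecheck{\mathcal{R}}')$ by \cref{lem:dual-iso}, hence $\varphi^{\star} = \delta'^{-1}\circ\widecheck{\varphi}\circ\delta$ is a composite of $p$-isogenies and so lies in $\Iso_p(\mathcal{R}^{\star},\mathcal{R}'^{\star})$. Conversely, given $\psi \in \Iso_p(\mathcal{R}^{\star},\mathcal{R}'^{\star})$, the composite $\delta'\circ\psi\circ\delta^{-1}$ lies in $\Iso_p(\widecheck{\mathcal{R}},\widecheck{\mathcal{R}}')$, so by \cref{lem:dual-iso} it equals $\widecheck{\varphi}$ for a unique $\varphi \in \Iso_p(\mathcal{R}',\mathcal{R})$, and then $\varphi^{\star} = \delta'^{-1}\circ\widecheck{\varphi}\circ\delta = \psi$. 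Thus ${}^{\star}$ carries $\Iso_p(\mathcal{R}',\mathcal{R})$ onto $\Iso_p(\mathcal{R}^{\star},\mathcal{R}'^{\star})$, as claimed.

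I do not expect a genuine obstacle here: the substance is \cref{lem:dual-iso}, and the remainder is diagram-chasing. The only point needing care is keeping the three opposing directions straight --- isogenies of algebraic groups, $p$-isogenies of root data, and duality all reverse arrows --- and verifying that $\delta$ and $\delta'$ are honest isomorphisms of \emph{based} root data rather than merely lattice isomorphisms respecting the simple systems; this is exactly where both compatibilities of $\delta$ with $\Delta$ and $\widecheck{\Delta}$ are used.
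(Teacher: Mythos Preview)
Your proof is correct; it is precisely the argument the paper has in mind when it states this as an ``easy lemma'' without proof, namely transporting \cref{lem:dual-iso} along the isomorphisms $\delta$ and $\delta'$ of based root data and using that $p$-isogenies are closed under composition. Your closing remark identifies the only point requiring care: that $\delta$ and $\delta'$ are genuine isomorphisms of based root data (hence $p$-isogenies with $q\equiv 1$, the bijection $b$ on simple roots being compatible with that on simple coroots), which is built into the definition of a dual pair in \cref{pa:notation}.
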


\begin{pa}\label{pa:dual-isogenies}
Let us assume $\mathcal{G}' = (\bG',\bB_0',\bT_0')$ and $\mathcal{G}'^{\star} = (\bG'^{\star},\bB_0'^{\star},\bT_0'^{\star})$. As in \cref{pa:group-isogeny} we have an action of $\bT_0'^{\star}$ on $\Iso(\mathcal{G}'^{\star},\mathcal{G}^{\star})$ and combining \cref{lem:dual-isogenies} with \cref{thm:isogeny} we obtain a bijection
%%%%
\begin{equation}\label{eq:isogeny-bijection}
\Iso(\mathcal{G},\mathcal{G}')/\bT_0 \to \Iso(\mathcal{G}'^{\star},\mathcal{G}^{\star})/\bT_0'^{\star}
\end{equation}
%%%
between the orbits. We say an isogeny $\sigma^{\star} \in \Iso(\mathcal{G}'^{\star},\mathcal{G}^{\star})$ is \emph{dual} to $\sigma \in \Iso(\mathcal{G},\mathcal{G}')$ if $X(\sigma^{\star}) = X(\sigma)^{\star}$, i.e., their orbits correspond under the bijection in \cref{eq:isogeny-bijection}. The following is useful to note.
\end{pa}

\begin{lem}\label{lem:injective-dual-isog}
If $\sigma^{\star} \in \Iso(\mathcal{G}'^{\star},\mathcal{G}^{\star})$ is an isogeny dual to $\sigma \in \Iso(\mathcal{G},\mathcal{G}')$ then $\sigma$ is injective if and only if $\sigma^{\star}$ is injective. Moreover, if $\sigma$ is injective then $\sigma^{\star-1}$ is dual to $\sigma^{-1}$.
\end{lem}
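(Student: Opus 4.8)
The plan is to reduce injectivity of $\sigma$ to a property of the induced lattice map $X(\sigma)$ and then exploit that dualising essentially transposes this map. First I would note that $\ker\sigma$ is a finite normal subgroup of the connected group $\bG$, hence is central --- the conjugation action of the connected group $\bG$ on the finite set $\ker\sigma$ is necessarily trivial --- so $\ker\sigma \subseteq Z(\bG) \subseteq \bT_0$. Thus $\ker\sigma = \ker\sigma_0$ where $\sigma_0 := \sigma|_{\bT_0} : \bT_0 \to \bT_0'$, and $\sigma$ is injective if and only if $\sigma_0$ is. Identifying $\bT_0$ with $\Hom(X(\bT_0),\mathbb{K}^{\times})$ in the usual way, $\ker\sigma_0$ is identified with $\Hom\bigl(X(\bT_0)/X(\sigma)(X(\bT_0')),\mathbb{K}^{\times}\bigr)$; the map $X(\sigma)$ is injective, being the underlying lattice map of a $p$-isogeny (see \cref{pa:iso-def}, \cref{pa:group-isogeny}), so its cokernel is a finite abelian group, and a homomorphism from a finite abelian group into $\mathbb{K}^{\times}$ is trivial exactly when that group has order a power of $p$, since $\mathbb{K}^{\times}$ contains roots of unity of every order prime to $p$ but none of order $p$. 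Hence $\sigma$ is injective if and only if the index $[X(\bT_0):X(\sigma)(X(\bT_0'))]$ is a power of $p$ (and $\sigma$ is an isomorphism of algebraic groups precisely when this index is $1$).

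Next I would apply the identical analysis to $\sigma^{\star}$. As $\sigma^{\star}$ is dual to $\sigma$ we have $X(\sigma^{\star}) = X(\sigma)^{\star} = \delta'^{-1}\circ\widecheck{X(\sigma)}\circ\delta$ by \cref{pa:dual-isogenies} and \cref{lem:dual-isogenies}, where $\delta,\delta'$ are the lattice isomorphisms furnished by the two dual pairs. Because $\delta$ and $\delta'$ are isomorphisms, the cokernel of $X(\sigma^{\star})$ is isomorphic to that of $\widecheck{X(\sigma)}$; and writing $X(\sigma)$ as an integer matrix with respect to $\mathbb{Z}$-bases, the matrix of $\widecheck{X(\sigma)}$ is its transpose, a square matrix with the same determinant up to sign (equivalently, the Smith normal form is transpose-invariant, so the two cokernels are in fact isomorphic abelian groups). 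Therefore $[X(\bT_0):X(\sigma)(X(\bT_0'))]$ and $[X(\bT_0'^{\star}):X(\sigma^{\star})(X(\bT_0^{\star}))]$ coincide, and combining this with the previous paragraph applied to both $\sigma$ and $\sigma^{\star}$ gives the first assertion.

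For the second assertion, assume $\sigma$ is injective, so that $\sigma^{-1} \in \Iso(\mathcal{G}',\mathcal{G})$ and, by the first part, $\sigma^{\star-1} \in \Iso(\mathcal{G}^{\star},\mathcal{G}'^{\star})$; it remains only to verify $X(\sigma^{\star-1}) = X(\sigma^{-1})^{\star}$. This is formal: $\phi \mapsto X(\phi)$ reverses composition and sends identities to identities, so $X(\sigma^{-1}) = X(\sigma)^{-1}$, and $\varphi \mapsto \widecheck{\varphi}$ --- hence also $\varphi \mapsto \varphi^{\star}$ --- reverses composition and sends identities to identities, so $(\varphi^{-1})^{\star} = (\varphi^{\star})^{-1}$ for invertible $\varphi$. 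Thus $X(\sigma^{\star-1}) = X(\sigma^{\star})^{-1} = (X(\sigma)^{\star})^{-1} = (X(\sigma)^{-1})^{\star} = X(\sigma^{-1})^{\star}$, which is exactly the statement that $\sigma^{\star-1}$ is dual to $\sigma^{-1}$.

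The only genuinely delicate point is the first one: one must keep track that injectivity is being measured on $\mathbb{K}$-points, so that the relevant invariant is whether the index of the character lattices is a power of $p$ rather than merely finite, and one must be careful that dualising really transposes $X(\sigma)$ up to the isomorphisms $\delta,\delta'$. Once those are pinned down, everything else is routine bookkeeping with the contravariant operations $\widecheck{\phantom{\varphi}}$, ${}^{\star}$ and $X(-)$.
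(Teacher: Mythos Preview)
Your argument is correct and closely tracks the paper's. The paper phrases the injectivity criterion as $\Coker(\widecheck{X}(\sigma))_{p'}=0$, citing Bonnaf\'e, and then invokes a duality between $\Coker(X(\sigma))$ and $\Coker(\widecheck{X}(\sigma))$ from SGA3 to pass to $\Coker(\widecheck{X}(\sigma^{\star}))\cong\Coker(X(\sigma))$; you instead derive the equivalent criterion $\Coker(X(\sigma))_{p'}=0$ directly from $\ker\sigma\subseteq Z(\bG)\subseteq\bT_0$ together with the identification $\bT_0\cong\Hom(X,\mathbb{K}^{\times})$, and you replace the SGA3 citation with the elementary observation that a square integer matrix and its transpose have the same Smith normal form, hence isomorphic cokernels. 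These are the same ingredients, just with your version self-contained rather than outsourced to references. For the second assertion both proofs perform the same formal bookkeeping with the contravariant operations.

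One caveat: your claim that $\sigma^{-1}\in\Iso(\mathcal{G}',\mathcal{G})$ is not literally true in general (e.g.\ when $\sigma$ is a Frobenius its set-theoretic inverse is not a morphism of varieties, so $X(\sigma^{-1})$ need not land in the lattice). The paper commits the same abuse in writing $\widecheck{X}(\sigma^{-1})$, however, and what is really meant and used downstream is the relation $X(\sigma^{\star})^{-1}=(X(\sigma)^{-1})^{\star}$ between the injective lattice maps (or their $\mathbb{Q}$-linear extensions), which is exactly what your computation establishes.
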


\begin{proof}
If $\phi : M \to N$ is a homomorphism of $\mathbb{Z}$-modules then we set $\Coker(\phi) = N/\phi(M)$ and $\Coker(\phi)_{p'}$ to be the quotient of $\Coker(\phi)$ by its $p$-torsion submodule. By \cite[1.11]{bonnafe:2006:sln} we have $\sigma$ is injective if and only if $\Coker(\widecheck{X}(\sigma))_{p'}=0$ and similarly $\sigma^{\star}$ is injective if and only if $\Coker(\widecheck{X}(\sigma^{\star}))_{p'}=0$. From the definition of $\sigma^{\star}$ we easily see that $\widecheck{\delta}$ induces an isomorphism $\Coker(X(\sigma)) \to \Coker(\widecheck{X}(\sigma^{\star}))$. Moreover, by \cite[6.2.3]{sga:2011:sga3-TomeIII}, we have $\Coker(X(\sigma))$ and $\Coker(\widecheck{X}(\sigma))$ are finite groups in duality so $\Coker(\widecheck{X}(\sigma^{\star}))_{p'} \cong \Coker(X(\sigma))_{p'} = 0$ if and only if $\Coker(\widecheck{X}(\sigma))_{p'} = 0$. This proves the first statement.

Now assume $\sigma$ is injective then so is $\sigma^{\star}$. As $\sigma^{\star}$ is dual to $\sigma$ we have $\delta'\circ X(\sigma^{\star}) = \widecheck{X}(\sigma)\circ\delta$ but this implies $\widecheck{X}(\sigma^{-1})\circ\delta' = \delta\circ X(\sigma^{\star-1})$ because $\widecheck{X}(\sigma^{-1})\circ\widecheck{X}(\sigma) = \widecheck{X}(\sigma^{-1}\circ\sigma) = \ID_{\widecheck{X}}$ and $X(\sigma^{\star})\circ X(\sigma^{\star -1}) = X(\sigma^{\star-1}\circ\sigma^{\star}) = \ID_X$. Hence $X(\sigma^{\star-1}) = X(\sigma^{-1})^{\star}$ so $\sigma^{\star-1}$ is dual to $\sigma^{-1}$.
\end{proof}

\begin{pa}
We now consider duality in the presence of Steinberg endomorphisms. For this, assume $F \in \Iso(\mathcal{G})$ and $F' \in \Iso(\mathcal{G}')$ are Steinberg endomorphisms then we assume chosen isogenies $F^{\star} \in \Iso(\mathcal{G}^{\star})$ and $F'^{\star} \in \Iso(\mathcal{G}'^{\star})$ dual to $F$ and $F'$ respectively. The isogenies $F^{\star}$ and $F'^{\star}$ are then also Steinberg endomorphisms and are Frobenius endomorphisms if $F$ and $F'$ are, see \cite[1.4.17, 1.4.27]{geck-malle:2016:reductive-groups-and-steinberg-maps}. If $\sigma \in \Iso((\mathcal{G},F),(\mathcal{G}',F'))$ is an isogeny commuting with $F$ and $F'$ then it follows from \cref{thm:isogeny} that there exists an isogeny $\sigma^{\star} \in \Iso((\mathcal{G}'^{\star},F'^{\star}),(\mathcal{G}^{\star},F^{\star}))$, commuting with $F'^{\star}$ and $F^{\star}$, whose orbit corresponds to that of $\sigma$ under the bijection in \cref{eq:isogeny-bijection}. In this case we say $\sigma^{\star}$ is \emph{dual} to $\sigma$.
\end{pa}

\subsection{Levi Subgroups}
\begin{pa}
For each subset $I \subseteq \Delta$ we have a corresponding parabolic subgroup $\bP_I = \langle \bB_0, \bX_{-\alpha} \mid \alpha \in I\rangle$ of $\bG$ which has a Levi complement $\bL_I = \langle \bT_0, \bX_{\pm\alpha} \mid \alpha \in I\rangle$; this is a connected reductive algebraic group. The root system of $\bL_I$ is the parabolic subsystem $\langle I \rangle \subseteq \Phi$ generated by $I$. The intersection $\bB_I = \bL_I \cap \bB_0$ is a Borel subgroup of $\bL_I$ so we obtain a triple $\mathcal{L}_I = (\bL_I,\bB_I,\bT_0)$ whose associated based root datum is $\mathcal{R}_I = (X,\langle I \rangle,I,\widecheck{X},\langle \widecheck{I}\rangle,\widecheck{I})$, where $\widecheck{I} \subseteq \widecheck{\Delta}$ is the images of $I$ under the bijection $\Delta \to \widecheck{\Delta}$ and $\langle \widecheck{I}\rangle \subseteq \widecheck{\Phi}$ is the parabolic subsystem generated by $\widecheck{I}$. Recall that if $\bP \leqslant \bG$ is any parabolic subgroup with Levi complement $\bL \leqslant \bP$ then there exists a unique subset $I \subseteq \Delta$ such that $\bP = {}^g\bP_I$ and $\bL = {}^g\bL_I$ for some $g \in \bG$.
\end{pa}

\begin{pa}
To $I$ we have a corresponding subset $\widecheck{I}^{\star} := \delta(I) \subseteq \widecheck{\Delta}^{\star}$ of simple coroots for the dual $\mathcal{G}^{\star}$. We then set $I^{\star} \subseteq \Delta^{\star}$ to be the unique set of roots mapping onto $\widecheck{I}^{\star}$ under the bijection $\Delta^{\star} \to \widecheck{\Delta}^{\star}$. As above we obtain a parabolic subgroup $\bP_I^{\star} := \bP_{I^{\star}} \leqslant \bG^{\star}$ with Levi complement $\bL_I^{\star} := \bL_{I^{\star}}$. If $\bB_I^{\star}$ is the intersection $\bL_I^{\star} \cap \bB_0^{\star}$ then $\mathcal{L}_I^{\star} = (\bL_I^{\star}, \bB_I^{\star}, \bT_0^{\star})$ is dual to $\mathcal{L}_I = (\bL_I,\bB_I,\bT_0)$ and the associated based root datum is $\mathcal{R}_I^{\star} = (X^{\star},\langle I^{\star}\rangle,I^{\star},\widecheck{X}^{\star},\langle \widecheck{I}^{\star} \rangle,\widecheck{I}^{\star})$, with the notation as above. The isomorphism between the based root data is simply given by $\delta : X^{\star} \to \widecheck{X}$.
\end{pa}

\subsection{Weyl Groups}
\begin{pa}\label{pa:induced-iso-weyl}
Let us denote by $W$, resp., $W^{\star}$, the Weyl group $W_{\bG}(\bT_0)$, resp., $W_{\bG^{\star}}(\bT_0^{\star})$, of $\bG$, resp., $\bG^{\star}$. The natural map $W \to \GL(X)$, resp., $W^{\star} \to \GL(X^{\star})$, is a faithful representation of the Weyl group $W$, resp., $W^{\star}$. The anti-isomorphism ${}^{\star} : \GL(X) \to \GL(X^{\star})$ defined by $w^{\star} = \delta^{-1} \circ \widecheck{w} \circ \delta$ maps $W$ onto $W^{\star}$. Moreover, assume $\sigma \in \Iso(\mathcal{G},\mathcal{G}')$ and $\sigma^{\star} \in \Iso(\mathcal{G}'^{\star},\mathcal{G}^{\star})$ are dual isogenies then these induce isomorphisms $\sigma : W \to W'$ and $\sigma^{\star} : W'^{\star} \to W^{\star}$, where $W' = W_{\bG'}(\bT_0')$ and $W'^{\star} = W_{\bG'^{\star}}(\bT_0'^{\star})$. With this we have the following diagram is commutative
%%%%
\begin{equation*}
\begin{tikzcd}
W \arrow{d}{{}^{\star}}\arrow{r}{\sigma} & W' \arrow{d}{{}^{\star}}\\
W^{\star} \arrow{r}{\sigma^{\star -1}} & W'^{\star}
\end{tikzcd}
\end{equation*}
%%%%
where ${}^{\star} : W' \to W'^{\star}$ is the anti-isomorphism induced by $\delta'$.
\end{pa}

\begin{pa}
If $I \subseteq \Delta$ is a subset then we have a corresponding parabolic subgroup $W_I \leqslant W$ generated by the reflections corresponding to the simple roots in $I$; this is the Weyl group $W_{\bL_I}(\bT_0)$ of the Levi subgroup $\bL_I$ defined with respect to $\bT_0$. The anti-isomorphism ${}^{\star}$ maps the parabolic subgroup $W_I$ onto the parabolic subgroup of $W^{\star}$ generated by the reflections corresponding to the roots in $I^{\star}$.
\end{pa}

\section{Duality Between Tori and Lusztig Series}\label{sec:duality-Lusztig-series}
\begin{assumption}
From this point forward we assume that $\mathcal{G} = (\bG,\bB_0,\bT_0)$ and $\mathcal{G}^{\star} = (\bG^{\star},\bB_0^{\star},\bT_0^{\star})$ are dual triples, as in \cref{pa:notation}, and $F \in \Iso(\mathcal{G})$ and $F^{\star} \in \Iso(\mathcal{G}^{\star})$ are dual Steinberg endomorphisms. Moreover, for any element $w \in W = W_{\bG}(\bT_0)$, resp., $x \in W^{\star} = W_{\bG^{\star}}(\bT_0^{\star})$, we assume chosen an element $n_w \in N_{\bG}(\bT_0)$, resp., $n_x \in N_{\bG^{\star}}(\bT_0^{\star})$, representing $w$, resp., $x$.
\end{assumption}

\begin{pa}\label{pa:Steinberg-end}
For any $w \in W$ we set $Fw := Fn_w : \bG \to \bG$. This is a Steinberg endomorphism of $\bG$ stabilising $\bT_0$ so for any $w \in W$ we obtain a finite subgroup $\bT_0^{Fw} \leqslant \bT_0$. We note that the restriction of $Fw$ to $\bT_0$ does not depend upon the choice of representative $n_w$ used to define it. We will denote by $\mathcal{C}_{\bG}(\bT_0,F)$ the set of all pairs $(w,\theta)$ consisting of an element $w \in W$ and an irreducible character $\theta \in \Irr(\bT_0^{Fw})$. The group $W$ acts on $\mathcal{C}_{\bG}(\bT_0,F)$ via $z\cdot (w,\theta) = (zwF(z)^{-1},{}^{F(n_z)}\theta)$. Now let us denote by $\mathcal{C}(\bG,F)$ the set of all pairs $(\bT,\theta)$ consisting of an $F$-stable maximal torus $\bT \leqslant \bG$ and an irreducible character $\theta \in \Irr(\bT^F)$. Clearly the group $\bG^F$ acts on $\mathcal{C}(\bG,F)$ via $g\cdot (\bT,\theta) = ({}^g\bT,{}^g\theta)$. Given $(w,\theta) \in \mathcal{C}_{\bG}(\bT_0,F)$ we obtain an element $({}^{g_w}\bT_0,{}^{g_w}\theta) \in \mathcal{C}(\bG,F)$ by choosing an element $g_w \in \bG$ such that $g_w^{-1}F(g_w) = F(n_w)$. We then have the following well known lemma.
\end{pa}

\begin{lem}\label{lem:char-orb-bij}
The map $\mathcal{C}_{\bG}(\bT_0,F)/W \to \mathcal{C}(\bG,F)/\bG^F$ defined by $[w,\theta] \mapsto [{}^{g_w}\bT_0,{}^{g_w}\theta]$ is a well-defined bijection.
\end{lem}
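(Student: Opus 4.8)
The plan is to check, in turn, that the assignment $[w,\theta] \mapsto [{}^{g_w}\bT_0,{}^{g_w}\theta]$ is well defined, surjective, and injective; beyond bookkeeping the only real input is the Lang--Steinberg theorem, applied to $\bG$ and to the torus $\bT_0$ equipped with the Steinberg endomorphism $Fw = F\imath_{n_w}$. For \emph{well-definedness}: if $g_w,g_w'$ both satisfy $x^{-1}F(x) = F(n_w)$ then $g_w'g_w^{-1} \in \bG^F$, so the two pairs they produce lie in one $\bG^F$-orbit; and if $(w',\theta') = z\cdot(w,\theta) = (zwF(z)^{-1},{}^{F(n_z)}\theta)$ for some $z \in W$, then one computes $(g_wF(n_z)^{-1})^{-1}F(g_wF(n_z)^{-1}) = F(n_zn_wF(n_z)^{-1})$, which represents $w'$ and agrees with $F(n_{w'})$ up to an element of $\bT_0$; after a Lang--Steinberg correction inside $\bT_0$ one gets an admissible $g_{w'}$ with ${}^{g_{w'}}\bT_0 = {}^{g_w}\bT_0$, and since $\bT_0$ is abelian it acts trivially on $\Irr(\bT_0^{Fw'})$, giving ${}^{g_{w'}}\theta' = {}^{g_w}\theta$. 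Hence the class depends only on $[w,\theta]$.

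For \emph{surjectivity}, let $(\bT,\theta) \in \mathcal{C}(\bG,F)$. Since all maximal tori of $\bG$ are $\bG$-conjugate I would write $\bT = {}^g\bT_0$, and $F$-stability of $\bT$ forces $n := g^{-1}F(g) \in N_{\bG}(\bT_0)$. Choosing $w \in W$ with $F(w) = \overline n$, the elements $n$ and $F(n_w)$ have the same image in $W$, hence differ by an element of $\bT_0$; using the identity $\imath_{F(n_w)}\circ F = F\circ\imath_{n_w} = Fw$ on $\bT_0$, a Lang--Steinberg computation in $\bT_0$ yields $s \in \bT_0$ with $(gs)^{-1}F(gs) = F(n_w)$. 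Then $g_w := gs$ is admissible, $({}^{g_w}\bT_0)^F = {}^{g_w}(\bT_0^{Fw})$, and $\theta_0 := {}^{g_w^{-1}}\theta$ lies in $\Irr(\bT_0^{Fw})$ with $(w,\theta_0) \mapsto [\bT,\theta]$.

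For \emph{injectivity}, suppose $h \in \bG^F$ satisfies ${}^h({}^{g_w}\bT_0) = {}^{g_{w'}}\bT_0$ and ${}^h({}^{g_w}\theta) = {}^{g_{w'}}\theta'$. Then $m := g_{w'}^{-1}hg_w \in N_{\bG}(\bT_0)$; put $z = \overline m \in W$ and $y = F^{-1}(z)$. Applying $F$ to $h = g_{w'}mg_w^{-1}$, substituting $F(g_w) = g_wF(n_w)$ and $F(g_{w'}) = g_{w'}F(n_{w'})$ and using $F(h)=h$, one obtains $F(n_{w'})F(m) = mF(n_w)$; projecting to $W$ this reads $F(w'z) = zF(w)$, so $w'z = yw$ and $w' = ywF(y)^{-1}$. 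On the other hand $hg_w = g_{w'}m$ forces ${}^m\theta = \theta'$, and since $m$ and $F(n_y)$ represent the same element $z$ of $W$ they differ by an element of $\bT_0$, which acts trivially on $\Irr(\bT_0^{Fw})$; so $\theta' = {}^{F(n_y)}\theta$. Therefore $(w',\theta') = y\cdot(w,\theta)$ and $[w,\theta] = [w',\theta']$, as wanted.

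The step I expect to be the main obstacle recurs in both the well-definedness and surjectivity arguments: passing from an element of $N_{\bG}(\bT_0)$ that merely represents the correct Weyl-group element to the \emph{specific} element $F(n_w)$ demanded by the normalisation $g_w^{-1}F(g_w) = F(n_w)$. Each occurrence calls for a correction by an element of $\bT_0$, whose existence one extracts from the Lang--Steinberg theorem for $\bT_0$ with the Steinberg endomorphism $Fw$, using $\imath_{F(n_w)}\circ F = F\circ\imath_{n_w}$. The remaining verifications --- the identity $({}^{g_w}\bT_0)^F = {}^{g_w}\bT_0^{Fw}$, the triviality of the $\bT_0$-action on $\Irr(\bT_0^{Fw})$, and the Weyl-group bookkeeping --- are routine once the conventions $Fw = F\imath_{n_w}$ and $g_w^{-1}F(g_w) = F(n_w)$ are fixed.
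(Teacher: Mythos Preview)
Your argument is correct and complete. The paper does not actually supply a proof of this lemma: it is introduced with the phrase ``we then have the following well known lemma'' and left unproved. Your direct verification via Lang--Steinberg on $\bG$ (for surjectivity and independence of the choice of $g_w$) and on $\bT_0$ with the twisted Steinberg endomorphism $Fw$ (for the representative-correction steps) is exactly the standard route to this bijection, and the bookkeeping with $w' = ywF(y)^{-1}$ and $\theta' = {}^{F(n_y)}\theta$ in the injectivity step matches the $W$-action $z\cdot(w,\theta) = (zwF(z)^{-1},{}^{F(n_z)}\theta)$ precisely.

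One small cosmetic remark: in your well-definedness computation you obtain $F(n_z)F(n_w)F^2(n_z^{-1}) = F(n_z n_w F(n_z)^{-1})$, which indeed represents $F(w')$ in $W$; it may be worth saying explicitly that the required $\bT_0$-correction comes from Lang--Steinberg for the endomorphism $Fw'$ (not $Fw$) on $\bT_0$, since the target element is $F(n_{w'})$. This is implicit in what you wrote but could be made slightly sharper.
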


\begin{pa}\label{pa:dual-params}
As above, for any $w \in W^{\star}$ we set $wF^{\star} := n_wF^{\star} : \bG^{\star} \to \bG^{\star}$. Again, this is a Steinberg endomorphism of $\bG$ stabilising $\bT_0^{\star}$ so for any $w \in W^{\star}$ we obtain a finite subgroup $\bT_0^{\star wF^{\star}} \leqslant \bT_0^{\star}$. We will denote by $\mathcal{S}_{\bG^{\star}}(\bT_0^{\star},F^{\star})$ the set of all pairs $(w,s)$ consisting of an element $w \in W^{\star}$ and a semisimple element $s \in \bT_0^{\star wF^{\star}}$. The group $W^{\star}$ acts on this set via $z\cdot (w,s) = (zwF^{\star}(z)^{-1},{}^{n_z}s)$. Similarly to before we denote by $\mathcal{S}(\bG^{\star},F^{\star})$ the set of all pairs $(\bT^{\star},s)$ consisting of an $F^{\star}$-stable maximal torus $\bT^{\star} \leqslant \bG^{\star}$ and a semisimple element $s \in \bT^{\star F^{\star}}$. The finite group $\bG^{\star F^{\star}}$ acts on $\mathcal{S}(\bG^{\star},F^{\star})$ via $g\cdot (\bT^{\star},s) = ({}^g\bT^{\star},{}^gs)$. Given $(w,s) \in \mathcal{S}_{\bG^{\star}}(\bT_0^{\star},F^{\star})$ we obtain an element $({}^{g_w}\bT_0^{\star},{}^{g_w}s) \in \mathcal{S}(\bG^{\star},F^{\star})$ by choosing an element $g_w \in \bG^{\star}$ such that $g_w^{-1}F^{\star}(g_w) = n_w$. Again, we then have the following well known lemma.
\end{pa}

\begin{lem}\label{lem:s/s-orb-bij}
The map $\mathcal{S}_{\bG^{\star}}(\bT_0^{\star},F^{\star})/W^{\star} \to \mathcal{S}(\bG^{\star},F^{\star})/\bG^{\star F^{\star}}$ defined by $[w,s] \mapsto [{}^{g_w}\bT_0^{\star},{}^{g_w}s]$ is a well-defined bijection.
\end{lem}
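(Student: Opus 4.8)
The plan is to observe that the stated map is, for the dual datum $(\bG^{\star},F^{\star})$, the verbatim analogue of \cref{lem:char-orb-bij}: a semisimple element $s\in\bT_0^{\star wF^{\star}}$ plays exactly the role that a character $\theta\in\Irr(\bT_0^{Fw})$ plays there, and in both cases the second component of the pair is merely transported by the conjugation implicit in the $W^{\star}$- (resp.\ $W$-) action. So I would simply run the same argument, which needs nothing beyond the Lang--Steinberg theorem together with careful bookkeeping of the chosen Weyl group representatives $n_w$. Concretely I would verify, in this order: that the map does not depend on the choice of $g_w$; that its image lies in $\mathcal{S}(\bG^{\star},F^{\star})$; that it is constant on $W^{\star}$-orbits; surjectivity; and injectivity.

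The first three checks are routine. If $g_w$ and $g_w'$ are two admissible choices then $g_w(g_w')^{-1}\in\bG^{\star F^{\star}}$, so the two candidate images lie in the same $\bG^{\star F^{\star}}$-orbit; moreover $F^{\star}({}^{g_w}\bT_0^{\star})={}^{g_wn_w}\bT_0^{\star}={}^{g_w}\bT_0^{\star}$, and $n_wF^{\star}(s)n_w^{-1}=s$ forces ${}^{g_w}s$ to be fixed by $F^{\star}$, so the image does lie in $\mathcal{S}(\bG^{\star},F^{\star})$. For $W^{\star}$-invariance, given $(w',s')=z\cdot(w,s)$ I would exhibit an admissible $g_{w'}$ of the form $g_wn_z^{-1}t$ with $t\in\bT_0^{\star}$ --- here one absorbs the discrepancy between $n_zn_wF^{\star}(n_z)^{-1}$ and the chosen representative $n_{w'}$ using that $x\mapsto x\,({}^{n_{w'}}F^{\star}(x))^{-1}$ is surjective on $\bT_0^{\star}$ (Lang--Steinberg for the Steinberg endomorphism $w'F^{\star}$ of $\bT_0^{\star}$) --- after which the two images coincide on the nose because $n_z\in N_{\bG^{\star}}(\bT_0^{\star})$ and $\bT_0^{\star}$ is abelian. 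Surjectivity is of the same flavour: given $(\bT^{\star},s)$, pick $g$ with ${}^g\bT_0^{\star}=\bT^{\star}$, note $g^{-1}F^{\star}(g)$ represents some $w\in W^{\star}$, adjust $g$ by an element of $\bT_0^{\star}$ (Lang--Steinberg again) so that $g^{-1}F^{\star}(g)=n_w$ exactly, and check that $s_0:={}^{g^{-1}}s\in\bT_0^{\star}$ then satisfies $n_wF^{\star}(s_0)n_w^{-1}=s_0$, so that $(w,s_0)$ is a preimage.

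The step I expect to carry the most weight is injectivity, since this is where one must recover the twisted $W^{\star}$-conjugacy from plain $\bG^{\star F^{\star}}$-conjugacy. Suppose $h\in\bG^{\star F^{\star}}$ conjugates the image of $(w,s)$ to that of $(w',s')$. Then $g_{w'}^{-1}hg_w\in N_{\bG^{\star}}(\bT_0^{\star})$; let $v\in W^{\star}$ be its class. Since $s\in\bT_0^{\star}$ the $\bT_0^{\star}$-part of $g_{w'}^{-1}hg_w$ acts trivially on $s$, so ${}^{n_v}s=s'$; and applying $F^{\star}$ to $g_{w'}^{-1}hg_w$ while substituting $F^{\star}(g_w)=g_wn_w$, $F^{\star}(g_{w'})=g_{w'}n_{w'}$ and $F^{\star}(h)=h$ gives $F^{\star}(g_{w'}^{-1}hg_w)=n_{w'}^{-1}(g_{w'}^{-1}hg_w)n_w$, whence $F^{\star}(v)=(w')^{-1}vw$ in $W^{\star}$, i.e.\ $w'=vwF^{\star}(v)^{-1}$. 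Thus $(w',s')=v\cdot(w,s)$, so the two pairs lie in one $W^{\star}$-orbit, as needed. Alternatively one could obtain the statement from the machinery of \cref{sec:aut-rat-orb} by stratifying the $\bG^{\star}$-set of all pairs consisting of a maximal torus and an element of it into orbits and invoking \cref{pa:prop-rat-orb-param}, but translating the resulting cohomological description back into the $W^{\star}$-orbit parametrisation comes down to the same bookkeeping; the direct route above, which also parallels \cite[\S13]{digne-michel:1991:representations-of-finite-groups-of-lie-type} and \cite{geck-malle:2016:reductive-groups-and-steinberg-maps}, seems preferable.
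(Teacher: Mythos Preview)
Your argument is correct in every step; the Lang--Steinberg applications and the bookkeeping with representatives are exactly what is needed, and your injectivity computation is the heart of the matter. The paper itself gives no proof of this lemma---it is recorded as a well-known fact, parallel to \cref{lem:char-orb-bij}---so there is nothing to compare against, but your direct verification is the standard one and would serve perfectly well as the omitted proof.
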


\subsection{Lusztig Series}
\begin{pa}\label{pa:char-to-elms-iso}
For any $w \in W$ we have the Frobenius endomorphism $Fw$ of $\bT_0$ is dual to the Frobenius endomorphism $w^{\star}F^{\star}$ of $\bT_0^{\star}$ with respect to the isomorphism $\widecheck{\delta} : X \to \widecheck{X}$. Let us assume chosen once and for all an isomorphism $\imath : (\mathbb{Q/Z})_{p'} \to \mathbb{K}^{\times}$ and an embedding $\jmath : \mathbb{K}^{\times} \hookrightarrow \Ql$. As in \cite[13.11]{digne-michel:1991:representations-of-finite-groups-of-lie-type} we may define for any $w \in W$ a group isomorphism $\widecheck{\delta}_w : \Irr(\bT_0^{Fw}) \to \bT_0^{\star w^{\star}F^{\star}}$ as follows. Given $\theta \in \Irr(\bT_0^{Fw})$ we choose an element $\chi \in X$ such that $\jmath\circ\Res^{\bT_0}_{\bT_0^{Fw}}(\chi) = \theta$. Let $n > 0$ be such that $X((w^{\star}F^{\star})^n) = p^k$, with $k > 0$, then we set $\widecheck{\delta}_w(\theta) = N_{w^{\star}F^{\star},n}(\widecheck{\delta}(\chi)(\zeta))$ where $\zeta = \imath(1/(p^k-1))$ and $N_{w^{\star}F^{\star},n} : \bT_0^{\star(w^{\star}F^{\star})^n} \to \bT_0^{\star w^{\star}F^{\star}}$ is the norm map, see \cite[11.9]{digne-michel:1991:representations-of-finite-groups-of-lie-type}. With this, the following lemma is easy.
\end{pa}

\begin{lem}\label{lem:bij-DL-chars}
The map $\mathcal{C}_{\bG}(\bT_0,F)/W \to \mathcal{S}_{\bG^{\star}}(\bT_0^{\star},F^{\star})/W^{\star}$ defined by $[w,\theta] \mapsto [w^{\star},\widecheck{\delta}_w(\theta)]$ is a bijection.
\end{lem}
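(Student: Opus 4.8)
The plan is to verify first that the assignment $(w,\theta)\mapsto(w^{\star},\widecheck{\delta}_w(\theta))$ defines a bijection of the underlying sets $\mathcal{C}_{\bG}(\bT_0,F)\to\mathcal{S}_{\bG^{\star}}(\bT_0^{\star},F^{\star})$, and then that this bijection carries $W$-orbits onto $W^{\star}$-orbits, so that it descends to the desired bijection on orbit sets. For the first point, by \cref{pa:induced-iso-weyl} the map $w\mapsto w^{\star}$ is a bijection (indeed an anti-isomorphism) $W\to W^{\star}$, and for each fixed $w$ the map $\widecheck{\delta}_w : \Irr(\bT_0^{Fw})\to\bT_0^{\star w^{\star}F^{\star}}$ of \cref{pa:char-to-elms-iso} is a bijection; so our assignment restricts to a bijection between the fibre over $w$ and the fibre over $w^{\star}$, and since $w\mapsto w^{\star}$ exhausts $W^{\star}$ it is a bijection of sets.

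For the second point, fix $z\in W$ and set $w'=zwF(z)^{-1}$ and $\theta'={}^{F(n_z)}\theta$, so that $z\cdot(w,\theta)=(w',\theta')$. Using that ${}^{\star}$ is an anti-homomorphism together with the commutativity of the diagram in \cref{pa:induced-iso-weyl} (whence $F(z)^{\star}=F^{\star-1}(z^{\star})$), a short computation gives $w'^{\star}=y\,w^{\star}F^{\star}(y)^{-1}$ with $y:=(F(z)^{-1})^{\star}\in W^{\star}$; moreover $z\mapsto y$ is itself a bijection $W\to W^{\star}$. It then remains to check that the second coordinates agree, that is, $\widecheck{\delta}_{w'}(\theta')={}^{n_y}\widecheck{\delta}_w(\theta)$.

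To this end pick $\chi\in X$ with $\jmath\circ\Res^{\bT_0}_{\bT_0^{Fw}}(\chi)=\theta$; then $\jmath\circ\Res^{\bT_0}_{\bT_0^{Fw'}}({}^{F(z)}\chi)=\theta'$, where ${}^{F(z)}\chi$ denotes the natural $W$-action on $X$. Choosing the auxiliary integer $n$ in the definitions of $\widecheck{\delta}_w$ and $\widecheck{\delta}_{w'}$ to be common (the value of $\widecheck{\delta}_w$ is unchanged if $n$ is enlarged), the required identity reduces to two compatibilities: that $\widecheck{\delta}$ intertwines the natural $W$-action on its source with the $W^{\star}$-action on its target through ${}^{\star}$, and that the norm maps $N_{\ast,n}$ commute with conjugation by torus-normalising elements; both are routine diagram chases, using also that the root of unity $\zeta$ is chosen canonically. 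Granting them, $(w',\theta')$ has image in the $W^{\star}$-orbit of $(w^{\star},\widecheck{\delta}_w(\theta))$, and since $z$ runs over $W$ exactly as $y$ runs over $W^{\star}$, the set bijection above matches the two orbit partitions and hence induces a bijection $\mathcal{C}_{\bG}(\bT_0,F)/W\to\mathcal{S}_{\bG^{\star}}(\bT_0^{\star},F^{\star})/W^{\star}$.

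I expect the main obstacle to be the equivariance of $\widecheck{\delta}_w$ for the Weyl-group actions in the precise form above; this is essentially contained in the discussion around \cite[13.11--13.13]{digne-michel:1991:representations-of-finite-groups-of-lie-type}, but one must be careful to keep track of the inversions introduced by ${}^{\star}$ being an anti-isomorphism. An alternative route is to compose \cref{lem:char-orb-bij} and the inverse of \cref{lem:s/s-orb-bij} with the classical duality bijection $\mathcal{C}(\bG,F)/\bG^F\to\mathcal{S}(\bG^{\star},F^{\star})/\bG^{\star F^{\star}}$ of \cite[13.13]{digne-michel:1991:representations-of-finite-groups-of-lie-type} and to check on representatives that the composite is the stated map; this repackages the same computation while letting one quote its hardest part.
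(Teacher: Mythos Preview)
Your proposal is correct; the paper itself provides no proof, stating only that ``the following lemma is easy'' immediately after introducing the isomorphisms $\widecheck{\delta}_w$ in \cref{pa:char-to-elms-iso}. Your direct verification---first that $(w,\theta)\mapsto(w^{\star},\widecheck{\delta}_w(\theta))$ is a bijection of sets, then that it intertwines the $W$- and $W^{\star}$-actions---is exactly the intended argument, and your computation of $w'^{\star}$ and the element $y=(F(z)^{-1})^{\star}$ is accurate.
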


\begin{cor}\label{cor:nabla-bijection}
We have a well-defined bijection
%%%%
\begin{equation*}
\mathcal{C}(\bG,F)/\bG^F \to \mathcal{C}_{\bG}(\bT_0,F)/W \to \mathcal{S}_{\bG^{\star}}(\bT_0^{\star},F^{\star})/W^{\star} \to \mathcal{S}(\bG^{\star},F^{\star})/\bG^{\star F^{\star}}
\end{equation*}
%%%%
given by composing the bijections in \cref{lem:char-orb-bij,lem:s/s-orb-bij,lem:bij-DL-chars}.
\end{cor}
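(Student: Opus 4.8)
The plan is that this is essentially immediate: a composite of bijections is a bijection, so there is nothing to do beyond recording the orientation of the three arrows. First I would note that \cref{lem:char-orb-bij} furnishes a bijection in the direction $\mathcal{C}_{\bG}(\bT_0,F)/W \to \mathcal{C}(\bG,F)/\bG^F$; hence the first arrow of the displayed composite is to be read as the \emph{inverse} of that bijection, which is again a well-defined bijection. The second arrow $[w,\theta] \mapsto [w^{\star},\widecheck{\delta}_w(\theta)]$ is the bijection of \cref{lem:bij-DL-chars}, and the third arrow $[w,s] \mapsto [{}^{g_w}\bT_0^{\star},{}^{g_w}s]$ is the bijection of \cref{lem:s/s-orb-bij}. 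Composing these three maps yields the asserted bijection $\mathcal{C}(\bG,F)/\bG^F \to \mathcal{S}(\bG^{\star},F^{\star})/\bG^{\star F^{\star}}$.

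It then remains only to remark on well-definedness, and this is inherited for free: each of the three maps is already asserted to be well defined on the level of orbits in the corresponding lemma, so the same holds for the composite. The one genuine subtlety worth flagging explicitly is that the middle map $\widecheck{\delta}_w$ depends on the data fixed once and for all in the running assumptions---namely the chosen representatives $n_w$, $n_x$ and the isomorphisms $\imath \colon (\mathbb{Q}/\mathbb{Z})_{p'} \to \mathbb{K}^{\times}$ and $\jmath \colon \mathbb{K}^{\times} \hookrightarrow \Ql$ of \cref{pa:char-to-elms-iso}---and hence so does the composite; once these are fixed, however, the bijection is canonical.

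I do not expect any obstacle here; the content of the statement lies entirely in the three lemmas being assembled, and the purpose of the corollary is purely organisational, namely to package them into the single bijection that will henceforth be used to transport statements about the pairs $(\bT,\theta) \in \mathcal{C}(\bG,F)$ underlying $\Irr(\bG^F)$ into statements about semisimple elements of $\bG^{\star F^{\star}}$.
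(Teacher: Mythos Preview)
Your proposal is correct and matches the paper's approach: the corollary is stated without proof precisely because it is an immediate composite of the three cited bijections, and your observation that the first arrow is the inverse of the map in \cref{lem:char-orb-bij} is the only point worth making explicit.
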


\begin{pa}
Now, for any parabolic subgroup $\bP \leqslant \bG$ with $F$-stable Levi complement $\bL \leqslant \bP$ we have a Deligne--Lusztig induction map $R_{\bL \subseteq \bP}^{\bG} : \Irr(\bL^F) \to \mathbb{Z}\Irr(\bG^F)$ defined as follows. Let $\bU \leqslant \bP$ be the unipotent radical of the parabolic then we consider the variety $\bY_{\bU}^{\bG} = \{x \in \bG \mid x^{-1}F(x) \in F(\bU)\}$. The group $\bG^F \times (\bL^F)^{\opp}$ acts on $\bY_{\bU}^{\bG}$ as a finite group of automorphisms via $(g,l)\cdot x = gxl$. By the functoriality of $\ell$-adic cohomology with respect to finite morphisms this endows each compactly supported $\ell$-adic cohomology group $H_c^i(\bY_{\bU}^{\bG})$ with the structure of a $(\Ql\bG^F,\Ql\bL^F)$-bimodule. We then have
%%%%
\begin{equation*}
R_{\bL \subseteq \bP}^{\bG}(\chi) = \sum_{i \in \mathbb{Z}} (-1)^i\Tr(-,H_c^i(\bY_{\bU}^{\bG}) \otimes_{\Ql \bL^F} M_{\chi}),
\end{equation*}
%%%%
where $M_{\chi}$ is a left $\Ql \bL^F$-module affording the character $\chi$ and $H_c^i(\bY_{\bU}^{\bG})$ is the $i$th compactly supported $\ell$-adic cohomology of the variety.
\end{pa}

\begin{pa}
With this we may define (rational) Lusztig series. For each pair $(\bT,\theta) \in \mathcal{C}(\bG,F)$ we have a corresponding Deligne--Lusztig virtual character $R_{\bT}^{\bG}(\theta) := R_{\bT \subseteq \bB}^{\bG}(\theta)$ where $\bB$ is some Borel subgroup containing $\bT$ (in this case $R_{\bT \subseteq \bB}^{\bG}$ does not depend upon the choice of $\bB$). If $[\bT^{\star},s] \in \mathcal{S}(\bG^{\star},F^{\star})/\bG^{\star F^{\star}}$ corresponds to $[\bT,\theta] \in \mathcal{C}(\bG,F)/\bG^F$ under the previous bijection then we denote the virtual character $R_{\bT}^{\bG}(\theta)$ by $R_{\bT^{\star}}^{\bG}(s)$. Let $s \in \bG^{\star F^{\star}}$ be a semisimple element then we define $\mathcal{E}(\bG^F,s)$ to be the set of all irreducible characters occurring in a Deligne--Lusztig virtual character $R_{\bS^{\star}}^{\bG}(t)$ with $t$ a semisimple element $\bG^{\star F^{\star}}$-conjugate to $s$.
\end{pa}

\section{Lusztig Series and Isogenies}
\begin{pa}
Let us assume that $\mathcal{G}' = (\bG',\bB_0',\bT_0')$ and $\mathcal{G}'^{\star} = (\bG'^{\star},\bB_0'^{\star},\bT_0'^{\star})$ are another set of dual triples endowed with dual Steinberg endomorphisms $F' \in \Iso(\bG')$ and $F'^{\star} \in \Iso(\bG'^{\star})$. Moreover, we assume $\sigma \in \Iso((\mathcal{G},F),(\mathcal{G}',F'))$ is an injective isogeny, i.e., a bijective homomorphism, then $\sigma$ restricts to an isomorphism of finite groups $\bG^F \to \bG'^{F'}$. In addition, we assume $\sigma^{\star} \in \Iso((\mathcal{G}'^{\star},F'^{\star}),(\mathcal{G}^{\star},F^{\star}))$ is an isogeny dual to $\sigma$ then this is also injective and induces an isomorphism of finite groups $\bG'^{\star F'^{\star}} \to \bG^{\star F^{\star}}$, c.f., \cref{lem:injective-dual-isog}. The following shows what happens to Lusztig series when identifying the irreducible characters of $\bG^F$ and $\bG'^{F'}$ through $\sigma$.
\end{pa}

\begin{prop}\label{prop:Lusztig-series-iso-image}
Assume $s \in \bG^{\star F^{\star}}$ is a semisimple element then ${}^{\sigma}\mathcal{E}(\bG^F,s) = \mathcal{E}(\bG'^{F'},\sigma^{\star-1}(s))$.
\end{prop}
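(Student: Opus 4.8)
The plan is to chase through the chain of bijections in \cref{cor:nabla-bijection} and check that each one is compatible with the action of $\sigma$ on one side and the action of $\sigma^{\star-1}$ on the dual side. Concretely, $\mathcal{E}(\bG^F,s)$ is determined by the $\bG^{\star F^{\star}}$-orbit $[\bT^{\star},s]$ via the correspondence of \cref{cor:nabla-bijection}: a character $\chi$ lies in $\mathcal{E}(\bG^F,s)$ if and only if $\chi$ appears in some $R_{\bT}^{\bG}(\theta)$ with $[\bT,\theta]$ corresponding to a pair $[\bS^{\star},t]$ with $t$ conjugate to $s$. So the statement reduces to two things: (i) that $\sigma$-transport of the data $(\bT,\theta)$ on the group side corresponds under the chain of bijections to $\sigma^{\star-1}$-transport of $(\bT^{\star},s)$ on the dual side, and (ii) that $\sigma$ commutes with Deligne--Lusztig induction from tori in the appropriate sense, i.e.\ ${}^{\sigma}R_{\bT}^{\bG}(\theta) = R_{\sigma(\bT)}^{\bG'}({}^{\sigma}\theta)$ (up to sign, but signs are $+1$ for $R_{\bT}^{\bG}$). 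The latter is the standard equivariance of $R_{\bT}^{\bG}$ under bijective isogenies — available from the isogeny $\sigma$ inducing an isomorphism of the Deligne--Lusztig varieties $\bY_{\bU}^{\bG} \to \bY_{\sigma(\bU)}^{\bG'}$ intertwining the $(\bG^F,\bL^F)$- and $(\bG'^{F'},\bL'^{F'})$-actions — so I would cite it (cf.\ Bonnaf\'e or \cite[13.22]{digne-michel:1991:representations-of-finite-groups-of-lie-type}) rather than reprove it here.

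First I would record the torus side. Since $\sigma \in \Iso((\mathcal{G},F),(\mathcal{G}',F'))$ fixes $\bB_0,\bT_0$ (i.e.\ sends them to $\bB_0',\bT_0'$) and commutes with $F$, it induces an isomorphism $W \to W'$ and, because $\sigma$ restricts to an isomorphism $\bT_0^{Fw} \to \bT_0'^{F'\sigma(w)}$, a bijection $\mathcal{C}_{\bG}(\bT_0,F) \to \mathcal{C}_{\bG'}(\bT_0',F')$, $(w,\theta) \mapsto (\sigma(w), {}^{\sigma}\theta)$, which is $W$-equivariant and hence descends to orbit sets. Compatibility with \cref{lem:char-orb-bij} is immediate from the construction of $g_w$ (one may take $\sigma(g_w)$ on the primed side since $\sigma(g_w)^{-1}F'(\sigma(g_w)) = \sigma(g_w^{-1}F(g_w)) = \sigma(F(n_w)) = F'(n_{\sigma(w)})$ for a suitable choice of representative), and this matches ${}^{\sigma}\colon \mathcal{C}(\bG,F)/\bG^F \to \mathcal{C}(\bG',F')/\bG'^{F'}$.

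Next, the dual side. By the commutative diagram of \cref{pa:induced-iso-weyl}, under the anti-isomorphisms ${}^{\star}$ the map $\sigma\colon W \to W'$ corresponds to $\sigma^{\star-1}\colon W^{\star} \to W'^{\star}$; so $\sigma^{\star-1}$ carries $\mathcal{S}_{\bG^{\star}}(\bT_0^{\star},F^{\star})$ to $\mathcal{S}_{\bG'^{\star}}(\bT_0'^{\star},F'^{\star})$ via $(w,s) \mapsto (\sigma^{\star-1}(w), \sigma^{\star-1}(s))$ — here using that $\sigma^{\star-1}$, being dual to $\sigma^{-1}$ by \cref{lem:injective-dual-isog}, restricts to $\bT_0^{\star wF^{\star}} \to \bT_0'^{\star \sigma^{\star-1}(w)F'^{\star}}$. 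The key compatibility is that this commutes with the character-to-element isomorphisms $\widecheck{\delta}_w$ of \cref{pa:char-to-elms-iso}; this follows because $\sigma$ and $\sigma^{\star}$ are dual, so the induced maps on $X$ and on $\widecheck{X}$ are transposes, hence intertwine the two norm-map constructions (the choice of $\imath,\jmath$ being fixed throughout). Thus the chain of bijections in \cref{cor:nabla-bijection} intertwines the $\sigma$-action on $\mathcal{C}(\bG,F)/\bG^F$ with the $\sigma^{\star-1}$-action on $\mathcal{S}(\bG^{\star},F^{\star})/\bG^{\star F^{\star}}$. Combining this with the equivariance of $R_{\bT}^{\bG}$ from the previous paragraph, a character $\chi$ lies in $\mathcal{E}(\bG^F,s)$ iff ${}^{\sigma}\chi$ lies in the series attached to the $\sigma^{\star-1}$-image of $[\bT^{\star},s]$, which is $\mathcal{E}(\bG'^{F'},\sigma^{\star-1}(s))$; note $\sigma^{\star-1}$ being an isomorphism of finite groups, $\bG^{\star F^{\star}}$-conjugacy is carried to $\bG'^{\star F'^{\star}}$-conjugacy, so the "up to conjugacy" in the definition of $\mathcal{E}$ causes no trouble.

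The main obstacle I expect is verifying the compatibility of $\sigma,\sigma^{\star}$ with the isomorphisms $\widecheck{\delta}_w$ — i.e.\ tracking the norm maps $N_{w^{\star}F^{\star},n}$ and the chosen roots of unity $\zeta$ through the duality — since this is where the intertwining of the concrete parametrisations (rather than just the abstract orbit sets) must be checked; everything else is either a formal diagram chase or a citation of known equivariance of Deligne--Lusztig induction under bijective isogenies.
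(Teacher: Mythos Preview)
Your proposal is correct and follows essentially the same route as the paper: cite \cite[13.22]{digne-michel:1991:representations-of-finite-groups-of-lie-type} for ${}^{\sigma}R_{\bT}^{\bG}(\theta) = R_{\sigma(\bT)}^{\bG'}({}^{\sigma}\theta)$, then chase the chain of bijections in \cref{cor:nabla-bijection} to show that $[\sigma(\bT),{}^{\sigma}\theta]$ corresponds to $[\sigma^{\star-1}(\bT^{\star}),\sigma^{\star-1}(s)]$, with the only nontrivial step being the compatibility of $\widecheck{\delta}_w$ with $\sigma$ and $\sigma^{\star-1}$. The paper carries out precisely this computation, using the identity $\widecheck{\delta}'\circ X(\sigma^{-1}) = \widecheck{X}(\sigma^{\star-1})\circ\widecheck{\delta}$ from \cref{lem:injective-dual-isog} together with the observation that a common $n$ works for both norm maps and that $N_{\sigma^{\star-1}(w^{\star})F'^{\star},n}\circ\widecheck{X}(\sigma^{\star-1}) = \widecheck{X}(\sigma^{\star-1})\circ N_{w^{\star}F^{\star},n}$ --- exactly the obstacle you flagged.
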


\begin{proof}
Let $(\bT,\theta) \in \mathcal{C}(\bG,F)$ then by \cite[13.22]{digne-michel:1991:representations-of-finite-groups-of-lie-type} we have for any $(\bT,\theta) \in \mathcal{C}(\bG,F)$ that ${}^{\sigma}R_{\bT}^{\bG}(\theta) = R_{\sigma(\bT)}^{\bG'}({}^{\sigma}\theta)$. If $[\bT,\theta]$ corresponds to $[\bT^{\star},s] \in \mathcal{S}(\bG^{\star},F^{\star})$ under the bijection in \cref{cor:nabla-bijection} then the statement follows if we can show that $[\sigma(\bT),{}^{\sigma}\theta] \in \mathcal{C}(\bG',F')$ corresponds to $[\sigma^{\star-1}(\bT^{\star}),\sigma^{\star-1}(s)] \in \mathcal{S}(\bG'^{\star},F'^{\star})$.

Let us assume that $[\bT,\theta]$ corresponds to $[w,\theta_0] \in \mathcal{C}_{\bG}(\bT_0,F)/W$ under the map in \cref{lem:char-orb-bij} then by definition we have $[\bT^{\star},s]$ corresponds to $[w^{\star},s_0] \in \mathcal{S}_{\bG^{\star}}(\bT_0^{\star},F^{\star})/W^{\star}$ under the map in \cref{lem:s/s-orb-bij} where $s_0 = \widecheck{\delta}_w(\theta_0)$. It's easy to check that $[\sigma(\bT),{}^{\sigma}\theta]$ corresponds to $[\sigma(w),{}^{\sigma}\theta_0]$ under the map in \cref{lem:char-orb-bij} and $[\sigma^{\star-1}(\bT^{\star}),\sigma^{\star-1}(s)]$ corresponds to $[\sigma^{\star-1}(w^{\star}),\sigma^{\star-1}(s_0)]$ under the map in \cref{lem:s/s-orb-bij}. Hence, we need only show that $[\sigma(w),{}^{\sigma}\theta_0]$ is mapped onto $[\sigma^{\star-1}(w^{\star}),\sigma^{\star-1}(s_0)]$ under the map in \cref{lem:bij-DL-chars}.

By definition we have $[\sigma(w),{}^{\sigma}\theta_0]$ is mapped onto $[\sigma(w)^{\star},\widecheck{\delta}_w'({}^{\sigma}\theta_0)]$ and $\sigma(w)^{\star} = \sigma^{\star-1}(w^{\star})$, c.f., \cref{pa:induced-iso-weyl}. Thus, it suffices to show that $\widecheck{\delta}_w'({}^{\sigma}\theta_0) = \sigma^{\star-1}(s_0)$. For this, assume $\chi \in X$ satisfies $\jmath\circ\Res_{\bT_0^{Fw}}^{\bT_0}(\chi) = \theta$ then $\Res_{\bT_0'^{F'\sigma(w)}}^{\bT_0'}({}^{\sigma}\chi) = {}^{\sigma}\theta$. Rewriting we have ${}^{\sigma}\chi=X(\sigma^{-1})(\chi)$ and by \cref{lem:injective-dual-isog} we have $\widecheck{\delta}'\circ X(\sigma^{-1}) = \widecheck{X}(\sigma^{\star-1})\circ \widecheck{\delta}$. From this the statement follows easily from the description of the map $\widecheck{\delta}_w'$, c.f., \cref{pa:char-to-elms-iso}, because there is a common $n>0$ such that $X((w^{\star}F^{\star})^n) = p^k = X'((\sigma^{\star -1}(w^{\star})F'^{\star})^n)$ and $N_{\sigma^{\star -1}(w^{\star})F'^{\star},n} \circ \widecheck{X}(\sigma^{\star-1}) = \widecheck{X}(\sigma^{\star-1})\circ N_{w^{\star}F^{\star},n}$.
\end{proof}

\begin{rem}
In \cite[Corollary 2.4]{navarro-tiep-turull:2008:brauer-characters-with-cyclotomic} it is stated that if $\sigma : \bG \to \bG$ is a bijective endomorphism of $\bG$ commuting with $F$ then ${}^{\sigma}\mathcal{E}(\bG^F,s) = \mathcal{E}(\bG^F,\sigma^{\star}(s))$. Unfortunately, the definition of the dual $\sigma^{\star}$ is not explicitly given in \cite{navarro-tiep-turull:2008:brauer-characters-with-cyclotomic} and the main part of the proof of this statement is left to the reader so it is difficult to reconcile that statement with \cref{prop:Lusztig-series-iso-image}. To avoid any confusion we have decided to give a complete proof of \cref{prop:Lusztig-series-iso-image}.
\end{rem}

\section{Twisted Induction and Lusztig Series}
\begin{pa}\label{pa:twisted-induction}
We will now rephrase the usual notions of Deligne--Lusztig induction and Lusztig series in a way that is suited to our purpose. Our setup here is similar to that considered in \cite[6.20]{lusztig:1984:characters-of-reductive-groups} and \cite{digne-michel:1990:lusztigs-parametrization}. For this, assume $I \subseteq \Delta$ is a subset of simple roots and let $w \in Z_W(\bL_I,F)$. The corresponding Steinberg endomorphism $Fw$ defined in \cref{pa:Steinberg-end} then stabilises the Levi subgroup $\bL_I$. Consider the variety $\bY_{I,w}^{\bG} = \{x \in \bG \mid x^{-1}F(x) \in F(n_w\bU_I)\}$, where $\bU_I \leqslant \bP_I$ is the unipotent radical of the corresponding parabolic. Following \cite[6.21]{lusztig:1984:characters-of-reductive-groups} we define a map $R_{I,w}^{\bG} : \Irr(\bL_I^{Fw}) \to \mathbb{Z}\Irr(\bG^F)$ by setting
%%%%
\begin{equation*}
R_{I,w}^{\bG}(\chi) = \sum_{i \in \mathbb{Z}} (-1)^i\Tr( - , H_c^i(\bY_{I,w}^{\bG}) \otimes_{\Ql\bL_I^{F_w}} M_{\chi}),
\end{equation*}
%%%%
where $M_{\chi}$ denotes a left $\Ql\bL_I^{Fw}$-module affording the character $\chi$. We refer to $R_{I,w}^{\bG}$ as \emph{twisted induction}.
\end{pa}

\begin{pa}\label{pa:twisted-ind-vs-DL-ind}
Assume now $g \in \bG$ is an element such that $g^{-1}F(g) = F(n_w)$ and set $\bP = {}^g\bP_I$ and $\bL = {}^g\bL_I$. Clearly we have $F \circ\imath_g = \imath_g \circ Fw$ and so $\imath_g$ induces an isomorphism $\bL_I^{Fw} \to \bL^F$ of finite groups. The following relates $R_{\bL \subseteq \bP}^{\bG}$ and $R_{I,w}^{\bG}$, where $R_{\bL \subseteq \bP}^{\bG}$ is the Deligne--Lusztig induction map corresponding to $\bL \leqslant \bP$. We note the proof is similar to that of \cite[2.1]{navarro-tiep-turull:2008:brauer-characters-with-cyclotomic}.
\end{pa}

\begin{lem}\label{lem:twisted-ind-DL-ind}
We have $R_{I,w}^{\bG}(\chi) = R_{\bL \subseteq \bP}^{\bG}({}^g\chi)$ for any $\chi \in \Irr(\bL_I^{Fw})$.
\end{lem}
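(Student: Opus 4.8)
The plan is to compare the two varieties $\bY_{I,w}^{\bG}$ and $\bY_{\bU}^{\bG}$, where $\bU = {}^g\bU_I$ is the unipotent radical of $\bP = {}^g\bP_I$, and show that the isomorphism of finite groups $\imath_g : \bL_I^{Fw} \to \bL^F$ is compatible with a $\bG^F$-equivariant isomorphism of varieties between them. Concretely, I would first spell out the right-hand side: since $g^{-1}F(g) = F(n_w)$, we have $F(\bU) = F({}^g\bU_I) = {}^{F(g)}F(\bU_I) = {}^{g}{}^{g^{-1}F(g)}F(\bU_I) = {}^{g}F(n_w\bU_In_w^{-1})$, so that conjugation by $g$ relates the defining condition $x^{-1}F(x) \in F(\bU)$ for $\bY_\bU^\bG$ to a condition on $xg$ of the shape $(xg)^{-1}F(xg) \in F(n_w\bU_I)$ (up to bookkeeping with $n_w$), which is exactly the defining condition for $\bY_{I,w}^\bG$. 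Thus right multiplication by $g$, i.e. $x \mapsto xg$, should give an isomorphism $\bY_\bU^\bG \xrightarrow{\sim} \bY_{I,w}^\bG$.

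Next I would check this isomorphism is equivariant in the appropriate sense: it commutes with the left $\bG^F$-action on both varieties, and it intertwines the right $\bL^F$-action on $\bY_\bU^\bG$ with the right $\bL_I^{Fw}$-action on $\bY_{I,w}^\bG$ via the isomorphism $\imath_g^{-1} : \bL^F \to \bL_I^{Fw}$ (note $l \in \bL^F$ acts on $x \in \bY_\bU^\bG$ by $x \mapsto xl$, and then $xl \cdot g = (xg)(g^{-1}lg)$ with $g^{-1}lg = \imath_g^{-1}(l) \in \bL_I^{Fw}$). Passing to $\ell$-adic cohomology, the functoriality for finite morphisms yields an isomorphism of $(\Ql\bG^F,\Ql\bL^F)$-bimodules $H_c^i(\bY_\bU^\bG) \cong {}^{\phantom{1}}H_c^i(\bY_{I,w}^\bG)$, where on the right the $\bL^F$-module structure is the one obtained from the $\bL_I^{Fw}$-structure by restriction along $\imath_g^{-1}$. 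Then for $\chi \in \Irr(\bL_I^{Fw})$, with $M_\chi$ a $\Ql\bL_I^{Fw}$-module affording $\chi$, the module ${}^g M_\chi$ (the vector space $M_\chi$ made a $\Ql\bL^F$-module via $\imath_g^{-1}$) affords ${}^g\chi$, and $H_c^i(\bY_\bU^\bG) \otimes_{\Ql\bL^F} {}^g M_\chi \cong H_c^i(\bY_{I,w}^\bG) \otimes_{\Ql\bL_I^{Fw}} M_\chi$ as $\Ql\bG^F$-modules; taking the alternating sum of traces gives $R_{\bL\subseteq\bP}^{\bG}({}^g\chi) = R_{I,w}^{\bG}(\chi)$.

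The main obstacle I anticipate is purely a matter of careful bookkeeping with the twisting element $n_w$: the variety $\bY_{I,w}^\bG$ is defined using $F(n_w\bU_I)$ rather than just $F(\bU_I)$, and one must verify that right translation by $g$ matches the condition exactly, using $g^{-1}F(g) = F(n_w)$ at the right spot, rather than some conjugate or inverse that would introduce a spurious automorphism. One also needs to make sure the $\Ql\bL_I^{Fw}$-module structure (resp.\ $\Ql\bL^F$-structure) on cohomology appearing in the definition of twisted (resp.\ Deligne--Lusztig) induction is the one coming from the right action as described, so that the tensor products genuinely identify; this is exactly the kind of computation done in \cite[2.1]{navarro-tiep-turull:2008:brauer-characters-with-cyclotomic}, and the reference there can be invoked to keep the argument short. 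No essentially new idea is needed beyond this translation-of-varieties trick together with the functoriality of $\ell$-adic cohomology already used to define $R_{\bL\subseteq\bP}^{\bG}$ and $R_{I,w}^{\bG}$.
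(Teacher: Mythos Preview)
Your proposal is correct and follows essentially the same approach as the paper's own proof: both use right translation by $g$, i.e., $\phi(x)=xg$, as an isomorphism of varieties $\bY_{\bU}^{\bG}\to\bY_{I,w}^{\bG}$, verify the equivariance $\phi(hxl)=h\phi(x)l^g$ for $h\in\bG^F$ and $l\in\bL^F$, and then pass to $\ell$-adic cohomology and tensor products exactly as you describe. The only (harmless) slip is that you once write ``conjugation by $g$'' where you mean right translation by $g$; otherwise your bookkeeping with $n_w$ matches the paper's computation.
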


\begin{proof}
If $\bU \leqslant \bP$ is the unipotent radical of the parabolic then clearly $\bU = {}^g\bU_I$. An easy calculation shows that the isomorphism of varieties $\phi : \bG \to \bG$, defined by $\phi(x) = xg$, maps $\bY_{\bU}^{\bG}$ isomorphically onto $\bY_{I,w}^{\bG}$. Thus $\phi$ induces an isomorphism of vector spaces $H_c^i(\bY_{\bU}^{\bG}) \to H_c^i(\bY_{I,w}^{\bG})$ for each $i \in \mathbb{Z}$. As $\phi(hxl) = h\phi(x)l^g$ for any $h \in \bG^F$, $x \in \bY_{\bU}^{\bG}$, and $l \in \bL^F$, we have $\phi$ induces an isomorphism $H_c^i(\bY_{\bU}^{\bG}) \to H_c^i(\bY_{I,w}^{\bG})^g$ of $(\Ql\bG^F,\Ql\bL^F)$-bimodules. Hence, we obtain an isomorphism of left $\Ql\bG^F$-modules
%%%%
\begin{equation*}
H_c^i(\bY_{\bU}^{\bG}) \otimes_{\Ql\bL^F} M_{\chi} \cong H_c^i(\bY_{I,w}^{\bG})^g \otimes_{\Ql\bL^F} M_{\chi} \cong H_c^i(\bY_{I,w}^{\bG}) \otimes_{\Ql\bL_I^{Fw}} {}^gM_{\chi}.
\end{equation*}
%%%%
The statement now follows from the fact that ${}^gM_{\chi} \cong M_{{}^g\chi}$.
\end{proof}

\subsection{Rational Lusztig Series}
\begin{pa}\label{pa:DL-char-identification}
We wish to now define Lusztig series in an alternative way. For this, we will need some alternative notation for Deligne--Lusztig virtual characters. If $w \in W$ and $\theta \in \Irr(\bT_0^{Fw})$ then $R_w^{\bG}(\theta) := R_{\emptyset, w}^{\bG}(\theta)$ is nothing other than a Deligne--Lusztig virtual character. More precisely, if $[\bT,\theta'] \in \mathcal{C}(\bG,F)/\bG^F$ corresponds to $[w,\theta]$ under the bijection in \cref{lem:char-orb-bij} then $R_{\bT}^{\bG}(\theta') = R_w^{\bG}(\theta)$ by \cref{lem:twisted-ind-DL-ind}. Now, if $[w^{\star},s] \in \mathcal{S}_{\bG^{\star}}(\bT_0^{\star},F^{\star})/W^{\star}$ corresponds to $[w,\theta] \in \mathcal{C}_{\bG}(\bT_0,F)/W$ under the bijection in \cref{lem:bij-DL-chars} then we will denote by $R_{w^{\star}}^{\bG}(s)$ the virtual character $R_w^{\bG}(\theta)$. Note that when we write $R_{w^{\star}}^{\bG}(s)$ we \emph{implicitly assume} that $w^{\star} \in T_{W^{\star}}(s,F^{\star})$.
\end{pa}

\begin{pa}\label{pa:coset-centraliser}
Let us now fix a semisimple element $s \in \bT_0^{\star}$ and let $\mathcal{O}$ be the $\bG^{\star}$-conjugacy class of $s$. We will aditionally assume that $T_{W^{\star}}(s,F^{\star}) \neq\emptyset$ or equivalently that $F^{\star}(\mathcal{O}) = \mathcal{O}$. By \cref{pa:prop-rat-orb-param} we have a bijection
%%%%
\begin{equation*}
\mathcal{O}^{F^{\star}}/\bG^{F^{\star}} \to H^1(F^{\star},A_{\bG^{\star}}(s,F^{\star}))
\end{equation*}
%%%%
Now, the centraliser $W^{\star}(s) = C_{N_{\bG^{\star}}(\bT_0^{\star})}(s)/\bT_0^{\star} \leqslant W^{\star}$ of $s$ in $W^{\star}$ contains the Weyl group $W^{\star\circ}(s) = N_{C_{\bG^{\star}}^{\circ}(s)}(\bT_0^{\star})/\bT_0^{\star}$ of $C_{\bG^{\star}}^{\circ}(s)$ as a normal subgroup and we will denote by $\mathcal{A}_{W^{\star}}(s,F^{\star})$ the set of cosets $W^{\star\circ}(s) \backslash T_{W^{\star}}(s,F^{\star})$. The group $W^{\star}(s)$ acts on $T_{W^{\star}}(s,F^{\star})$ by $F^{\star}$-conjugation and this induces an action of $\mathcal{A}_{W^{\star}}(s) := W^{\star}(s)/W^{\star\circ}(s)$ on $\mathcal{A}_{W^{\star}}(s,F^{\star})$; we denote the resulting set of orbits by $H^1(F^{\star},\mathcal{A}_{W^{\star}}(s,F^{\star}))$. A standard argument shows that the map $\mathcal{A}_{W^{\star}}(s,F^{\star}) \to A_{\bG^{\star}}(s,F^{\star})$ defined by $W^{\star\circ}(s)w \mapsto C_{\bG^{\star}}^{\circ}(s)n_w$ is a bijection and, moreover, this induces a bijection $H^1(F^{\star},\mathcal{A}_{W^{\star}}(s,F^{\star})) \to H^1(F^{\star},A_{\bG^{\star}}(s,F^{\star}))$. With this we define for any $a^{\star} \in \mathcal{A}_{W^{\star}}(s,F^{\star})$ a set
%%%%
\begin{equation*}
\mathcal{E}_0(\bG^F,s,a^{\star}) = \{\chi \in \Irr(\bG^F) \mid \langle \chi, R_{w^{\star}}^{\bG}(s)\rangle_{\bG^F} \neq 0 \text{ and }W^{\star\circ}(s)w^{\star} \sim_{F^{\star}} a^{\star}\},
\end{equation*}
%%%%
where $\sim_{F^{\star}}$ denotes the equivalence relation induced by the action of $\mathcal{A}_{W^{\star}}(s)$ via $F^{\star}$-conjugacy. The following shows that this set is a Lusztig series.
\end{pa}

\begin{lem}\label{lem:DL-geo-series-compare}
Assume $t = {}^gs \in \bG^{\star F^{\star}}$ is such that $\overline{g^{-1}F^{\star}(g)} \in A_{\bG^{\star}}(s,F^{\star})$ corresponds to $a^{\star} \in \mathcal{A}_{W^{\star}}(s,F^{\star})$ then $\mathcal{E}(\bG^F,t) = \mathcal{E}_0(\bG^F,s,a^{\star})$.
\end{lem}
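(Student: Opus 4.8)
The plan is to unwind both sides into the language of twisted induction from the reference torus $\bT_0^\star$, as set up in \cref{pa:twisted-induction,pa:DL-char-identification,pa:coset-centraliser}, and then compare. First I would rewrite $\mathcal{E}(\bG^F,t)$ as a union of sets of constituents of the virtual characters $R_{w^\star}^{\bG}(s)$. By definition $\mathcal{E}(\bG^F,t)$ is the set of irreducible constituents of the $R_{\bS^\star}^{\bG}(t')$ as $\bS^\star$ ranges over $F^\star$-stable maximal tori of $\bG^\star$ and $t'$ over the $\bG^{\star F^\star}$-conjugates of $t$ lying in $\bS^{\star F^\star}$. Using \cref{lem:s/s-orb-bij}, every such pair $(\bS^\star,t')$ is $\bG^{\star F^\star}$-conjugate to one of the form $({}^{g_{w^\star}}\bT_0^\star,{}^{g_{w^\star}}s'')$ with $[w^\star,s'']\in\mathcal{S}_{\bG^\star}(\bT_0^\star,F^\star)/W^\star$; since the virtual character on the dual side depends only on the $\bG^{\star F^\star}$-orbit (equivalently, on the $\bG^F$-orbit on the group side, via \cref{cor:nabla-bijection,lem:twisted-ind-DL-ind}), we get $R_{\bS^\star}^{\bG}(t')=R_{w^\star}^{\bG}(s'')$ in the notation of \cref{pa:DL-char-identification}. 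As $s''$ lies in $\bT_0^\star$ and is $\bG^\star$-conjugate to $s$, a standard fact produces $z\in W^\star$ with ${}^{n_z}s''=s$, and replacing $(w^\star,s'')$ by $z\cdot(w^\star,s'')$ we may assume $s''=s$, in which case $w^\star\in T_{W^\star}(s,F^\star)$ automatically. Thus $\mathcal{E}(\bG^F,t)$ is the union of the constituents of $R_{w^\star}^{\bG}(s)$ over exactly those $w^\star\in T_{W^\star}(s,F^\star)$ for which ${}^{g_{w^\star}}s$ lies in the $\bG^{\star F^\star}$-conjugacy class of $t$ inside $\mathcal{O}^{F^\star}$.

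Second, I would translate the condition on $w^\star$ into the combinatorial one used to define $\mathcal{E}_0$. Since $g_{w^\star}^{-1}F^\star(g_{w^\star})=n_{w^\star}$, and $w^\star\in T_{W^\star}(s,F^\star)$ forces $n_{w^\star}\in T_{\bG^\star}(s,F^\star)$, \cref{pa:prop-rat-orb-param} identifies the rational orbit of ${}^{g_{w^\star}}s$ with the class of $\overline{n_{w^\star}}$ in $H^1(F^\star,A_{\bG^\star}(s,F^\star))$; similarly the rational orbit of $t={}^gs$ is the class of $\overline{g^{-1}F^\star(g)}$. Under the bijection $\mathcal{A}_{W^\star}(s,F^\star)\to A_{\bG^\star}(s,F^\star)$, $W^{\star\circ}(s)w^\star\mapsto\overline{n_{w^\star}}$, of \cref{pa:coset-centraliser} and the induced bijection on sets of $F^\star$-conjugacy classes, the statement ``${}^{g_{w^\star}}s$ is $\bG^{\star F^\star}$-conjugate to $t$'' is equivalent to ``$W^{\star\circ}(s)w^\star\sim_{F^\star}a^\star$'', where $a^\star$ is, by hypothesis, the element of $\mathcal{A}_{W^\star}(s,F^\star)$ corresponding to $\overline{g^{-1}F^\star(g)}$. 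Feeding this back into the first step, $\mathcal{E}(\bG^F,t)$ is precisely the union of the constituents of $R_{w^\star}^{\bG}(s)$ over $w^\star\in T_{W^\star}(s,F^\star)$ with $W^{\star\circ}(s)w^\star\sim_{F^\star}a^\star$, which is $\mathcal{E}_0(\bG^F,s,a^\star)$ by definition.

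The content beyond unwinding definitions is modest, and I expect the main care to go into the bookkeeping of the first paragraph: checking that reducing a pair $(\bS^\star,t')$ to $({}^{g_{w^\star}}\bT_0^\star,{}^{g_{w^\star}}s)$ and normalising $s''$ to $s$ by a Weyl element genuinely lands in $T_{W^\star}(s,F^\star)$, loses no constituents, and respects well-definedness of $R_{w^\star}^{\bG}(s)$ on $W^\star$-orbits of pairs. The only genuinely substantive point is the compatibility, inside \cref{pa:prop-rat-orb-param}, of the rational-orbit invariant $g_{w^\star}\cdot s\mapsto\overline{g_{w^\star}^{-1}F^\star(g_{w^\star})}$ with the bijection $\mathcal{A}_{W^\star}(s,F^\star)\to A_{\bG^\star}(s,F^\star)$; but this is immediate once one observes $\overline{g_{w^\star}^{-1}F^\star(g_{w^\star})}=\overline{n_{w^\star}}$ and that $\overline{n_{w^\star}}$ is by construction the image of $W^{\star\circ}(s)w^\star$ under that bijection.
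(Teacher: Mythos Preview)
Your argument is correct. Both proofs reduce to the same comparison of rational-orbit parameterisations, but the organisation differs. The paper packages your first paragraph into a single application of the abstract framework of \cref{pa:setup,pa:prop-rat-orb-param}: it introduces the transitive $\bG^\star$-set $\mathcal{X}$ of pairs $(\bT^\star,t')$ with $t'$ conjugate to $s$, takes $x_0=(\bT_0^\star,s)$, computes $\Stab_{\bG^\star}(x_0)=C_{N_{\bG^\star}(\bT_0^\star)}(s)$ with connected component $\bT_0^\star$, and reads off $A_{\bG^\star}(x_0,F^\star)=T_{W^\star}(s,F^\star)$ directly. A commutative diagram with surjections $\mathcal{X}^{F'}/\bG^{\star F^\star}\to\mathcal{O}^{F^\star}/\bG^{\star F^\star}$ and $H^1(F^\star,T_{W^\star}(s,F^\star))\to H^1(F^\star,\mathcal{A}_{W^\star}(s,F^\star))$ then replaces your element-chasing in the second paragraph. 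Your route is more explicit and makes the normalisation step (replacing $s''$ by $s$ via a Weyl element) visible, which is pedagogically useful; the paper's route buys brevity by letting \cref{pa:prop-rat-orb-param} absorb that bookkeeping, at the cost of introducing the auxiliary set $\mathcal{X}$.
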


\begin{proof}
Let $\mathcal{X}$ denote the set of all pairs $(\bT^{\star},t')$ consisting of a maximal torus $\bT^{\star} \leqslant \bG^{\star}$ and a semisimple element $t' \in \bT^{\star}$ such that $t'$ is $\bG^{\star}$-conjugate to $s$. The set $\mathcal{X}$ is a $\bG^{\star}$-set via the action $g\cdot(\bT^{\star},t') = ({}^g\bT^{\star},{}^gt')$ and we have a map $F' : \mathcal{X} \to \mathcal{X}$ defined by $F'(\bT^{\star},t') = (F^{\star}(\bT^{\star}),F^{\star}(t'))$. This action is transitive, so we're in the situation of \cref{pa:setup}. Clearly $x_0 = (\bT_0^{\star},s) \in \mathcal{X}$ and so we have a bijection $\mathcal{X}^{F'}/\bG^F \to H^1(F^{\star},A_{\bG^{\star}}(x_0,F^{\star}))$ as in \cref{pa:prop-rat-orb-param}. The stabiliser of $x_0$ is $C_{N_{\bG^{\star}}(\bT_0^{\star})}(s)$ whose connected component is $\bT_0^{\star}$. As $T_{\bG^{\star}}(x_0,F^{\star}) = T_{N_{\bG^{\star}}(\bT_0^{\star})}(x_0,F^{\star})$ we must have $A_{\bG^{\star}}(x_0,F^{\star}) = T_{W^{\star}}(s,F^{\star})$. As before this bijection is compatible with the $W^{\star}(s)$-action, resp., $A_{\bG^{\star}}(x_0)$-action, by $F^{\star}$-conjugation. Hence we obtain a bijection $H^1(F^{\star},T_{W^{\star}}(s,F^{\star})) \to H^1(F^{\star},A_{\bG^{\star}}(s,F^{\star}))$. With this one readily checks we have a commutative diagram
%%%%
\begin{equation*}
\begin{tikzcd}
\mathcal{X}^{F'}/\bG^F \arrow[two heads]{d}{\alpha}\arrow{r}{} & H^1(F^{\star},A_{\bG^{\star}}(x_0,F^{\star})) \arrow{r}{}\arrow[two heads]{d}{\beta} & H^1(F^{\star},T_{W^{\star}}(s,F^{\star})) \arrow[two heads]{d}{\gamma}\\
\mathcal{O}^F/\bG^F \arrow{r}{} & \arrow{r}{} H^1(F^{\star},A_{\bG^{\star}}(s,F^{\star})) & H^1(F^{\star},\mathcal{A}_{W^{\star}}(s,F^{\star}))
\end{tikzcd}
\end{equation*}
%%%%
where $\alpha(\bT^{\star},t') = t'$, $\beta(\bT_0^{\star}n) = C_{\bG^{\star}}^{\circ}(s)n$, and $\gamma(w^{\star}) = W^{\star\circ}(s)w^{\star}$. The statement now follows from the definitions.
\end{proof}

\begin{pa}\label{pa:disjointness}
Let us denote by $\mathcal{T}_{\bG^{\star}}(\bT_0^{\star},F^{\star})$ the set of all pairs $(s,a^{\star})$ such that $s \in \bT_0^{\star}$, $T_{W^{\star}}(s,F^{\star}) \neq \emptyset$, and $a^{\star} \in \mathcal{A}_{W^{\star}}(s,F^{\star})$. The group $W^{\star}$ acts on this set in the following way. Consider the conjugate $s' = {}^{n_{x^{\star}}}s$ with $x^{\star} \in W^{\star}$ then we have $C_{\bG^{\star}}(s') = {}^{n_{x^{\star}}}C_{\bG^{\star}}(s)$ and $C_{\bG^{\star}}^{\circ}(s') = {}^{n_{x^{\star}}}C_{\bG^{\star}}^{\circ}(s)$. In particular, we must have $W^{\star}(s') = {}^{x^{\star}}W^{\star}(s)$ and $W^{\star\circ}(s') = {}^{x^{\star}}W^{\star\circ}(s)$. The map $\phi : T_{W^{\star}}(s,F^{\star}) \to T_{W^{\star}}(s',F^{\star})$ defined by $\phi(w^{\star}) = x^{\star}w^{\star}F^{\star}(x^{\star-1})$ is a bijection which induces a well-defined bijection $\mathcal{A}_{W^{\star}}(s,F^{\star}) \to \mathcal{A}_{W^{\star}}(s',F^{\star})$ because $W^{\star\circ}(s') = {}^{x^{\star}}W^{\star\circ}(s)$. Moreover, this map is compatible with the actions of $\mathcal{A}_{W^{\star}}(s)$ and $\mathcal{A}_{W^{\star}}(s')$ by $F^{\star}$-conjugacy in the sense that $\phi(y^{\star}\cdot w^{\star}) = ({}^{x^{\star}}y^{\star}) \cdot \phi(w^{\star})$. With this we define a $W^{\star}$-action on $\mathcal{T}_{\bG^{\star}}(\bT_0^{\star},F^{\star})$ by setting
%%%%
\begin{equation*}
x^{\star} \cdot (s,a^{\star}) = ({}^{n_{x^{\star}}}s,x^{\star}a^{\star}F^{\star}(x^{\star-1})).
\end{equation*}
%%%%
From the proof of \cref{lem:DL-geo-series-compare}, together with the usual disjointness statment for Lusztig series, we thus conclude that we have a disjoint union
%%%%
\begin{equation*}
\Irr(\bG^F) = \bigsqcup_{[s,a^{\star}] \in \mathcal{T}_{\bG^{\star}}(\bT_0^{\star},F^{\star})/W^{\star}} \mathcal{E}_0(\bG^F,s,a^{\star}).
\end{equation*}
%%%%
\end{pa}

\begin{rem}
We note that the orbits $\mathcal{T}_{\bG^{\star}}(\bT_0^{\star},F^{\star})/W^{\star}$ are in bijection with the $\bG^{\star F^{\star}}$-conjugacy classes of rational semisimple elements.
\end{rem}

\subsection{Geometric Series, Cells, and Families}
\begin{pa}\label{pa:geometric-series}
For any $s \in \bT_0^{\star}$ we define a set
%%%%
\begin{equation*}
\mathcal{E}_0(\bG^F,s) = \{\chi \in \Irr(\bG^F) \mid \langle \chi, R_{w^{\star}}^{\bG}(s)\rangle_{\bG^F} \neq 0 \text{ and }w^{\star} \in T_{W^{\star}}(s,F^{\star})\}.
\end{equation*}
%%%%
Note that $\mathcal{E}_0(\bG^F,s) = \emptyset$ unless $T_{W^{\star}}(s,F^{\star}) \neq \emptyset$ and moreover we have
%%%%
\begin{equation*}
\mathcal{E}_0(\bG^F,s) = \bigsqcup_{a^{\star} \in H^1(F^{\star},\mathcal{A}_{W^{\star}}(s,F^{\star}))}\mathcal{E}_0(\bG^F,s,a^{\star}).
\end{equation*}
%%%%
The set $\mathcal{E}_0(\bG^F,s)$ is a \emph{geometric} Lusztig series. If $W^{\star}(s) = W^{\star\circ}(s)$ then $\mathcal{A}_{W^{\star}}(s,F^{\star})$ has cardinality $1$ and the geometric Lusztig series is a rational Lusztig series. In \cite[16.4]{lusztig:1985:character-sheaves} Lusztig has decomposed the group $W^{\star}(s)$ as a disjoint union of two-sided Kazhdan--Lusztig cells. Moreover, to each such cell $\mathfrak{C} \subseteq W^{\star}(s)$ we have a corresponding family of irreducible characters $\Irr(W^{\star}(s) \mid \mathfrak{C}) \subseteq \Irr(W^{\star}(s))$ and this yields a disjoint union
%%%%
\begin{equation*}
\Irr(W^{\star}(s)) = \bigsqcup_{\mathfrak{C} \subseteq W^{\star}(s)}\Irr(W^{\star}(s) \mid \mathfrak{C}).
\end{equation*}
%%%%
We note that each family contains a unique special representation $E_{\mathfrak{C}} \in \Irr(W^{\star}(s) \mid \mathfrak{C})$.
\end{pa}

\begin{pa}
Let us fix an element $w_1^{\star} \in T_{W^{\star}}(s,F^{\star})$ then the map $w_1^{\star}F^{\star}$ defines an automorphism of $W^{\star}(s)$ which permutes the two-sided Kazhdan--Lusztig cells and thus the families. We denote by $\widetilde{W}^{\star}(s)$ the semidirect product $W^{\star}(s) \rtimes \langle w_1^{\star}F^{\star}\rangle$. If $E \in \Irr(W^{\star}(s))^{w_1^{\star}F^{\star}}$ is a $w_1^{\star}F^{\star}$-invariant irreducible character then we denote by $\widetilde{E} \in \Irr(\widetilde{W}^{\star}(s))$ an extension of $E$; this exists because the quotient $\widetilde{W}^{\star}(s)/W^{\star}(s)$ is cyclic. Associated to $\widetilde{E}$ we have a corresponding $\bG^F$-class function
%%%%
\begin{equation*}
\mathcal{R}^{\bG}_{\bT_0^{\star}}(\widetilde{E},s) = \frac{1}{|W^{\star}(s)|}\sum_{w^{\star} \in W^{\star}(s)}\widetilde{E}(w^{\star}w_1^{\star}F^{\star})R_{w^{\star}w_1^{\star}}^{\bG}(s).
\end{equation*}
%%%%
Moreover, if $\mathfrak{C} \subseteq W^{\star}(s)$ is a two-sided cell then we define a set
%%%%
\begin{equation*}
\mathcal{E}_0(\bG^F,s,\mathfrak{C}) = \{ \chi \in \Irr(\bG^F) \mid \langle \chi, \mathcal{R}^{\bG}_{\bT_0^{\star}}(\widetilde{E},s)\rangle_{\bG^F} \neq 0 \text{ and }E \in \Irr(W^{\star}(s) \mid \mathfrak{C})^{w_1^{\star}F^{\star}}\}.
\end{equation*}
%%%%
As is explained in \cite[14.7]{taylor:2016:GGGRs-small-characteristics} this yields a disjoint union
%%%%
\begin{equation*}
\mathcal{E}_0(\bG^F,s) = \bigsqcup_{\mathfrak{C} \subseteq W^{\star}(s)} \mathcal{E}_0(\bG^F,s,\mathfrak{C}),
\end{equation*}
%%%%
where the union is taken over all the $w_1^{\star}F^{\star}$-stable two-sided cells.
\end{pa}

\subsection{Twisted Induction induces a Bijection}
\begin{definition}
If $A \subseteq \bG^{\star}$ is a subset containing a maximal torus of $
\bG^{\star}$ then we define the \emph{Levi cover} of $A$ to be the intersection of all Levi subgroups of $\bG^{\star}$ containing $A$.
\end{definition}

\begin{pa}
Note that the Levi cover is a Levi subgroup of $\bG^{\star}$ because $A$ contains a maximal torus of $\bG^{\star}$ and the intersection of two Levi subgroups containing a common maximal torus is a Levi subgroup, see \cite[2.1(i)]{digne-michel:1991:representations-of-finite-groups-of-lie-type}. Moreover, the Levi cover is clearly the unique minimal Levi subgroup containing $A$ with respect to inclusion. Hence, if $s \in \bG^{\star}$ is a semisimple element then this implies that $s$ is quasi-isolated in the Levi cover of $C_{\bG^{\star}}(s)$. Now, assume $s \in \bT_0^{\star}$ then the Levi cover contains $\bT_0^{\star}$ so it is conjugate to a standard Levi subgroup $\bL_I^{\star}$, for some subset $I \subseteq \Delta$, by an element of $N_{\bG^{\star}}(\bT_0^{\star})$. In particular, after possibly replacing $s$ by an $N_{\bG^{\star}}(\bT_0^{\star})$-conjugate we may assume that the Levi cover of $C_{\bG^{\star}}(s)$ is a standard Levi subgroup.
\end{pa}

\begin{pa}\label{pa:Levi-stable}
Let us continue our assumption that $s \in \bT_0^{\star}$ and the Levi cover of $C_{\bG^{\star}}(s)$ is a standard Levi subgroup. If $w^{\star} \in T_{W^{\star}}(s,F^{\star})$ then ${}^{n_{w^{\star}}}F^{\star}(s) = s$ so ${}^{n_{w^{\star}}}F^{\star}(C_{\bG^{\star}}(s)) = C_{\bG^{\star}}(s)$. A subgroup $\bL^{\star} \leqslant \bG^{\star}$ is a Levi subgroup containing $\bT_0^{\star}$ if and only if ${}^{n_{w^{\star}}}F^{\star}(\bL^{\star})$ is a Levi subgroup containing $\bT_0^{\star}$. Hence if $\bL_I^{\star}$ is the Levi cover of $C_{\bG^{\star}}(s)$ then we must have ${}^{n_{w^{\star}}}F^{\star}(\bL_I^{\star}) = \bL_I^{\star}$, or equivalently $w^{\star} \in T_{W^{\star}}(\bL_I^{\star},F^{\star}) = T_{W^{\star}}(W_I^{\star},F^{\star})$. Hence, we have an embedding $T_{W^{\star}}(s,F^{\star}) \hookrightarrow T_{W^{\star}}(W_I^{\star},F^{\star})$. Each element of $T_{W^{\star}}(W_I^{\star},F^{\star})$ may be factored uniquely as a product $x^{\star}w_1^{\star}$ with $x^{\star} \in W_I^{\star}$ and $w_1^{\star} \in T_{W^{\star}}(I^{\star},F^{\star})$, see the arguments used in the proof of \cite[4.2]{digne-michel:1991:representations-of-finite-groups-of-lie-type}. Hence we have a bijection $W_I^{\star} \backslash T_{W^{\star}}(W_I^{\star},F^{\star}) \to T_{W^{\star}}(I^{\star},F^{\star})$ and thus a well-defined map
%%%%
\begin{equation}\label{eq:bijection-dual}
\mathcal{A}_{W^{\star}}(s,F^{\star}) \to W_I^{\star}\backslash T_{W^{\star}}(W_I^{\star},F^{\star}) \to T_{W^{\star}}(I^{\star},F^{\star}),
\end{equation}
%%%%
where the first map is given by $W^{\star\circ}(s)x^{\star} \mapsto W_I^{\star}x^{\star}$.
\end{pa}

\begin{thm}[Lusztig]\label{thm:Lusztig-bijection}
Assume $s \in \bT_0^{\star}$ is a semisimple element such that $T_{W^{\star}}(s,F^{\star}) \neq\emptyset$ and $\bL_I^{\star}$ is the Levi cover of $C_{\bG^{\star}}(s)$ with $I \subseteq \Delta$. Let $w_1^{\star} \in T_{W^{\star}}(I^{\star},F^{\star})$ then the map $\mathcal{A}_{W_I^{\star}}(s,w_1^{\star}F^{\star}) \to \mathcal{A}_{W^{\star}}(s,F^{\star})$ given by $a^{\star} \mapsto a^{\star}w_1^{\star}$ is a bijection and the map $(-1)^{\ell(w_1)} R_{I,w_1}^{\bG} : \Irr(\bL_I^{Fw_1}) \to \mathbb{Z}\Irr(\bG^F)$ gives bijections
%%%%
\begin{equation*}
\mathcal{E}_0(\bL_I^{Fw_1},s)\to\mathcal{E}_0(\bG^F,s) \qquad\text{and}\qquad \mathcal{E}_0(\bL_I^{Fw_1},s,a^{\star}) \to\mathcal{E}_0(\bG^F,s,a^{\star}w_1^{\star})
\end{equation*}
%%%%
for any $a^{\star} \in \mathcal{A}_{W_I^{\star}}(s,w_1^{\star}F^{\star})$. Here $\ell : W \to \mathbb{Z}_{\geqslant 0}$ denotes the length function of $W$ determined by the reflections associated to the simple roots $\Delta$.
\end{thm}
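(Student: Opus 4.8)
The plan is to reduce the twisted induction $R_{I,w_1}^{\bG}$ to an ordinary Deligne--Lusztig induction, after which the assertion becomes a Jordan-decomposition-type statement, and then to establish that statement by combining transitivity of Deligne--Lusztig induction, an isometry argument via the Mackey formula, and a cardinality count; the combinatorial bijection of $\mathcal{A}$-sets I would dispose of first as bookkeeping. For the latter, note that since $\bL_I^{\star}$ is the Levi cover of $C_{\bG^{\star}}(s)$ we have $C_{\bG^{\star}}(s) \subseteq \bL_I^{\star}$, hence $C_{N_{\bG^{\star}}(\bT_0^{\star})}(s) \subseteq N_{\bL_I^{\star}}(\bT_0^{\star})$ and so $W^{\star\circ}(s) \leqslant W^{\star}(s) \leqslant W_I^{\star}$; in particular $\mathcal{A}_{W^{\star}}(s)$ may be computed inside $W_I^{\star}$. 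By \cref{pa:Levi-stable} one has $T_{W^{\star}}(s,F^{\star}) \subseteq T_{W^{\star}}(W_I^{\star},F^{\star})$, and as $T_{W^{\star}}(s,F^{\star})$ is a single right coset of $W^{\star}(s) \leqslant W_I^{\star}$ every element of it has the same $T_{W^{\star}}(I^{\star},F^{\star})$-component $w_1^{\star}$ under the factorisation recalled in \cref{pa:Levi-stable}; for this $w_1^{\star}$ one checks directly that $x^{\star} \mapsto x^{\star}w_1^{\star}$ restricts to a bijection $T_{W_I^{\star}}(s,w_1^{\star}F^{\star}) \to T_{W^{\star}}(s,F^{\star})$ intertwining the $F^{\star}$-conjugation actions of $\mathcal{A}_{W^{\star}}(s)$, and passing to $W^{\star\circ}(s)$-cosets and then to $H^1$-sets yields the asserted bijections.

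Next I would pass to Deligne--Lusztig induction. Choosing $g \in \bG$ with $g^{-1}F(g) = F(n_{w_1})$ and setting $\bP = {}^{g}\bP_I$, $\bL = {}^{g}\bL_I$ (an $F$-stable Levi of the parabolic $\bP$), \cref{lem:twisted-ind-DL-ind} gives $R_{I,w_1}^{\bG}(\chi) = R_{\bL \subseteq \bP}^{\bG}({}^{g}\chi)$, while $\imath_g$ identifies $\mathcal{E}_0(\bL_I^{Fw_1},s)$ and its pieces $\mathcal{E}_0(\bL_I^{Fw_1},s,a^{\star})$ with the corresponding series of $\bL^F$ attached to a semisimple $t \in \bL^{\star}$ that is $\bG^{\star}$-conjugate to $s$, so that $C_{\bG^{\star}}(t) \subseteq \bL^{\star}$ (here $\bL^{\star}$ is a Levi of $\bG^{\star}$ dual to $\bL$). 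Thus it suffices to show that $(-1)^{\ell(w_1)}R_{\bL \subseteq \bP}^{\bG}$ induces the required bijections. By transitivity of Deligne--Lusztig induction, $R_{\bL \subseteq \bP}^{\bG} \circ R_{\bT}^{\bL} = R_{\bT}^{\bG}$ for every $F$-stable maximal torus $\bT \leqslant \bL$; comparing this with the description of Lusztig series through the virtual characters $R_{w^{\star}}^{\bG}(t)$ (\cref{cor:nabla-bijection,pa:geometric-series}), and using \cref{lem:DL-geo-series-compare} to follow the rational/cell label through the factorisation $w^{\star} = x^{\star}w_1^{\star}$ of the first paragraph, one sees that $R_{\bL \subseteq \bP}^{\bG}$ maps $\mathbb{Z}\mathcal{E}_0(\bL^F,t)$ into $\mathbb{Z}\mathcal{E}_0(\bG^F,t)$ and $\mathbb{Z}\mathcal{E}_0(\bL^F,t,a^{\star})$ into $\mathbb{Z}\mathcal{E}_0(\bG^F,t,a^{\star}w_1^{\star})$.

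The heart of the matter is then that $(-1)^{\ell(w_1)}R_{\bL \subseteq \bP}^{\bG}$ is an isometry on $\mathbb{Z}\mathcal{E}(\bL^F,t)$. Computing $\langle R_{\bL \subseteq \bP}^{\bG}\chi, R_{\bL \subseteq \bP}^{\bG}\chi' \rangle_{\bG^F}$ for $\chi,\chi' \in \mathcal{E}(\bL^F,t)$ by the Mackey formula, the terms indexed by double cosets $x$ with ${}^{x}\bL \neq \bL$ involve Deligne--Lusztig restrictions to proper Levi subgroups of $\bL$ whose duals cannot contain a $\bG^{\star}$-conjugate of $C_{\bG^{\star}}(t)$, so by the disjointness of Lusztig series these terms vanish, and the remaining terms reassemble to $\langle\chi,\chi'\rangle_{\bL^F}$. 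Hence $(-1)^{\ell(w_1)}R_{\bL \subseteq \bP}^{\bG}$ carries the orthonormal set $\mathcal{E}(\bL^F,t)$ injectively to norm-one elements of $\mathbb{Z}\mathcal{E}(\bG^F,t)$, each of which is therefore a single irreducible character of $\mathcal{E}(\bG^F,t)$ — the global sign being correct after the $(-1)^{\ell(w_1)}$-twist, as in the statement, which one verifies on uniform projections. Finally, $C_{\bG^{\star}}(t) \subseteq \bL^{\star}$ forces $C_{\bL^{\star}}^{\circ}(t) = C_{\bG^{\star}}^{\circ}(t)$ and $W^{\star}(t)$ to be the same whether computed in $\bL^{\star}$ or in $\bG^{\star}$, so Lusztig's parametrisation of the characters in a geometric Lusztig series gives $|\mathcal{E}_0(\bL^F,t)| = |\mathcal{E}_0(\bG^F,t)|$; an injective norm-preserving map between equinumerous finite sets is onto, which yields the first bijection, and restricting to a fixed label $a^{\star}$ and invoking the $\mathcal{A}$-bijection of the first paragraph yields the second.

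I expect the main obstacle to be the isometry statement: showing that all the off-diagonal Mackey terms vanish requires the Mackey formula in the generality needed here together with the structural input that a proper Levi of $\bL$ cannot support the series $t$ precisely because $C_{\bG^{\star}}(t)$ already lies in $\bL^{\star}$; pinning down the global sign and carefully matching the labels $a^{\star}$ and $a^{\star}w_1^{\star}$ throughout is also delicate. An alternative route that sidesteps the Mackey-formula subtleties is to appeal to Lusztig's original proof of this result via character sheaves.
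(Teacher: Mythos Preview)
Your reduction to ordinary Deligne--Lusztig induction via \cref{lem:twisted-ind-DL-ind} is exactly what the paper does, and your handling of the $\mathcal{A}$-set bijection is a clean account of what the paper leaves implicit. The divergence is in the core step: where you sketch a self-contained proof that $(-1)^{\ell(w_1)}R_{\bL\subseteq\bP}^{\bG}$ is a bijection $\mathcal{E}(\bL^F,t)\to\mathcal{E}(\bG^F,t)$ via the Mackey formula and a cardinality count, the paper simply invokes \cite[13.25(ii)]{digne-michel:1991:representations-of-finite-groups-of-lie-type} for the geometric series and \cite[Th\'eor\`eme 11.10]{bonnafe:2006:sln} for the rational refinement. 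Your argument is essentially how those cited results are proved, so you are reconstructing the literature rather than citing it; that is fine as an exercise, but the paper's route is shorter and avoids the delicate points you yourself flag. In particular, your description of why the off-diagonal Mackey terms vanish is not quite the right mechanism: it is not that the smaller Levi ``cannot support the series $t$'' (it can), but rather that for $x\notin N_{\bG^F}(\bL)$ the conjugate ${}^xt$ fails to be $\bL^{\star}$-conjugate to $t$ in the required way, which is where minimality of the Levi cover actually enters in the cited proofs. The sign identification $(-1)^{\ell(w_1)}=\varepsilon_{\bG}\varepsilon_{\bL}$ and the cardinality equality $|\mathcal{E}_0(\bL^F,t)|=|\mathcal{E}_0(\bG^F,t)|$ are also not entirely trivial and would need justification if you pursue the self-contained route.
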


\begin{proof}
Let $g \in \bG$ be such that $g^{-1}F(g) = F(n_{w_1})$ and set $\bP = {}^g\bP_I$ and $\bL = {}^g\bL_I$ then $\bL \leqslant \bP$ is an $F$-stable Levi complement of the parabolic subgroup $\bP$. The anti-isomorphism ${}^{\star} : W \to W^{\star}$ maps $Z_W(I,F)$ onto $T_{W^{\star}}(I^{\star},F^{\star})$, so the endomorphism $Fw_1$ stabilises $\bB_I$ and $\bT_0$. In particular, if $\mathcal{L} = (\bL,{}^g\bB_I,{}^g\bT_0)$ then $\imath_g \in \Iso((\mathcal{L}_I,Fw_1),(\mathcal{L},F))$ is an isomorphism. Dually, let us choose an element $g^{\star} \in \bG^{\star}$ such that $g^{\star-1}F^{\star}(g^{\star}) = n_{w_1^{\star}}$ and set $\bP^{\star} := {}^{g^{\star}}\bP_I^{\star}$ and $\bL^{\star} := {}^{g^{\star}}\bL_I^{\star}$ then $\bL^{\star} \leqslant \bP^{\star}$ is an $F^{\star}$-stable Levi complement of $\bP^{\star}$. Setting $\mathcal{L}^{\star} = (\bL^{\star},{}^{g^{\star}}\bB_I^{\star},{}^{g^{\star}}\bT_0^{\star})$ we have the map $\imath_{g^{\star}}^{-1} \in \Iso((\mathcal{L}^{\star},F^{\star}),(\mathcal{L}_I^{\star},w_1^{\star}F^{\star}))$ is an isomorphism dual to $\imath_g$.

Assume $a^{\star} = W^{\star\circ}(s)z^{\star} \in \mathcal{A}_{W_I^{\star}}(s,w_1^{\star}F^{\star})$ and let $t = {}^ls \in \bL_I^{\star w_1^{\star} F^{\star}}$ be a conjugate of $s$ with $l \in \bL_I^{\star}$ such that $l^{-1}{}^{n_{w_1^{\star}}}F^{\star}(l) = n_{z^{\star}}$. We then have a map
%%%%
\begin{equation}\label{eq:map-decomp}
\mathcal{E}_0(\bL_I^{Fw_1},s,a^{\star}) = \mathcal{E}(\bL_I^{Fw_1},t) \xrightarrow{-\circ\imath_g^{-1}} \mathcal{E}(\bL^{F},\imath_{g^{\star}}(t)) \xrightarrow{R_{\bL \subseteq \bP}^{\bG}} \mathcal{E}(\bG^{F},\imath_{g^{\star}}(t)) = \mathcal{E}_0(\bG^{F},s,a^{\star}w_1^{\star}).
\end{equation}
%%%%
The first and last identifications between Lusztig series is simply \cref{lem:DL-geo-series-compare} together with the computation
%%%%
\begin{equation*}
(g^{\star}l)^{-1}F^{\star}(g^{\star}l) = l^{-1}g^{\star-1}F^{\star}(g)F^{\star}(l) = l^{-1}n_{w_1^{\star}}(n_{w_1^{\star}}^{-1}ln_{z^{\star}}n_{w_1^{\star}}) = n_{z^{\star}}n_{w_1^{\star}}.
\end{equation*}
%%%%
The computation of the image of the first map is \cref{prop:Lusztig-series-iso-image}. It's clear that $\bL^{\star}$ contains the centraliser $C_{\bG^{\star}}(\imath_{g^{\star}}(t))$ so by \cite[13.25(ii)]{digne-michel:1991:representations-of-finite-groups-of-lie-type} we have $(-1)^{\ell(w_1)}R_{\bL \subseteq \bP}^{\bG}$ is a bijection between the geometric Lusztig series labelled by $\imath_{g^{\star}}(t)$. However, applying \cite[Th\'eor\`eme 11.10]{bonnafe:2006:sln} one easily concludes the same statement holds for the rational Lusztig series. By \cref{lem:twisted-ind-DL-ind} the map in \cref{eq:map-decomp} is nothing other than $R_{I,w_1}^{\bG}$ so the result follows.
\end{proof}

\section{Equivariance of Twisted Induction}\label{sec:equivariance-twisted-induction}
\begin{assumption}
We now assume that $\sigma \in \Iso(\mathcal{G},F)$ is a bijective isogeny and $\sigma^{\star} \in \Iso(\mathcal{G}^{\star},F^{\star})$ is dual to $\sigma$, c.f., \cref{pa:dual-isogenies}.
\end{assumption}

\begin{pa}
The following result gives the equivariance of twisted induction with respect to bijective isogenies. We note that the corresponding result for Deligne--Lusztig induction is well known, see for instance \cite[13.22]{digne-michel:1991:representations-of-finite-groups-of-lie-type} and \cite[Corollary 2.3]{navarro-tiep-turull:2008:brauer-characters-with-cyclotomic}. Our version of this statement has the added advantage that we can explicitly compute the corresponding automorphism on the Levi subgroup. We note that our proof is similar in strategy to that of \cite[Corollary 2.3]{navarro-tiep-turull:2008:brauer-characters-with-cyclotomic}.
\end{pa}

\begin{prop}\label{prop:equivariance}
Let $I \subseteq \Delta$ be a subset of simple roots and let $w \in Z_W(\bL_I,F)$. Assume $z \in W$ is such that $z\sigma(I) = I$ and $zF(\sigma(w)z^{-1}) = F(w)$ then there exists an element $n \in N_{\bG}(\bT_0)$, representing $z \in W$, such that $nF(\sigma(n_w)n^{-1}) = F(n_w)$. Moreover, if $n \in N_{\bG}(\bT_0)$ is such an element then $n\sigma \in \Iso(\mathcal{L}_I,Fw)$ restricts to an automorphism of $\bL_I^{Fw}$ and for any $\chi \in \Irr(\bL_I^{Fw})$ we have
%%%%
\begin{equation*}
{}^{\sigma}R_{I,w}^{\bG}(\chi) = R_{I,w}^{\bG}({}^{n\sigma}\chi).
\end{equation*}
%%%%
\end{prop}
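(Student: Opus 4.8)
The plan is to follow the strategy of \cite[Corollary 2.3]{navarro-tiep-turull:2008:brauer-characters-with-cyclotomic}, namely to realise $\sigma$ as an isomorphism of the relevant Deligne--Lusztig varieties and then transport cohomology, but to keep careful track of which isogeny of the Levi subgroup is induced. First I would establish the existence of the representative $n$. By \cref{thm:isogeny} we may choose some representative $n_0 \in N_{\bG}(\bT_0)$ of $z$; the hypothesis $zF(\sigma(w)z^{-1}) = F(w)$ says that $n_0 F(\sigma(n_w)n_0^{-1})$ and $F(n_w)$ represent the same element of $W$, hence differ by an element $t \in \bT_0$. A short computation shows that replacing $n_0$ by $n = sn_0$ for a suitable $s \in \bT_0$ absorbs $t$ (using that $\bT_0$ is connected and the Lang map $x \mapsto x^{-1}F(x)$ on an appropriate twisted torus is surjective), giving $nF(\sigma(n_w)n^{-1}) = F(n_w)$ exactly. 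This is a standard Lang-type argument and I do not expect it to be the obstacle.

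Next I would check that $n\sigma$ is the right map. Since $z\sigma(I) = I$ and $\sigma \in \Iso(\mathcal{G},F)$, the composite $n\sigma = \imath_n \circ \sigma$ sends $\bL_I$ to $\bL_I$, sends $\bB_I$ to $\bB_I$, and sends $\bT_0$ to $\bT_0$, so $n\sigma \in \Iso(\mathcal{L}_I)$. The identity $nF(\sigma(n_w)n^{-1}) = F(n_w)$, together with $\sigma \circ F = F \circ \sigma$, is precisely the statement that $n\sigma$ commutes with the twisted Frobenius $Fw = F\imath_{n_w}$; hence $n\sigma \in \Iso(\mathcal{L}_I, Fw)$ and restricts to an automorphism of $\bL_I^{Fw}$. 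This part is bookkeeping.

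For the main equality, recall $\bY_{I,w}^{\bG} = \{x \in \bG \mid x^{-1}F(x) \in F(n_w\bU_I)\}$. The map $\sigma : \bG \to \bG$ does not preserve this variety on the nose because $\sigma(n_w)$ need not equal $n_w$; this is exactly where the correction by $n$ enters. I would consider the morphism $\rho : \bY_{I,w}^{\bG} \to \bG$ defined (up to a fixed translation) by $\rho(x) = n\sigma(x)$ or $\rho(x) = \sigma(x)m$ for an auxiliary $m \in \bG$ with $m^{-1}F(m)$ chosen so that the defining condition is respected, and verify using $\sigma(\bU_I) = \bU_{\sigma(I)} = {}^{z^{-1}}\bU_I$ and $nF(\sigma(n_w)n^{-1}) = F(n_w)$ that $\rho$ maps $\bY_{I,w}^{\bG}$ isomorphically onto itself; crucially it is equivariant for the actions of $\bG^F$ and $\bL_I^{Fw}$ twisted by $\sigma$ on the left and by $n\sigma$ on the right, in the sense that $\rho(g \cdot x \cdot l) = \sigma(g)\cdot \rho(x)\cdot (n\sigma)(l)$. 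Transporting through $\ell$-adic cohomology then yields an isomorphism $H_c^i(\bY_{I,w}^{\bG}) \cong {}^{\sigma}\!\bigl(H_c^i(\bY_{I,w}^{\bG})\bigr)$ of $(\Ql\bG^F, \Ql\bL_I^{Fw})$-bimodules where the right module structure on the target is twisted by $n\sigma$; tensoring with $M_\chi$ and reading off traces gives ${}^{\sigma}R_{I,w}^{\bG}(\chi) = R_{I,w}^{\bG}({}^{n\sigma}\chi)$, using ${}^{n\sigma}M_\chi \cong M_{{}^{n\sigma}\chi}$ as in the proof of \cref{lem:twisted-ind-DL-ind}.

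The main obstacle I anticipate is getting the variety isomorphism $\rho$ and its precise equivariance correct: one must pin down the auxiliary translating element (or argue one is not needed once $n$ is chosen as above) and track how $\sigma(F(n_w\bU_I)) = F(\sigma(n_w)\sigma(\bU_I))$ compares with $F(n_w\bU_I)$ after conjugating by $n$. Once the condition $nF(\sigma(n_w)n^{-1}) = F(n_w)$ and the compatibility $\sigma(\bU_I) = {}^{n^{-1}}\bU_I$ are both in hand, the check that $\rho$ restricts to an isomorphism of $\bY_{I,w}^{\bG}$ with the correct bi-equivariance is a direct, if delicate, computation; everything downstream (functoriality of $H_c^i$, the tensor-product formula defining $R_{I,w}^{\bG}$, and the identification of the twisted module) is routine.
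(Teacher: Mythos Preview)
Your proposal is correct and follows essentially the same approach as the paper's proof: a Lang--Steinberg argument inside $\bT_0$ (for the twisted Frobenius $Fw$) to produce $n = sn_z$, followed by the explicit variety map $\phi(x) = \sigma(x)n^{-1}$ (your second option, with $m = n^{-1}$; the first option $\rho(x)=n\sigma(x)$ does not have the correct left $\bG^F$-equivariance) and a direct check, using $\sigma(\bU_I)=\bU_{z^{-1}(I)}={}^{n^{-1}}\bU_I$ and $nF(\sigma(n_w)n^{-1})=F(n_w)$, that $\phi$ restricts to a self-isomorphism of $\bY_{I,w}^{\bG}$ with bi-equivariance $\phi(gxl)=\sigma(g)\,\phi(x)\,(n\sigma)(l)$. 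The downstream cohomological transport and tensor argument are exactly as you outline, mirroring the proof of \cref{lem:twisted-ind-DL-ind}.
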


\begin{proof}
By assumption we have $n_zF(\sigma(n_w)n_z^{-1}n_w^{-1}) \in \bT_0$ because $\sigma(\bT_0) = \bT_0$ and $F(\bT_0) = \bT_0$. Applying the Lang--Steinberg theorem inside $\bT_0$ to the Steinberg endomorphism $Fw$ we see that there exists an element $s \in \bT_0$ such that
%%%%
\begin{equation*}
n_zF(\sigma(n_w)n_z^{-1}n_w^{-1}) = s^{-1}F(n_wsn_w^{-1}),
\end{equation*}
%%%%
which implies $nF(\sigma(n_w)n^{-1}) = F(n_w)$ where $n = sn_z$. This condition ensures that $n\sigma\circ Fw = Fw\circ n\sigma$ so $n\sigma$ restricts to an automorphism of $\bL_I^{Fw}$.

We claim that the bijective morphism $\phi : \bG \to \bG$, defined by $\phi(x) = \sigma(x)n^{-1}$, restricts to a bijective morphism of varieties $\bY_{I,w}^{\bG} \to \bY_{I,w}^{\bG}$. Indeed, assume $x \in \bY_{I,w}^{\bG}$ then
%%%%
\begin{equation*}
(\sigma(x)n^{-1})^{-1}F(\sigma(x)n^{-1}) = n\sigma(x^{-1}F(x))F(n^{-1}) \in n\sigma(F(n_w\bU_I))F(n^{-1}).
\end{equation*}
%%%%
Clearly we have
%%%%
\begin{equation*}
\sigma(F(n_w\bU_I)) = F(\sigma(n_w)\bU_{\sigma(I)}) = F(\sigma(n_w)\bU_{z^{-1}(I)}) = F(\sigma(n_w)n^{-1}\bU_In)
\end{equation*}
%%%%
because $\sigma(\bP_I) = \bP_{\sigma(I)}$ and $n$ represents $z$. Therefore, we must have
%%%%
\begin{equation*}
n\sigma(F(n_w\bU_I))F(n^{-1}) = nF(\sigma(n_w)n^{-1}\bU_I) = F(n_w\bU_I)
\end{equation*}
%%%%
by the first part. This proves the claim.

A quick calculation shows that $\phi(gxl) = \sigma(g)\phi(x){}^n\sigma(l)$ for any $g \in \bG^F$, $x \in \bY_{I,w}^{\bG}$, and $l \in \bL_I^{Fw}$, from which we deduce that $\phi$ induces an isomorphism ${}^{\sigma}H_c^i(\bY_{I,w}^{\bG})^{n\sigma} \to H_c^i(\bY_{I,w}^{\bG})$ of $(\Ql\bG^F,\Ql\bL_I^{Fw})$-bimodules for any $i \in \mathbb{Z}$. We may now argue as in \cref{lem:twisted-ind-DL-ind} to complete the proof.
\end{proof}

\begin{pa}\label{pa:stab-Lusztig-series}
We wish to understand the effect of $\sigma$ on a Lusztig series $\mathcal{E}_0(\bG^F,s,a^{\star})$ with $s \in \bT_0^{\star}$ and $a^{\star} \in \mathcal{A}_{W^{\star}}(s,F^{\star})$. After possibly replacing $(s,a^{\star})$ by a pair in the same $W^{\star}$-orbit we may, and will, assume that the Levi cover of $C_{\bG^{\star}}(s)$ is a standard Levi subgroup $\bL_I^{\star}$ with $I \subseteq \Delta$. By \cref{prop:Lusztig-series-iso-image,lem:DL-geo-series-compare} we have ${}^{\sigma}\mathcal{E}_0(\bG^F,s,a^{\star}) = \mathcal{E}_0(\bG^F,\sigma^{\star-1}(s),\sigma^{\star-1}(a^{\star}))$; note here we've used that $F^{\star}$ and $\sigma^{\star}$ commute. As we will be interested in characters which are fixed by $\sigma$ we will assume that ${}^{\sigma}\mathcal{E}_0(\bG^F,s,a^{\star}) = \mathcal{E}_0(\bG^F,s,a^{\star})$. By \cref{pa:disjointness} there thus exists an element $x^{\star} \in W^{\star}$ such that $(s,a^{\star}) = ({}^{n_{x^{\star-1}}}\sigma^{\star-1}(s),x^{\star-1}\sigma^{\star-1}(a^{\star})F^{\star}(x^{\star}))$.
\end{pa}

\begin{pa}\label{pa:stab-Lusztig-series-1}
The exact same arguments that were used in \cref{pa:Levi-stable} show that ${}^{n_{x^{\star-1}}}\sigma^{\star-1}(\bL_I^{\star}) = \bL_I^{\star}$. Moreover, there exists an element $y^{\star} \in W_I^{\star}$ such that $y^{\star-1}x^{\star-1}\sigma^{\star-1}(I^{\star}) = I^{\star}$. Let us set $z^{\star} = x^{\star}y^{\star}$ then we have a bijection $\psi : T_{W^{\star}}(I^{\star},F^{\star}) \to T_{W^{\star}}(I^{\star},F^{\star})$ given by $\psi(w^{\star}) = z^{\star-1}\sigma^{\star-1}(w^{\star})F^{\star}(z^{\star})$. Moreover, we have a commutative diagram
%%%%
\begin{equation*}
\begin{tikzcd}
\mathcal{A}_{W^{\star}}(s,F^{\star}) \arrow{d}{\phi}\arrow{r}{} & T_{W^{\star}}(I^{\star},F^{\star}) \arrow{d}{\psi}\\
\mathcal{A}_{W^{\star}}(s,F^{\star}) \arrow{r} & T_{W^{\star}}(I^{\star},F^{\star})
\end{tikzcd}
\end{equation*}
%%%%
where the horizontal maps are given by \cref{eq:bijection-dual} and the map $\phi$ is as in \cref{pa:disjointness}. In particular, if $a^{\star} \in \mathcal{A}_{W^{\star}}(s,F^{\star})^{\phi}$ corresponds to $w_1^{\star} \in T_{W^{\star}}(I^{\star},F^{\star})$ under the map in \cref{eq:bijection-dual} then we have $z^{\star-1}\sigma^{\star-1}(w_1^{\star})F^{\star}(z^{\star}) = w_1^{\star}$. By duality this implies that $F^{-1}(z)\sigma(w_1)z^{-1} = w_1$ or, equivalently, that $zF(\sigma(w_1)z^{-1}) = F(w_1)$. From $z^{\star-1}\sigma^{\star-1}(I^{\star}) = I^{\star}$ we also get $z\sigma(I) = I$; thus we are in the setting of \cref{prop:equivariance} from which we obtain the following.
\end{pa}

\begin{thm}\label{thm:equivariance-DL-induction}
Assume $\mathcal{E}_0(\bG^F,s,a^{\star})$ is a $\sigma$-invariant Lusztig series; we will maintain the notation of \cref{pa:stab-Lusztig-series,pa:stab-Lusztig-series-1}. Let $n \in N_{\bG}(\bT_0)$ be an element, representing $z \in W$, such that $nF(\sigma(n_{w_1})n^{-1}) = F(n_{w_1})$ then we have bijections
%%%%
\begin{equation*}
\mathcal{E}_0(\bL_I^{Fw_1},s)\to\mathcal{E}_0(\bG^F,s)
\qquad\text{and}\qquad
\mathcal{E}_0(\bL_I^{Fw_1},s,a^{\star}w_1^{\star-1})\to\mathcal{E}_0(\bG^F,s,a^{\star})
\end{equation*}
%%%%
which are equivariant with respect to $\sigma$ and $n\sigma$ in the sense that
%%%%
\begin{equation*}
{}^{\sigma}R_{I,w_1}^{\bG}(\chi) = R_{I,w_1}^{\bG}({}^{n\sigma}\chi)
\end{equation*}
%%%%
for any $\chi \in \mathcal{E}_0(\bL_I^{Fw_1},s,a^{\star}w_1^{\star-1})$.
\end{thm}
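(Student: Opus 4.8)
The plan is to deduce the theorem by combining the bijectivity of twisted induction proved in \cref{thm:Lusztig-bijection} with the equivariance statement of \cref{prop:equivariance}; essentially all of the work has been carried out in those results, so what remains is mostly bookkeeping.

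First I would record the bijections. By the setup of \cref{pa:stab-Lusztig-series-1} we have $w_1^{\star} \in T_{W^{\star}}(I^{\star},F^{\star})$ (and hence $w_1 \in Z_W(\bL_I,F)$), so \cref{thm:Lusztig-bijection} applies with this choice of $w_1^{\star}$. It gives that $b^{\star} \mapsto b^{\star}w_1^{\star}$ is a bijection $\mathcal{A}_{W_I^{\star}}(s,w_1^{\star}F^{\star}) \to \mathcal{A}_{W^{\star}}(s,F^{\star})$, and that $(-1)^{\ell(w_1)}R_{I,w_1}^{\bG}$ induces bijections $\mathcal{E}_0(\bL_I^{Fw_1},s)\to\mathcal{E}_0(\bG^F,s)$ and $\mathcal{E}_0(\bL_I^{Fw_1},s,b^{\star}) \to\mathcal{E}_0(\bG^F,s,b^{\star}w_1^{\star})$ for every $b^{\star} \in \mathcal{A}_{W_I^{\star}}(s,w_1^{\star}F^{\star})$. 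Specialising to $b^{\star} = a^{\star}w_1^{\star-1}$ --- which lies in $\mathcal{A}_{W_I^{\star}}(s,w_1^{\star}F^{\star})$ exactly because $a^{\star} \in \mathcal{A}_{W^{\star}}(s,F^{\star})$, by the bijection just recalled --- gives the bijections in the form stated. Since multiplying a virtual character by the scalar $(-1)^{\ell(w_1)}$ does not change which irreducible characters occur in it, the map $R_{I,w_1}^{\bG}$ itself induces these bijections of Lusztig series.

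Next I would obtain the equivariance. In \cref{pa:stab-Lusztig-series-1} the element $z \in W$ was arranged so that $z\sigma(I) = I$ and $zF(\sigma(w_1)z^{-1}) = F(w_1)$; this is precisely the hypothesis of \cref{prop:equivariance} taken with $w = w_1$. The first part of that proposition guarantees that an element $n \in N_{\bG}(\bT_0)$ representing $z$ with $nF(\sigma(n_{w_1})n^{-1}) = F(n_{w_1})$ exists, so the hypothesis of the present theorem is non-vacuous; and for any such $n$ its second part tells us that $n\sigma$ restricts to an automorphism of $\bL_I^{Fw_1}$ and that ${}^{\sigma}R_{I,w_1}^{\bG}(\chi) = R_{I,w_1}^{\bG}({}^{n\sigma}\chi)$ for all $\chi \in \Irr(\bL_I^{Fw_1})$. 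As $(-1)^{\ell(w_1)}$ commutes with the $\mathbb{Z}$-linear operations $\chi \mapsto {}^{\sigma}\chi$ and $\chi \mapsto {}^{n\sigma}\chi$, the same identity holds with $R_{I,w_1}^{\bG}$ replaced by $(-1)^{\ell(w_1)}R_{I,w_1}^{\bG}$.

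Finally I would assemble the pieces. By the standing assumption of \cref{pa:stab-Lusztig-series} the series $\mathcal{E}_0(\bG^F,s,a^{\star})$ is $\sigma$-stable; since $(-1)^{\ell(w_1)}R_{I,w_1}^{\bG}$ restricts to a bijection $\mathcal{E}_0(\bL_I^{Fw_1},s,a^{\star}w_1^{\star-1}) \to \mathcal{E}_0(\bG^F,s,a^{\star})$ intertwining ${}^{n\sigma}$ with ${}^{\sigma}$, it follows formally that ${}^{n\sigma}$ preserves the preimage $\mathcal{E}_0(\bL_I^{Fw_1},s,a^{\star}w_1^{\star-1})$ and that the displayed identity records the equivariance of this bijection with respect to $\sigma$ and $n\sigma$; the same reasoning covers $\mathcal{E}_0(\bL_I^{Fw_1},s)\to\mathcal{E}_0(\bG^F,s)$. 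I do not anticipate a real obstacle here: the only points that need attention are the index shift $a^{\star} \leftrightarrow a^{\star}w_1^{\star-1}$ between $\mathcal{A}_{W^{\star}}(s,F^{\star})$ and $\mathcal{A}_{W_I^{\star}}(s,w_1^{\star}F^{\star})$, and verifying that the hypotheses of \cref{prop:equivariance} genuinely hold for $w = w_1$ --- both of which have already been set up in \cref{pa:stab-Lusztig-series-1}.
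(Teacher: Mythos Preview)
Your overall strategy is the same as the paper's: combine \cref{thm:Lusztig-bijection} with \cref{prop:equivariance}. The bijections and the equivariance identity are handled correctly. However, the final step contains a genuine gap.

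You write that ``it follows formally that ${}^{n\sigma}$ preserves the preimage $\mathcal{E}_0(\bL_I^{Fw_1},s,a^{\star}w_1^{\star-1})$.'' This is not formal. What you have established is: (i) $R := (-1)^{\ell(w_1)}R_{I,w_1}^{\bG}$ restricts to a bijection $\mathcal{E}_0(\bL_I^{Fw_1},s,a^{\star}w_1^{\star-1}) \to \mathcal{E}_0(\bG^F,s,a^{\star})$; (ii) ${}^{\sigma}R(\chi) = R({}^{n\sigma}\chi)$ for all $\chi \in \Irr(\bL_I^{Fw_1})$; (iii) $\sigma$ stabilises $\mathcal{E}_0(\bG^F,s,a^{\star})$. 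From these you can only deduce that $R$ maps ${}^{n\sigma}\mathcal{E}_0(\bL_I^{Fw_1},s,a^{\star}w_1^{\star-1})$ bijectively onto $\mathcal{E}_0(\bG^F,s,a^{\star})$. But $R$ is \emph{not} injective on all of $\Irr(\bL_I^{Fw_1})$ --- twisted induction can identify irreducibles from different Lusztig series --- so two different subsets of $\Irr(\bL_I^{Fw_1})$ can be mapped bijectively onto the same target. Nothing you have written rules out that ${}^{n\sigma}\mathcal{E}_0(\bL_I^{Fw_1},s,a^{\star}w_1^{\star-1})$ is a different Lusztig series of $\bL_I^{Fw_1}$ which $R$ happens to map onto $\mathcal{E}_0(\bG^F,s,a^{\star})$.

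The paper closes this gap by a direct computation rather than a formal argument: it observes that $\sigma^{\star}z^{\star}$ is dual to $n\sigma$, applies \cref{prop:Lusztig-series-iso-image} (in the guise of \cref{pa:stab-Lusztig-series}) to identify ${}^{n\sigma}\mathcal{E}_0(\bL_I^{Fw_1},s,a^{\star}w_1^{\star-1})$ with $\mathcal{E}_0(\bL_I^{Fw_1},{}^{n_{z^{\star}}^{-1}}\sigma^{\star-1}(s),z^{\star-1}\sigma^{\star-1}(a^{\star}w_1^{\star-1}))$, and then checks by hand (using the relations set up in \cref{pa:stab-Lusztig-series,pa:stab-Lusztig-series-1}) that this pair lies in the same $W_I^{\star}$-orbit as $(s,a^{\star}w_1^{\star-1})$. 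That computation is the missing ingredient in your argument.
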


\begin{proof}
The statement is simply \cref{thm:Lusztig-bijection,prop:equivariance}. We only need to check that the automorphism $n\sigma$ of $\bL_I^{Fw_1}$ preserves the Lusztig series $\mathcal{E}_0(\bL_I^{Fw_1},s,a^{\star}w_1^{\star-1})$. However, clearly $\sigma^{\star}z^{\star}$ is dual to the isogeny $n\sigma$ because $n$ represents $z$ so, as in \cref{pa:stab-Lusztig-series}, we have
%%%%
\begin{equation*}
{}^{n\sigma}\mathcal{E}_0(\bL_I^{Fw_1},s,a^{\star}w_1^{\star-1}) = \mathcal{E}_0(\bL_I^{Fw_1},{}^{n_{z^{\star-1}}}\sigma^{\star-1}(s),z^{\star-1}\sigma^{\star-1}(a^{\star}w_1^{\star-1})).
\end{equation*}
%%%%
Note first that ${}^{n_{z^{\star}}^{-1}}\sigma^{\star-1}(s) = {}^{n_{y^{\star}}^{-1}}s$, because ${}^{n_{x^{\star}}^{-1}}\sigma^{\star-1}(s) = s$, and as $\sigma^{\star-1}(w_1^{\star-1}) = F^{\star}(z^{\star})w_1^{\star-1}z^{\star-1}$ we have
%%%%
\begin{align*}
z^{\star-1}\sigma^{\star-1}(a^{\star}w_1^{\star-1}) &= z^{\star-1}\sigma^{\star-1}(a^{\star})F^{\star}(z^{\star})w_1^{\star-1}\\
&= y^{\star-1}a^{\star}F^{\star}(y^{\star})w_1^{\star-1}\\
&= y^{\star-1}(a^{\star}w_1^{\star-1})w_1^{\star}F^{\star}(y^{\star}).
\end{align*}
%%%%
As $y^{\star} \in W_I^{\star}$ this shows that the pair $({}^{n_{z^{\star}}^{-1}}\sigma^{\star-1}(s),z^{\star-1}\sigma^{\star-1}(a^{\star}w_1^{\star-1})) \in \mathcal{T}_{\bL_I^{\star}}(\bT_0^{\star},w_1^{\star}F^{\star})$ is in the same $W_I^{\star}$-orbit as $(s,a^{\star}w_1^{\star-1}) \in \mathcal{T}_{\bL_I^{\star}}(\bT_0^{\star},w_1^{\star}F^{\star})$. Thus, by \cref{pa:disjointness}, we have the Lusztig series is preserved.
\end{proof}

\section{Regular Embeddings}\label{sec:regular-embeddings}
\begin{pa}
We now assume that $\iota : \bG \to \widetilde{\bG}$ is a regular embedding, in the sense of \cite[7]{lusztig:1988:reductive-groups-with-a-disconnected-centre}. In particular, $\widetilde{\bG}$ is a connected reductive algebraic group with connected centre and $\iota$ is a closed embedding such that the derived subgroup of the image $\iota(\bG)$ is the derived subgroup of $\widetilde{\bG}$. Moreover, we assume $\widetilde{\bG}$ is equipped with a Steinberg endomorphism, also denoted by $F : \widetilde{\bG} \to \widetilde{\bG}$, such that $F\circ\iota = \iota\circ F$. This means that $\iota$ restricts to an embedding $\iota : \bG^F \to \widetilde{\bG}^F$ and the image $\iota(\bG^F)$ is normal in $\widetilde{\bG}^F$. In what follows we implicitly identify $\bG$ with its image $\iota(\bG)$ in $\widetilde{\bG}$. In this vein we denote by $\Res_{\bG^F}^{\widetilde{\bG}^F}(\widetilde{\chi})$ the character $\widetilde{\chi}\circ\iota$ for any character $\widetilde{\chi}$ of $\widetilde{\bG}^F$.
\end{pa}

\begin{pa}
Given such a regular embedding we can construct a canonical triple $\widetilde{\mathcal{G}} = (\widetilde{\bG},\widetilde{\bB}_0,\widetilde{\bT}_0)$ by setting $\widetilde{\bB}_0 = \bB_0Z(\widetilde{\bG})$ and $\widetilde{\bT}_0 = \bT_0Z(\widetilde{\bG})$. Clearly we have $F \in \Iso(\widetilde{\mathcal{G}})$. We assume fixed a triple $\widetilde{\mathcal{G}}^{\star} = (\widetilde{\bG}^{\star},\widetilde{\bB}_0^{\star},\widetilde{\bT}_0^{\star})$ dual to $\mathcal{G}$ and a Steinberg endomorphism $F^{\star} \in \Iso(\widetilde{\mathcal{G}}^{\star})$ dual to $F$. The embedding $\iota$ determines a surjective homomorphism $\iota^{\star} : \widetilde{\bG}^{\star} \to \bG^{\star}$, whose kernel is a central torus, satisfying $F^{\star}\circ\iota^{\star} = \iota^{\star}\circ F^{\star}$. We will denote by $\widetilde{W} = W_{\widetilde{\bG}}(\widetilde{\bT}_0)$ and $\widetilde{W}^{\star} = W_{\widetilde{\bG}^{\star}}(\widetilde{\bT}_0^{\star})$ the corresponding Weyl groups of $\widetilde{\bG}$ and $\widetilde{\bG}^{\star}$. Note that $\iota$, resp., $\iota^{\star}$, induces an isomorphism $W \to \widetilde{W}$, resp., $\widetilde{W}^{\star} \to W^{\star}$, and we will implicitly identify these groups through this isomorphism. With this we consider the labelling sets $\mathcal{T}_{\bG^{\star}}(\bT_0^{\star},F^{\star})$ and $\mathcal{T}_{\widetilde{\bG}^{\star}}(\widetilde{\bT}_0^{\star},F^{\star})$ defined in \cref{pa:disjointness}.
\end{pa}

\begin{lem}\label{lem:surjective-on-pairs}
The map $\iota^{\star}$ induces a surjective map $\iota^{\star} : \mathcal{T}_{\widetilde{\bG}^{\star}}(\widetilde{\bT}_0^{\star},F^{\star}) \to \mathcal{T}_{\bG^{\star}}(\bT_0^{\star},F^{\star})$.
\end{lem}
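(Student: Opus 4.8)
The plan is to first make precise the map $\iota^{\star}$ induces on pairs, and then establish surjectivity by a Lang--Steinberg lifting argument along the central torus $\ker\iota^{\star}$. Throughout I identify $\widetilde{W}^{\star}$ with $W^{\star}$ via $\iota^{\star}$, as in the paragraph preceding the lemma. Since $\ker\iota^{\star}$ is a central torus it is contained in $\widetilde{\bT}_0^{\star}$, and a comparison of ranks shows $\iota^{\star}$ restricts to a surjective homomorphism $\widetilde{\bT}_0^{\star}\to\bT_0^{\star}$ which commutes with $F^{\star}$. Under the identification of Weyl groups the roots of $\widetilde{\bG}^{\star}$ and of $\bG^{\star}$ correspond, and for $\widetilde{s}\in\widetilde{\bT}_0^{\star}$ with $s=\iota^{\star}(\widetilde{s})$ a root $\alpha$ of $\widetilde{\bG}^{\star}$ satisfies $\alpha(\widetilde{s})=1$ precisely when the corresponding root $\bar\alpha$ of $\bG^{\star}$ satisfies $\bar\alpha(s)=1$ (because $\bar\alpha(s)=\bar\alpha(\iota^{\star}(\widetilde{s}))=\alpha(\widetilde{s})$). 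By the standard description of the Weyl group of the centraliser of a semisimple torus element this gives $\widetilde{W}^{\star\circ}(\widetilde{s})=W^{\star\circ}(s)$ inside $W^{\star}$. Likewise, applying $\iota^{\star}$ to the defining condition shows $T_{\widetilde{W}^{\star}}(\widetilde{s},F^{\star})\subseteq T_{W^{\star}}(s,F^{\star})$. These two facts show that $(\widetilde{s},\widetilde{W}^{\star\circ}(\widetilde{s})w^{\star})\mapsto(\iota^{\star}(\widetilde{s}),W^{\star\circ}(s)w^{\star})$ is a well-defined map $\mathcal{T}_{\widetilde{\bG}^{\star}}(\widetilde{\bT}_0^{\star},F^{\star})\to\mathcal{T}_{\bG^{\star}}(\bT_0^{\star},F^{\star})$, with the labelling sets as in \cref{pa:coset-centraliser,pa:disjointness}.

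For surjectivity, fix $(s,a^{\star})\in\mathcal{T}_{\bG^{\star}}(\bT_0^{\star},F^{\star})$, choose $w^{\star}\in T_{W^{\star}}(s,F^{\star})$ representing $a^{\star}$ together with a representative $n_{w^{\star}}\in N_{\widetilde{\bG}^{\star}}(\widetilde{\bT}_0^{\star})$, and pick any $\widetilde{s}_0\in\widetilde{\bT}_0^{\star}$ with $\iota^{\star}(\widetilde{s}_0)=s$. Set $z={}^{n_{w^{\star}}}F^{\star}(\widetilde{s}_0)\cdot\widetilde{s}_0^{-1}\in\widetilde{\bT}_0^{\star}$. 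Applying $\iota^{\star}$ and using $\iota^{\star}\circ F^{\star}=F^{\star}\circ\iota^{\star}$ together with ${}^{w^{\star}}F^{\star}(s)=s$ gives $\iota^{\star}(z)=1$, so $z\in\ker\iota^{\star}$. Since $\ker\iota^{\star}$ is connected and stable under $F^{\star}$, the Lang--Steinberg theorem provides $k\in\ker\iota^{\star}$ with $k\,F^{\star}(k)^{-1}=z$; as $\ker\iota^{\star}$ is central, the element $\widetilde{s}=\widetilde{s}_0k\in\widetilde{\bT}_0^{\star}$ then satisfies ${}^{n_{w^{\star}}}F^{\star}(\widetilde{s})=z\,\widetilde{s}_0\,F^{\star}(k)=\widetilde{s}_0k=\widetilde{s}$, while still $\iota^{\star}(\widetilde{s})=s$. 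Hence $w^{\star}\in T_{\widetilde{W}^{\star}}(\widetilde{s},F^{\star})$, so $(\widetilde{s},\widetilde{W}^{\star\circ}(\widetilde{s})w^{\star})$ is a genuine element of $\mathcal{T}_{\widetilde{\bG}^{\star}}(\widetilde{\bT}_0^{\star},F^{\star})$, and by construction $\iota^{\star}$ sends it to $(s,W^{\star\circ}(s)w^{\star})=(s,a^{\star})$.

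The only delicate point is the bookkeeping in the first paragraph: one must check that $\iota^{\star}$ really matches up $\widetilde{W}^{\star\circ}(\widetilde{s})$ with $W^{\star\circ}(\iota^{\star}(\widetilde{s}))$ and the transporters $T_{\widetilde{W}^{\star}}(\widetilde{s},F^{\star})$ and $T_{W^{\star}}(s,F^{\star})$ correctly under the identification $\widetilde{W}^{\star}\cong W^{\star}$, which rests only on the standard root-theoretic description of these subgroups. Granting that, the substance of the lemma is the lifting of $s$ along the central torus $\ker\iota^{\star}$ compatibly with a chosen $F^{\star}$-conjugacy witness $w^{\star}$, and this is a routine application of the Lang--Steinberg theorem; no serious obstacle arises.
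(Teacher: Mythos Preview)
Your argument is correct and follows essentially the same route as the paper's proof: both establish well-definedness by matching $\widetilde{W}^{\star\circ}(\widetilde{s})$ with $W^{\star\circ}(s)$ and embedding the transporters, and both obtain surjectivity by lifting $s$ along the central torus $\ker\iota^{\star}$ via the Lang--Steinberg theorem. The only cosmetic difference is that the paper deduces $\widetilde{W}^{\star}(\widetilde{s})=\widetilde{W}^{\star\circ}(\widetilde{s})$ from the connectedness of centralisers in $\widetilde{\bG}^{\star}$ (a consequence of $Z(\widetilde{\bG})$ being connected), whereas you identify $\widetilde{W}^{\star\circ}(\widetilde{s})$ with $W^{\star\circ}(s)$ directly via the root-theoretic description; either suffices.
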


\begin{proof}
Let $\widetilde{s} \in \widetilde{\bT}_0^{\star}$ then we certainly have $\iota^{\star}$ induces an injective map $T_{\widetilde{W}^{\star}}(\widetilde{s},F^{\star}) \to T_{W^{\star}}(\iota^{\star}(\widetilde{s}),F^{\star})$. As the centre of $\widetilde{\bG}$ is connected we have $C_{\widetilde{\bG}^{\star}}(\widetilde{s}) = C_{\widetilde{\bG}^{\star}}^{\circ}(\widetilde{s})$, see \cite[13.15(ii)]{digne-michel:1991:representations-of-finite-groups-of-lie-type}, so $\widetilde{W}^{\star}(\widetilde{s}) = \widetilde{W}^{\star\circ}(\widetilde{s})$ and $\iota^{\star}(\widetilde{W}^{\star}(\widetilde{s})) = W^{\star\circ}(\iota^{\star}(\widetilde{s}))$. Hence $\iota^{\star}$ induces a map $\mathcal{A}_{\widetilde{W}^{\star}}(\widetilde{s},F^{\star}) \to \mathcal{A}_{W^{\star}}(\iota^{\star}(\widetilde{s}),F^{\star})$ so also a map $\mathcal{T}_{\widetilde{\bG}^{\star}}(\widetilde{\bT}_0^{\star},F^{\star}) \to \mathcal{T}_{\bG^{\star}}(\bT_0^{\star},F^{\star})$.

Assume $(s,a^{\star}) \in \mathcal{T}_{\bG^{\star}}(\bT_0^{\star},F^{\star})$ then as $\iota^{\star}$ is surjective there exists an element $\widetilde{s} \in \widetilde{\bT}_0^{\star}$ such that $\iota^{\star}(\widetilde{s}) = s$. Let $w^{\star} \in W^{\star}(s)$ be such that $a^{\star} = W^{\star\circ}(s)w^{\star}$ then ${}^{n_{w^{\star}}}F^{\star}(s) = s$ so $\widetilde{s}^{-1}{}^{n_{w^{\star}}}F^{\star}(\widetilde{s}) \in \Ker(\iota^{\star})$. As $\Ker(\iota^{\star})$ is connected and central we have by the Lang--Steinberg theorem that there exists an element $z \in \Ker(\iota^{\star})$ such that ${}^{w^{\star}}F^{\star}(\widetilde{s}z) = \widetilde{s}z$. This shows the map is surjective.
\end{proof}

\begin{rem}
For any element $\widetilde{s} \in \widetilde{\bT}_0^{\star}$ we have $\mathcal{A}_{\widetilde{W}^{\star}}(\widetilde{s},F^{\star})$ is a singleton. Hence we have a bijection between $\mathcal{T}_{\widetilde{\bG}^{\star}}(\widetilde{\bT}_0^{\star},F^{\star})$ and the $F^{\star}$-stable $\widetilde{W}^{\star}$-orbits on $\widetilde{\bT}_0^{\star}$. As mentioned in \cref{pa:geometric-series} we thus have $\mathcal{E}_0(\widetilde{\bG}^F,\widetilde{s},a^{\star}) = \mathcal{E}_0(\widetilde{\bG}^F,\widetilde{s})$ for any pair $(\widetilde{s},a^{\star}) \in \mathcal{T}_{\widetilde{\bG}^{\star}}(\widetilde{\bT}_0^{\star},F^{\star})$.
\end{rem}

\begin{lem}\label{lem:restriction}
Assume $(\widetilde{s},a^{\star}) \in \mathcal{T}_{\widetilde{\bG}^{\star}}(\widetilde{\bT}_0^{\star},F^{\star})$ and let $\iota^{\star}(\widetilde{s},a^{\star}) = (s,a^{\star})$.
%%%%
\begin{enumerate}
	\item If $\widetilde{\chi} \in \mathcal{E}_0(\widetilde{\bG}^F,\widetilde{s},a^{\star})$ then each irreducible constituent of $\widetilde{\chi}$ is contained in the series $\mathcal{E}_0(\bG^F,s,a^{\star})$.
	\item If $\chi \in \mathcal{E}_0(\bG^F,s,a^{\star})$ then there exists a character $\widetilde{\chi} \in \mathcal{E}_0(\widetilde{\bG}^F,\widetilde{s},a^{\star})$ such that $\chi$ is a constituent of $\Res_{\bG^F}^{\widetilde{\bG}^F}(\widetilde{\chi})$.
\end{enumerate}
\end{lem}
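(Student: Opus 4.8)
The plan is to deduce both assertions from the behaviour of Deligne--Lusztig twisted induction under the regular embedding $\iota$, using \cref{lem:DL-geo-series-compare} to keep track of the rational label $a^\star$. First I would set up the comparison of the relevant virtual characters. Fix a representative $n_{w_1}\in N_{\bG}(\bT_0)$ of an element $w_1\in W$ with $w_1^\star\in T_{\widetilde W^\star}(\widetilde s,F^\star)$; since $n_{w_1}$ normalises $\bT_0$ and centralises $Z(\widetilde\bG)$ it also represents $w_1$ in $N_{\widetilde\bG}(\widetilde\bT_0)$. Let $w^\star\in T_{W^\star}(s,F^\star)$ be the image of $w_1^\star$ under the identification $\widetilde W^\star\cong W^\star$, and recall from the proof of \cref{lem:surjective-on-pairs} that $\iota^\star$ carries $\widetilde W^{\star\circ}(\widetilde s)$ onto $W^{\star\circ}(s)$, so the unique coset forming $\mathcal A_{\widetilde W^\star}(\widetilde s,F^\star)$ is sent to the coset $a^\star\in\mathcal A_{W^\star}(s,F^\star)$ appearing in the statement. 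The one analytic input I would establish is the identity
\[
\Res^{\widetilde\bG^F}_{\bG^F}R^{\widetilde\bG}_{w_1^\star}(\widetilde s)=R^{\bG}_{w^\star}(s).
\]
Its proof runs as in \cref{lem:twisted-ind-DL-ind}: the unipotent radical $\bU_I$ is unchanged by $\iota$, so $\bY^{\widetilde\bG}_{I,w_1}$ fibres over $\bY^{\bG}_{I,w_1}$ with the central torus $Z(\widetilde\bG)$ acting trivially on cohomology, which identifies $H^i_c(\bY^{\widetilde\bG}_{I,w_1})$ with $H^i_c(\bY^{\bG}_{I,w_1})$ compatibly with the $\bG^F$- and $\bL_I^{Fw_1}$-actions; combining this with the fact that the inclusion $\bT_0\hookrightarrow\widetilde\bT_0$ is dual to the surjection $\iota^\star\colon\widetilde\bT_0^\star\to\bT_0^\star$, one checks by unwinding \cref{pa:char-to-elms-iso,lem:bij-DL-chars} that if $\widetilde\theta\in\Irr(\widetilde\bT_0^{Fw_1})$ parametrises $\widetilde s$ then $\Res\widetilde\theta$ parametrises $\iota^\star(\widetilde s)=s$. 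One may also simply invoke the standard compatibility of Deligne--Lusztig induction with regular embeddings, see \cite[13.22]{digne-michel:1991:representations-of-finite-groups-of-lie-type}, \cite[\S7]{lusztig:1988:reductive-groups-with-a-disconnected-centre} and \cite{bonnafe:2006:sln}.

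Granting the displayed identity, part (a) follows at once. If $\widetilde\chi\in\mathcal E_0(\widetilde\bG^F,\widetilde s,a^\star)$ then, since $\mathcal A_{\widetilde W^\star}(\widetilde s,F^\star)$ is a singleton, $\widetilde\chi$ occurs in $R^{\widetilde\bG}_{w_1^\star}(\widetilde s)$; restricting, any irreducible constituent $\chi$ of $\Res^{\widetilde\bG^F}_{\bG^F}\widetilde\chi$ occurs in $R^{\bG}_{w^\star}(s)$, where the disjointness of the Lusztig series of $\widetilde\bG^F$ is used to rule out cancellation among the restrictions of the constituents of $R^{\widetilde\bG}_{w_1^\star}(\widetilde s)$. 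As $W^{\star\circ}(s)w^\star$ is the coset $a^\star$ we conclude $\chi\in\mathcal E_0(\bG^F,s,a^\star)$.

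For part (b), take $w^\star\in T_{W^\star}(s,F^\star)$ with $\langle\chi,R^{\bG}_{w^\star}(s)\rangle_{\bG^F}\neq 0$ and $W^{\star\circ}(s)w^\star\sim_{F^\star}a^\star$. Since $R^{\bG}_{w^\star}(s)$ depends only on the $W^\star$-orbit of the pair $(w^\star,s)$ and $F^\star$-conjugation by an element of $W^\star(s)$ fixes $s$, I may assume $W^{\star\circ}(s)w^\star$ is the coset $a^\star$ itself; by \cref{lem:surjective-on-pairs} this coset is $\iota^\star$ of the one forming $\mathcal A_{\widetilde W^\star}(\widetilde s,F^\star)$, so $w^\star$ lifts to some $w_1^\star\in T_{\widetilde W^\star}(\widetilde s,F^\star)$. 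Let $\theta\in\Irr(\bT_0^{Fw_1})$ parametrise $s$; since $\bT_0^{Fw_1}\leqslant\widetilde\bT_0^{Fw_1}$ are finite abelian groups, $\theta$ extends to $\widetilde\bT_0^{Fw_1}$, and because restriction $\Irr(\widetilde\bT_0^{Fw_1})\to\Irr(\bT_0^{Fw_1})$ is dual to $\iota^\star\colon\widetilde\bT_0^\star\to\bT_0^\star$ its fibre over $\theta$ matches the fibre $\iota^{\star-1}(s)$ inside $\widetilde\bT_0^{\star w_1^\star F^\star}$, which contains the prescribed $\widetilde s$ precisely because $w_1^\star\in T_{\widetilde W^\star}(\widetilde s,F^\star)$; so I may pick the extension $\widetilde\theta$ parametrising $\widetilde s$. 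Then $R^{\widetilde\bG}_{w_1^\star}(\widetilde s)$ restricts to $R^{\bG}_{w^\star}(s)$, which contains $\chi$, whence some irreducible constituent $\widetilde\chi$ of $R^{\widetilde\bG}_{w_1^\star}(\widetilde s)$ has $\chi$ as a constituent of $\Res^{\widetilde\bG^F}_{\bG^F}\widetilde\chi$; such $\widetilde\chi$ lies in $\mathcal E_0(\widetilde\bG^F,\widetilde s,a^\star)$ since $\mathcal A_{\widetilde W^\star}(\widetilde s,F^\star)$ is a singleton.

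I expect the main obstacle to be the bookkeeping around the parametrisation map $\widecheck\delta_{w_1}$: identifying the dual of the torus embedding $\bT_0\hookrightarrow\widetilde\bT_0$ with $\iota^\star$ on the dual tori and tracking this compatibility through the norm maps entering the definition of $\widecheck\delta_{w_1}$, and, in part (b), selecting the extension $\widetilde\theta$ that realises the given $\widetilde s$ rather than an arbitrary lift of $s$. Once this is in place, the remaining steps --- the commutation of induction with $\Res$, and the appeal to disjointness of Lusztig series to avoid cancellation --- are routine.
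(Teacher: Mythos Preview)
Your approach is more hands-on than the paper's: the paper simply uses \cref{lem:DL-geo-series-compare} to rewrite $\mathcal{E}_0(\widetilde{\bG}^F,\widetilde{s},a^{\star})$ and $\mathcal{E}_0(\bG^F,s,a^{\star})$ as ordinary rational series $\mathcal{E}(\widetilde{\bG}^F,\widetilde{t})$ and $\mathcal{E}(\bG^F,t)$ (choosing $g\in\widetilde{\bG}^{\star}$ so that $\widetilde{t}={}^g\widetilde{s}$ is $F^{\star}$-fixed and $t=\iota^{\star}(\widetilde{t})$), and then quotes \cite[11.7]{bonnafe:2006:sln}. Your argument for (b) is essentially correct and amounts to reproving part of that reference in the present language; the lift of $w^{\star}$ to $T_{\widetilde W^{\star}}(\widetilde s,F^{\star})$ and the resulting restriction identity are exactly what is needed, and the Frobenius-reciprocity step ``some constituent $\widetilde\chi$ of $R^{\widetilde\bG}_{w_1^{\star}}(\widetilde s)$ restricts to contain $\chi$'' is sound.

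There is, however, a genuine gap in your argument for (a). From $\langle\widetilde\chi,R^{\widetilde\bG}_{w_1^{\star}}(\widetilde s)\rangle\neq 0$ and $\chi$ a constituent of $\Res\widetilde\chi$, you cannot conclude $\langle\chi,R^{\bG}_{w^{\star}}(s)\rangle\neq 0$: distinct irreducible constituents $\widetilde\chi_i$ of $R^{\widetilde\bG}_{w_1^{\star}}(\widetilde s)$ can have identical restriction to $\bG^F$ (namely when they differ by a linear character of $\widetilde{\bG}^F/\bG^F$ whose dual central element fixes the class of $\widetilde s$), and the signed multiplicities may then cancel. Your appeal to ``disjointness of the Lusztig series of $\widetilde\bG^F$'' does not help here, since all constituents of a single Deligne--Lusztig character already lie in the \emph{same} series. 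Also, the clause ``since $\mathcal{A}_{\widetilde W^{\star}}(\widetilde s,F^{\star})$ is a singleton, $\widetilde\chi$ occurs in $R^{\widetilde\bG}_{w_1^{\star}}(\widetilde s)$'' for a \emph{fixed} $w_1^{\star}$ is not justified; the singleton condition only says every $w^{\star}\in T_{\widetilde W^{\star}}(\widetilde s,F^{\star})$ lies in the same $W^{\star\circ}(s)$-coset, not that $\widetilde\chi$ appears in the DL character for your chosen $w_1^{\star}$. The clean fix is exactly the paper's: pass via \cref{lem:DL-geo-series-compare} to ordinary rational series and invoke the known compatibility of restriction with Lusztig series under a regular embedding; alternatively, deduce (a) from (b) together with the disjointness of the $\mathcal{E}_0(\bG^F,s',b^{\star})$ and Clifford theory, rather than from a single restriction identity.
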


\begin{proof}
Let $g \in \widetilde{\bG}$ be such that $\overline{g^{-1}F^{\star}(g)} \in A_{\widetilde{\bG}^{\star}}(\widetilde{s},F^{\star})$ corresponds to $a^{\star} \in \mathcal{A}_{\widetilde{W}^{\star}}(\widetilde{s},F^{\star})$ and set $\widetilde{t} = {}^g\widetilde{s}$ and $t = \iota^{\star}(\widetilde{t}) = {}^{\iota^{\star}(g)}s$. By \cref{lem:DL-geo-series-compare} we have $\mathcal{E}(\widetilde{\bG}^F,\widetilde{t}) = \mathcal{E}_0(\widetilde{\bG}^F,\widetilde{s})$ and $\mathcal{E}(\bG^F,t) = \mathcal{E}_0(\bG^F,s,a^{\star})$. The statement is now \cite[11.17]{bonnafe:2006:sln}.
\end{proof}

\section{Generalised Gelfand--Graev Representations (GGGRs)}\label{sec:GGGRs}
\begin{assumption}
From this point forward we assume that $p$ is a good prime for $\bG$ and $F$ is a Frobenius endomorphism endowing $\bG$ with an $\mathbb{F}_q$-rational structure. Moreover, we assume that $\bG$ is proximate in the sense of \cite[2.10]{taylor:2016:GGGRs-small-characteristics}. Recall this means a simply connected covering of the derived subgroup of $\bG$ is a separable morphism.
\end{assumption}

\begin{pa}
If $r = p^a$, with $a \geqslant 0$ an integer, then we denote by $F_r : \mathbb{K} \to \mathbb{K}$ the Frobenius endomorphism defined by $F_r(k) = k^r$; note that $F_1$ is the identity. Moreover, we define $\Frob_r(\mathcal{G}) \subseteq \Iso(\mathcal{G})$ to be the set of all isogenies $\sigma \in \Iso(\mathcal{G})$ such that $X(\sigma) = r\tau$ where $\tau : X(\bT_0) \to X(\bT_0)$ is a finite order automorphism preserving the set of simple roots $\Delta$. Moreover, we set $\Frob(\mathcal{G}) = \bigcup_{a \geqslant 0} \Frob_{p^a}(\mathcal{G})$. Any element of $\Frob_1(\mathcal{G})$ is an automorphism of $\bG$ and any element of $\Frob(\mathcal{G})\setminus \Frob_1(\mathcal{G})$ is a Frobenius endomorphism. Moreover, up to composition with an inner automorphism, $\Frob(\mathcal{G})$ contains every Frobenius endomorphism and automorphism of $\bG$. We write $\Frob(\mathcal{G},F) \subseteq \Frob(\mathcal{G})$ and $\Frob_r(\mathcal{G},F) \subseteq \Frob_r(\mathcal{G})$ for the subset of isogenies commuting with $F$.
\end{pa}

\begin{pa}
We will denote by $\lie{g}$ the Lie algebra of $\bG$ which we define to be the set of all $\mathbb{K}$-derivations $\Der_{\mathbb{K}}(\mathbb{K}[\bG],\mathbb{K})$, where $\mathbb{K}[\bG]$ is the affine algebra of $\bG$ and $\mathbb{K}$ is considered as a $\mathbb{K}[\bG]$-module via $f\cdot k = f(1)k$. We wish to now describe how an element $\sigma \in \Frob(\mathcal{G})$ induces a corresponding map of the Lie algebra. First, let us assume $\sigma \in \Frob_r(\mathcal{G})$ is a Frobenius endomorphism, so $r > 1$, then there exists an $\mathbb{F}_r$-subalgebra $\mathbb{K}[\bG]_{\sigma} = \{f \in \mathbb{K}[\bG] \mid \sigma^{\star}(f) = f^r\} \subseteq \mathbb{K}[\bG]$, where $\sigma^* : \mathbb{K}[\bG] \to \mathbb{K}[\bG]$ is the corresponding comorphism, such that the natural product map $\mathbb{K} \otimes_{\mathbb{F}_r} \mathbb{K}[\bG]_{\sigma} \to \mathbb{K}[\bG]$ is an isomorphism. If we set $\lie{g}_{\sigma}(\mathbb{F}_r) = \Der(\mathbb{K}[\bG]_{\sigma},\mathbb{F}_r)$ then we have an isomorphism $\lie{g} \cong \mathbb{K} \otimes_{\mathbb{F}_r} \lie{g}_{\sigma}(\mathbb{F}_r)$. With this isomorphism we can define a Frobenius endomorphism on $\lie{g}$, which we also denote by $\sigma : \lie{g} \to \lie{g}$. This endomorphism is of the form $\sigma = F_r \otimes \psi$, where $\psi : \lie{g}_{\sigma}(\mathbb{F}_r) \to \lie{g}_{\sigma}(\mathbb{F}_r)$ is such that the corresponding comorphism $\psi^* : \mathbb{K}[\lie{g}_{\sigma}(\mathbb{F}_r)] \to \mathbb{K}[\lie{g}_{\sigma}(\mathbb{F}_r)]$ is given by $\psi^*(f) = f^r$. Note via this process we have $F : \bG \to \bG$ induces a Frobenius endomorphism $F : \lie{g} \to \lie{g}$.
\end{pa}

\begin{pa}
Now if $\sigma \in \Frob_1(\mathcal{G})$ is an automorphism then the differential $\mathrm{d}\sigma : \lie{g} \to \lie{g}$ is an automorphism of the Lie algebra; we take this differential to be our induced map which we will also sloppily denote by $\sigma$. If $\sigma$ is a Frobenius endomorphism then the differential $\mathrm{d}\sigma$ is $0$; thus these two definitions of the induced map are different. We will denote by $\widecheck{X}(\bG)$ the set of all cocharacters $\lambda : \mathbb{K}^{\times} \to \bG$. To each such cocharacter $\lambda \in \widecheck{X}(\bG)$ we have a corresponding parabolic subgroup $\bP(\lambda)$ with unipotent radical $\bU(\lambda)$ and Levi complement $\bL(\lambda) = C_{\bG}(\lambda(\mathbb{K}^{\times}))$, see \cite[8.4.5]{springer:2009:linear-algebraic-groups}.
\end{pa}

\begin{pa}
Let us denote by $\mathcal{U} \subseteq \bG$ the unipotent variety and by $\mathcal{N} \subseteq \lie{g}$ the nilpotent cone. A Springer isomorphism $\phi_{\mathrm{spr}} : \mathcal{U} \to \mathcal{N}$ is a $\bG$-equivariant isomorphism of varieties, where the $\bG$-action on $\mathcal{U}$ is by conjugation and the $\bG$-action on $\mathcal{N}$ is via the adjoint representation; this commutes with the Frobenius $F$. In \cite[4.6]{taylor:2016:GGGRs-small-characteristics} it is shown, assuming that $p$ is a good prime and $\bG$ is proximate, that there exists a Springer isomorphism $\phi_{\mathrm{spr}} : \mathcal{U} \to \mathcal{N}$ whose restriction to each $\bU(\lambda)$ is a Kawanaka isomorphism, in the sense of \cite[4.1]{taylor:2016:GGGRs-small-characteristics}. We will need the following slight modification of the statements in \cite{taylor:2016:GGGRs-small-characteristics}; we omit the proof.
\end{pa}

\begin{prop}
We may assume $\phi_{\mathrm{spr}}$ satisfies $\phi_{\mathrm{spr}}\circ\sigma = \sigma\circ\phi_{\mathrm{spr}}$ for any $\sigma \in \Frob(\mathcal{G})$.
\end{prop}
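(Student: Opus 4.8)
The plan is to reduce the equivariance to commutation with a finite explicit family of isogenies and then to observe that the construction of \cite[4.6]{taylor:2016:GGGRs-small-characteristics} may be carried out compatibly with that family. For the reduction, note first that a $\bG$-equivariant morphism $\psi : \mathcal{U}\to\mathcal{N}$ automatically commutes with every inner automorphism $\imath_g$, $g\in\bG$: the endomorphism of $\lie{g}$ induced by $\imath_g$ is $\mathrm{Ad}(g)$, and $\bG$-equivariance of $\psi$ is exactly the identity $\psi(gxg^{-1})=\mathrm{Ad}(g)\psi(x)$. Combining this with the isogeny theorem (\cref{thm:isogeny}), whose fibres are the $\bT_0$-orbits under $\sigma\mapsto\sigma\circ\imath_t^{-1}$, and with the fact that $\mathcal{U}$, $\mathcal{N}$, and the maps induced on them by any $\sigma\in\Frob(\mathcal{G})$ depend only on the restriction of $\sigma$ to the derived subgroup of $\bG$ (so that only the $p$-powers and the finite group $\Gamma$ of diagram automorphisms intervene), one sees that it suffices to produce a Springer isomorphism with the Kawanaka property on each $\bU(\lambda)$ commuting with one isogeny for each pair $(p^a,\tau)$, $a\geqslant 0$ and $\tau\in\Gamma$. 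Fixing a split Frobenius $\phi_0$ with $X(\phi_0)=p\cdot\ID$ and, for $\tau$ in a generating set of $\Gamma$, an isogeny $\gamma_\tau\in\Frob_1(\mathcal{G})$ with $X(\gamma_\tau)=\tau$, one has $X(\phi_0^a\circ\gamma_\tau)=p^a\tau$; so it is enough to arrange that $\phi_{\mathrm{spr}}$ commutes with $\phi_0$ and with each $\gamma_\tau$ — a finite list of conditions.

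To meet these I would revisit the construction of \cite[4.6]{taylor:2016:GGGRs-small-characteristics}, which produces $\phi_{\mathrm{spr}}$ from the choice of a pinning of $\bG$ — the fixed maximal torus and Borel together with root vectors $e_\alpha$, $\alpha\in\Delta$ — and the structure constants of $\bG$. I would choose the $e_\alpha$ to lie in the $\mathbb{F}_p$-rational structure $\lie{g}_{\phi_0}(\mathbb{F}_p)$ fixed by $\phi_0$ and, at the same time, to be permuted by the chosen $\gamma_\tau$; this is possible since $\Gamma$ is finite and acts on the one-dimensional weight spaces $\lie{g}_\alpha$. Running the construction with such a pinning, $\phi_{\mathrm{spr}}$ is defined over $\mathbb{F}_p$, hence commutes with $\phi_0$; and since $\gamma_\tau$ permutes the root subgroups $\bX_\alpha$, the cocharacters $\lambda$, and hence the groups $\bU(\lambda)$ while fixing the pinning, the conjugate $\gamma_\tau\circ\phi_{\mathrm{spr}}\circ\gamma_\tau^{-1}$ is produced by the same recipe from the same data and therefore equals $\phi_{\mathrm{spr}}$, that is, $\phi_{\mathrm{spr}}\circ\gamma_\tau=\gamma_\tau\circ\phi_{\mathrm{spr}}$. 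Together with the reduction above, this proves the proposition.

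The hard part is the last step: one must enter the construction of \cite[4.6]{taylor:2016:GGGRs-small-characteristics} and verify that it depends only on the indicated $\mathbb{F}_p$-rational and $\Gamma$-stable data — in particular that the Kawanaka isomorphisms attached to the various $\bU(\lambda)$ may be chosen coherently under automorphisms and under Frobenius, which rests on the explicit formulae on root subgroups and their transformation under isogenies. One could instead try to show that a Springer isomorphism with the Kawanaka property is uniquely determined once suitable normalisations are imposed, making the equivariance automatic; but establishing such a uniqueness statement seems no easier.
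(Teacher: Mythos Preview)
The paper explicitly omits the proof of this proposition (``we omit the proof''), so there is no argument in the paper to compare your proposal against. Your outline is essentially the natural one and very likely what the author had in mind: reduce to a finite generating family via $\bG$-equivariance plus the isogeny theorem, then choose an $\mathbb{F}_p$-rational, $\Gamma$-stable pinning and observe that the construction of \cite[4.6]{taylor:2016:GGGRs-small-characteristics} is carried out entirely in terms of such data.

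The reduction step is sound. A $\bG$-equivariant morphism $\mathcal{U}\to\mathcal{N}$ automatically intertwines $\imath_g$ and $\mathrm{Ad}(g)$; by \cref{thm:isogeny} any two elements of $\Frob(\mathcal{G})$ inducing the same $p^a\tau$ on $X(\bT_0)$ differ by such an inner automorphism; and commutation with $\phi_0$ and the finitely many $\gamma_\tau$ then gives commutation with all compositions $\phi_0^a\gamma_\tau$. One small point: to pass from ``commutes with $\phi_0$ and with each $\gamma_\tau$'' to ``commutes with $\phi_0^a\gamma_\tau$'' you are implicitly using that these operators literally compose on both $\mathcal{U}$ and $\mathcal{N}$ in the expected way, which they do by functoriality of the induced maps on the Lie algebra.

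The acknowledged gap is the honest one: you must check that the Springer isomorphism built in \cite[4.6]{taylor:2016:GGGRs-small-characteristics} really is determined by the pinning, so that an $\mathbb{F}_p$-rational, $\Gamma$-stable choice forces $\phi_{\mathrm{spr}}$ to be fixed by $\phi_0$ and by the $\gamma_\tau$. This is not difficult but does require opening that reference; the relevant point is that the isomorphism there is constructed from Chevalley-basis data and explicit formulae on root subgroups, all of which are defined over $\mathbb{F}_p$ and permuted by diagram automorphisms once the pinning is. Since the paper omits the proof as a routine modification of \cite{taylor:2016:GGGRs-small-characteristics}, your sketch is at the right level of detail and your identification of where the work lies is accurate.
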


\begin{pa}
Now assume $\sigma \in \Frob_r(\mathcal{G})$ then for any cocharacter $\lambda \in \widecheck{X}(\bG)$ we define a new cocharacter $\sigma\cdot\lambda \in \widecheck{X}(\bG)$ by setting
%%%%
\begin{equation*}
(\sigma\cdot\lambda)(k) = \sigma(\lambda(F_r^{-1}(k)))
\end{equation*}
%%%%
for all $k \in \mathbb{K}^{\times}$. We denote by $\widecheck{X}(\bG)^{\sigma}$ those cocharacters satisfying $\sigma\cdot\lambda = \lambda$. Recall that each cocharacter $\lambda \in \widecheck{X}(\bG)$ determines a grading $\lie{g} = \oplus_{i \in \mathbb{Z}} \lie{g}(\lambda,i)$ of the Lie algebra. For each integer $i > 0$ we have a Lie subalgebra $\lie{u}(\lambda,i) = \oplus_{j \geqslant i} \lie{g}(\lambda,j)$ and a corresponding closed connected subgroup $\bU(\lambda,i) \leqslant \bU(\lambda)$ whose Lie algebra is $\lie{u}(\lambda,i)$. The Levi subgroup $\bL(\lambda)$ acts on $\lie{g}(\lambda,2)$ and there is a unique open dense orbit which we denote by $\lie{g}(\lambda,2)_{\reg}$. Moreover, if $\sigma \in \Frob(\mathcal{G})$ then we have $\sigma(\bU(\lambda,i)) = \bU(\sigma\cdot\lambda,i)$ and $\sigma(\lie{u}(\lambda,i)) = \lie{u}(\sigma\cdot\lambda,i)$. In particular, if $\lambda \in \widecheck{X}(\bG)^{\sigma}$ then $\sigma(\bU(\lambda,i)) = \bU(\lambda,i)$ and $\sigma(\lie{u}(\lambda,i)) = \lie{u}(\lambda,i)$.
\end{pa}

\begin{pa}
Now assume $u \in \mathcal{U}^F$ is a rational unipotent element and $e = \phi_{\mathrm{spr}}(u) \in \mathcal{N}^F$ is the corresponding rational nilpotent element. In \cite[\S3]{taylor:2016:GGGRs-small-characteristics} we have defined a set of cocharacters $\mathcal{D}(\bG) \subseteq \widecheck{X}(\bG)$ which is invariant under the map $\lambda \mapsto \sigma\cdot\lambda$ for any $\sigma \in \Frob(\mathcal{G})$. Moreover, there exists an $F$-fixed cocharacter $\lambda \in \mathcal{D}(\bG)^F$, which is unique up to $\bG^F$-conjugacy, such that $e \in \lie{g}(\lambda,2)_{\reg}$. As above we have $F(\bU(\lambda,2)) = \bU(\lambda,2)$ and we wish to define a linear character $\varphi_u : \bU(\lambda,2)^F \to \Ql$. With this in mind let $\kappa : \lie{g} \times \lie{g} \to \mathbb{K}$ be a $\bG$-invariant trace form which is not too degenerate in the sense of \cite[5.6]{taylor:2016:GGGRs-small-characteristics}. We may assume that $\kappa$ satisfies $\kappa(\sigma(X),\sigma(Y)) = F_r(\kappa(X,Y))$ for any $\sigma \in \Frob_r(\mathcal{G})$, see \cite[I, 5.3]{springer-steinberg:1970:conjugacy-classes} and \cite[5.6]{taylor:2016:GGGRs-small-characteristics}. Note that if $\sigma \in \Frob_1(\mathcal{G})$ then the invariance of $\kappa$ under $\sigma$ follows from the fact that it is a trace form. Let us furthermore assume ${}^{\dag} : \lie{g} \to \lie{g}$ is the map defined in \cite[5.2]{taylor:2016:GGGRs-small-characteristics}. This is then an $\mathbb{F}_r$-opposition automorphism, for any prime power $r = p^a > 1$, in the sense of \cite[5.1]{taylor:2016:GGGRs-small-characteristics}. Moreover, we have $\sigma(X^{\dag}) = \sigma(X)^{\dag}$ for all $X \in \lie{g}$ and $\sigma \in \Frob(\mathcal{G})$.
\end{pa}

\begin{pa}
We now assume chosen once and for all a fixed linear character $\chi_p :  \mathbb{F}_p^+ \to \Ql^{\times}$ of the finite field $\mathbb{F}_p$ viewed as an additive group. For any power $r$ of $p$ we then obtain a linear character $\chi_r : \mathbb{F}_r^+ \to \Ql^{\times}$ defined by $\chi_r = \chi_p \circ \Tr_{\mathbb{F}_r/\mathbb{F}_p}$ where $\Tr_{\mathbb{F}_r/\mathbb{F}_p} : \mathbb{F}_r \to \mathbb{F}_p$ is the field trace. We note that as $\Tr_{\mathbb{F}_r/\mathbb{F}_p}\circ F_p = \Tr_{\mathbb{F}_r/\mathbb{F}_p}$ we have $\chi_r \circ F_p = \chi_r$. With this we define the linear character $\varphi_u : \bU(\lambda,2)^F \to \Ql$ by setting
%%%%
\begin{equation*}
\varphi_u(x) = \chi_q(\kappa(e^{\dag},\phi_{\mathrm{spr}}(x)))
\end{equation*}
%%%%
for any $x \in \bU(\lambda,2)^F$. This is a linear character because the restriction of $\phi_{\mathrm{spr}}$ to $\bU(\lambda,2)$ is a Kawanaka isomorphism, see \cite[\S4]{taylor:2016:GGGRs-small-characteristics}.
\end{pa}

\begin{definition}
If $\Ind_{\bU(\lambda,2)^F}^{\bG^F}(\varphi_u)$ denotes the induction of the linear character then
%%%%
\begin{equation*}
\Gamma_u^{\bG^F} = q^{-\dim \lie{g}(\lambda,1)/2}\Ind_{\bU(\lambda,2)^F}^{\bG^F}(\varphi_u)
\end{equation*}
%%%%
is the character of a representation of $\bG^F$ known as a \emph{generalised Gelfand--Graev representation (GGGR)}, see \cite[\S5]{taylor:2016:GGGRs-small-characteristics}.
\end{definition}

\begin{prop}\label{prop:invariant-GGGRs}
For any rational unipotent element $u \in \mathcal{U}^F$ and bijective isogeny $\sigma \in \Frob(\mathcal{G},F)$ we have ${}^{\sigma}\Gamma_u^{\bG^F} = \Gamma_{\sigma(u)}^{\bG^F}$. Furthermore, if $u \in \mathcal{U}^F$ and $\sigma(u)$ are $\bG^F$-conjugate then ${}^{\sigma}\Gamma_u^{\bG^F} = \Gamma_u^{\bG^F}$.
\end{prop}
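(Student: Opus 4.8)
The plan is to work directly from the defining formula $\Gamma_u^{\bG^F} = q^{-\dim\lie{g}(\lambda,1)/2}\Ind_{\bU(\lambda,2)^F}^{\bG^F}(\varphi_u)$, where $\varphi_u(x) = \chi_q(\kappa(e^{\dag},\phi_{\mathrm{spr}}(x)))$, and to track how $\sigma$ interacts with each ingredient: the adapted cocharacter $\lambda$, the Springer isomorphism $\phi_{\mathrm{spr}}$, the trace form $\kappa$, the opposition map ${}^{\dag}$, and the additive character $\chi_q$. Write $\sigma \in \Frob_r(\mathcal{G},F)$ for some $r$; being a bijective isogeny commuting with $F$, the map $\sigma$ restricts to an automorphism of the finite group $\bG^F$ and to the induced map on $\lie{g}$ recalled in \cref{sec:GGGRs}, the latter commuting with $\phi_{\mathrm{spr}}$ by the preceding proposition. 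Set $e = \phi_{\mathrm{spr}}(u) \in \lie{g}(\lambda,2)_{\reg}$ with $\lambda \in \mathcal{D}(\bG)^F$.

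The first step is to show that $\sigma\cdot\lambda$ is a cocharacter adapted to $\sigma(u)$. Since $\mathcal{D}(\bG)$ is stable under $\mu \mapsto \sigma\cdot\mu$ and the $\cdot$-action is a left action, $F\cdot(\sigma\cdot\lambda) = (F\sigma)\cdot\lambda = (\sigma F)\cdot\lambda = \sigma\cdot(F\cdot\lambda) = \sigma\cdot\lambda$, so $\sigma\cdot\lambda \in \mathcal{D}(\bG)^F$. One then checks that $\sigma$ carries $\lie{g}(\lambda,i)$ onto $\lie{g}(\sigma\cdot\lambda,i)$: using the equivariance $\sigma\circ\mathrm{Ad}(g) = \mathrm{Ad}(\sigma(g))\circ\sigma$ on $\lie{g}$ and the definition $(\sigma\cdot\lambda)(k) = \sigma(\lambda(F_r^{-1}(k)))$, the eigenvalue $k^i$ of $\mathrm{Ad}(\lambda(k))$ on $\lie{g}(\lambda,i)$ transports to the eigenvalue $k^i$ of $\mathrm{Ad}((\sigma\cdot\lambda)(k))$ on $\sigma(\lie{g}(\lambda,i))$, the $F_r^{-1}$-twist being exactly what compensates for the semilinearity of $\sigma$ on $\lie{g}$. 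Since $\bL(\sigma\cdot\lambda) = \sigma(\bL(\lambda))$, the open dense orbit $\lie{g}(\lambda,2)_{\reg}$ is mapped onto $\lie{g}(\sigma\cdot\lambda,2)_{\reg}$, and as $\phi_{\mathrm{spr}}$ commutes with $\sigma$ we get $\sigma(e) = \phi_{\mathrm{spr}}(\sigma(u)) \in \lie{g}(\sigma\cdot\lambda,2)_{\reg}$. The GGGR being independent of the choice of adapted cocharacter, I may then compute $\Gamma_{\sigma(u)}^{\bG^F}$ using $\sigma\cdot\lambda$; note also $\dim\lie{g}(\sigma\cdot\lambda,1) = \dim\lie{g}(\lambda,1)$, since $\sigma$ restricts to an isogeny $\bU(\lambda,1)\to\bU(\sigma\cdot\lambda,1)$ carrying $\bU(\lambda,2)$ onto $\bU(\sigma\cdot\lambda,2)$, so the normalising powers of $q$ match.

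The second step uses the standard identity ${}^{\sigma}\Ind_H^{\bG^F}(\varphi) = \Ind_{\sigma(H)}^{\bG^F}({}^{\sigma}\varphi)$, valid because $\sigma$ is a group automorphism of $\bG^F$, with $H = \bU(\lambda,2)^F$ and $\sigma(H) = \bU(\sigma\cdot\lambda,2)^F$. This reduces ${}^{\sigma}\Gamma_u^{\bG^F} = \Gamma_{\sigma(u)}^{\bG^F}$ to the equality ${}^{\sigma}\varphi_u = \varphi_{\sigma(u)}$ of linear characters of $\bU(\sigma\cdot\lambda,2)^F$. To verify it, write $x = \sigma(y)$ with $y \in \bU(\lambda,2)^F$, so $\phi_{\mathrm{spr}}(x) = \sigma(\phi_{\mathrm{spr}}(y))$; then, using $\sigma(e^{\dag}) = \sigma(e)^{\dag} = \phi_{\mathrm{spr}}(\sigma(u))^{\dag}$ and $\kappa(\sigma(X),\sigma(Y)) = F_r(\kappa(X,Y))$, one gets $\kappa(\phi_{\mathrm{spr}}(\sigma(u))^{\dag},\phi_{\mathrm{spr}}(x)) = F_r(\kappa(e^{\dag},\phi_{\mathrm{spr}}(y)))$. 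As this quantity lies in $\mathbb{F}_q$ and $\chi_q\circ F_r = \chi_q$ on $\mathbb{F}_q$ (which follows from $\Tr_{\mathbb{F}_q/\mathbb{F}_p}\circ F_p = \Tr_{\mathbb{F}_q/\mathbb{F}_p}$), we conclude $\varphi_{\sigma(u)}(x) = \chi_q(\kappa(e^{\dag},\phi_{\mathrm{spr}}(y))) = \varphi_u(y) = ({}^{\sigma}\varphi_u)(x)$. This proves ${}^{\sigma}\Gamma_u^{\bG^F} = \Gamma_{\sigma(u)}^{\bG^F}$. For the remaining assertion, GGGRs depend only on the $\bG^F$-conjugacy class of the unipotent element, so if $u$ and $\sigma(u)$ are $\bG^F$-conjugate then $\Gamma_{\sigma(u)}^{\bG^F} = \Gamma_u^{\bG^F}$, and hence ${}^{\sigma}\Gamma_u^{\bG^F} = \Gamma_u^{\bG^F}$.

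The step I expect to require the most care is the first one — verifying that $\sigma\cdot\lambda$ is genuinely adapted to $\sigma(u)$, i.e., that $\sigma$ intertwines the two $\mathbb{Z}$-gradings and the two notions of regularity in $\lie{g}(-,2)$. The subtlety is that when $r>1$ the induced endomorphism of $\lie{g}$ is $F_r$-semilinear rather than $\mathbb{K}$-linear, so the eigenvalue bookkeeping only works out because of the $F_r^{-1}$-twist built into the $\cdot$-action; once this and the compatibilities collected in \cref{sec:GGGRs} are in place, the rest of the argument is formal.
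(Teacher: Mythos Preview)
Your proof is correct and follows essentially the same approach as the paper: both arguments verify that $\sigma\cdot\lambda \in \mathcal{D}(\bG)^F$ with $\sigma(e) \in \lie{g}(\sigma\cdot\lambda,2)_{\reg}$, transport the induction via ${}^{\sigma}\Ind_{\bU(\lambda,2)^F}^{\bG^F}(\varphi_u) = \Ind_{\bU(\sigma\cdot\lambda,2)^F}^{\bG^F}({}^{\sigma}\varphi_u)$, and then check ${}^{\sigma}\varphi_u = \varphi_{\sigma(u)}$ using the listed compatibilities of $\phi_{\mathrm{spr}}$, $\kappa$, ${}^{\dag}$, and $\chi_q$, together with the dimension equality $\dim\lie{g}(\lambda,1) = \dim\lie{g}(\sigma\cdot\lambda,1)$. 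Your write-up is in fact more explicit than the paper's about the semilinearity issue and the grading transport, but the underlying argument is the same.
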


\begin{proof}
We have $\sigma(e) \in \lie{g}(\sigma\cdot\lambda,2)_{\reg}$ and $\sigma\cdot\lambda \in \mathcal{D}(\bG)^F$ is $F$-fixed, because $\sigma$ commutes with $F$, so certainly
%%%%
\begin{equation*}
{}^{\sigma}\Ind_{\bU(\lambda,2)^F}^{\bG^F}(\varphi_u) = \Ind_{\sigma(\bU(\lambda,2)^F)}^{\bG^F}({}^{\sigma}\varphi_u) = \Ind_{\bU(\sigma\cdot\lambda,2)^F}^{\bG^F}({}^{\sigma}\varphi_u).
\end{equation*}
%%%%
For any $x \in \bU(\sigma\cdot\lambda,2)^F$ we have
%%%%
\begin{equation*}
{}^{\sigma}\varphi_u(x) = \chi_q(\kappa(e^{\dag},\phi_{\mathrm{spr}}(\sigma^{-1}(x)))) = \chi_q(\kappa(\sigma(e)^{\dag},\phi_{\mathrm{spr}}(x))) = \varphi_{\sigma(u)}(x),
\end{equation*}
%%%%
where the last equalities follow from the properties of $\kappa$, $\phi_{\mathrm{spr}}$, $\dag$, and $\chi_q$ listed above. As $\sigma \in \Frob_r(\mathcal{G},F)$ is bijective and $F_r$-semilinear on $\lie{g}$ we see that $\lie{g}(\lambda,1)$ and $\lie{g}(\sigma\cdot\lambda,1)$ have the same dimension. From this we conclude that ${}^{\sigma}\Gamma_u^{\bG^F} = \Gamma_{\sigma(u)}^{\bG^F}$. The second statement follows from the fact that $\Gamma_u^{\bG^F} = \Gamma_{gug^{-1}}^{\bG^F}$ for any $g \in \bG^F$, which follows easily from the construction of $\Gamma_u^{\bG^F}$.
\end{proof}

\section{Symplectic Groups}\label{sec:auts}
\begin{assumption}
From this point forward we assume that $\bG$ is the symplectic group $\Sp_{2n}(\mathbb{K})$ defined over $\mathbb{K} = \overline{\mathbb{F}}_p$ with $p$ an odd prime. The underlying alternating bilinear form defining $\Sp_{2n}(\mathbb{K})$ is chosen to be that of \cite[1.3.15]{geck:2003:intro-to-algebraic-geometry}.
\end{assumption}

\begin{pa}\label{pa:symp-setup}
We set $\mathcal{G} = (\bG,\bB_0,\bT_0)$, where $\bT_0 \leqslant \bB_0 \leqslant \bG$ are the maximal torus of diagonal matrices and Borel subgroup of upper triangular matrices respectively. A dual triple $\mathcal{G}^{\star} = (\bG^{\star},\bB_0^{\star},\bT_0^{\star})$ is obtained by taking $\bG^{\star}$ to be the special orthogonal group $\SO_{2n+1}(\mathbb{K})$, defined with respect to the underlying symmetric bilinear form given in \cite[1.3.15]{geck:2003:intro-to-algebraic-geometry}. Moreover $\bT_0^{\star} \leqslant \bB_0^{\star}$ are again defined to be the maximal torus of diagonal matrices and Borel subgroup of upper triangular matrices respectively.
\end{pa}

\begin{pa}
We assume $F_p \in \Iso(\mathcal{G})$ and $F_p^{\star} \in \Iso(\mathcal{G}^{\star})$ are the Frobenius endomorphisms raising each matrix entry to the power $p$. Note that $X(F_p) = p$ on $X(\bT_0)$ and $X(F_p^{\star}) = p$ on $X(\bT_0^{\star})$ so these isogenies are dual. We now choose a regular embedding $\iota : \bG \to \widetilde{\bG}$, as in \cref{sec:regular-embeddings}, with respect to $F_p$. We will use all the notation for $\widetilde{\mathcal{G}}$ and $\widetilde{\mathcal{G}}^{\star}$ introduced in \cref{sec:regular-embeddings}.
\end{pa}

\begin{pa}\label{pa:weyl-group-reps}
To apply the results from the previous sections, in particular \cref{thm:equivariance-DL-induction}, we need to choose for each element $w \in W = W_{\bG}(\bT_0)$ a representative $n_w \in N_{\bG}(\bT_0)$. For each simple reflection $s_{\alpha}$, with $\alpha \in \Delta$, we choose the representative given in \cite[1.7.3]{geck:2003:intro-to-algebraic-geometry}. For a general element $w \in W$ we assume chosen a reduced expression $s_1\cdots s_r$, with each $s_i$ a simple reflection, and then set $n_w = n_{s_1}\cdots n_{s_r}$. Choosing the representatives in this way we have $F_p(n_w) = n_w$ for all $w \in W$.
\end{pa}

\begin{pa}\label{pa:frob-autos}
The Frobenius endomorphism $F_p$, resp., $F_p^{\star}$, generates a cyclic subgroup of $\Aut(\widetilde{\bG})$ and $\Aut(\bG)$, resp., $\Aut(\widetilde{\bG}^{\star})$ and $\Aut(\bG^{\star})$, all of whose non-identity elements are Frobenius endomorphisms. Through $\iota$, resp., $\iota^{\star}$, we have the subgroup of $\Aut(\widetilde{\bG})$, resp., $\Aut(\widetilde{\bG}^{\star})$, generated by $F_p$ is mapped isomorphically onto the corresponding subgroup of $\Aut(\bG)$, resp., $\Aut(\bG^{\star})$. For each prime power $r = p^a > 1$ we set $F_r = F_p^a$ and $F_r^{\star} = {F_p^{\star}}^a$; these are dual Frobenius endomorphisms. Moreover we fix a prime power $q$ and set $F = F_q$ and $F^{\star} = F_q^{\star}$.
\end{pa}

\begin{pa}\label{pa:autos-symp}
Now it's clear that we have $F_p \in \Aut(\widetilde{\bG},F)$ so the cyclic subgroup generated by $F_p$ determines a subgroup $D \leqslant \Aut(\widetilde{\bG}^F)$, which is the subgroup of \emph{field automorphisms}. As $F_p$ preserves $\bG^F$ we thus have $D$ is mapped isomorphically onto a subgroup of $\Aut(\bG^F)$ via $\iota$; we also denote this subgroup by $D$. The group $\bG^F$ is normal in $\widetilde{\bG}^F$ and we have an injective homomorphism $\widetilde{\bG}^F/\bG^FZ(\widetilde{\bG}^F) \to \Aut(\bG^F)$ whose image consists of the diagonal automorphisms of $\bG^F$. We note that, ccording to \cite[2.5.1]{gorenstein-lyons-solomon:1998:classification-3}, we have $\Aut(\bG^F) \cong (\widetilde{\bG}^F/Z(\widetilde{\bG}^F)) \rtimes D$.
\end{pa}

\section{The Springer Correspondence}\label{sec:springer}
\begin{assumption}
From this point forward we assume $\sigma = F_r \in \Iso(\mathcal{G},F)$ is as in \cref{pa:frob-autos}.
\end{assumption}

\begin{pa}\label{pa:uni-classes}
To understand the effect of the automorphism $\sigma$ on Kawanaka's generalised Gelfand--Graev representations we need to understand the conjugacy classes of $\bG$. With this in mind let us denote by $\mathcal{P}_1(2n)$ the set of all partitions $\lambda = (1^{r_1},2^{r_2},\dots) \vdash 2n$ such that $r_{2i+1} \equiv 0 \pmod{2}$ for each $1 \leqslant i \leqslant n$. To each unipotent element $u \in \mathcal{U}$ one associates a partition $\lambda \in \mathcal{P}_1(2n)$ which is given by the sizes of the Jordan blocks in the Jordan normal form of $u$. The map $\mathcal{U} \to \mathcal{P}_1(2n)$ sending $u \mapsto \lambda$ then induces a bijection $\mathcal{U}/\bG \to \mathcal{P}_1(2n)$, see \cite[Corollary 3.6]{liebeck-seitz:2012:unipotent-nilpotent-classes}. We will need the following two lemmas concerning unipotent elements of $\bG$.
\end{pa}

\begin{lem}\label{lem:conj-unipotent-elms}
If $u \in \mathcal{U}^F$ is a rational unipotent element then $\sigma(u)$ and $u$ are $\bG^F$-conjugate.
\end{lem}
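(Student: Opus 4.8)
The plan is to reduce the claim, via the parametrisation of \cref{pa:prop-rat-orb-param}, to the statement that $\sigma$ acts trivially on a certain component group. First I would observe that $\sigma$ fixes the geometric unipotent class of $u$. Indeed, in the matrix realisation the isogeny $\sigma = F_r$ is the map raising each entry to the $r$-th power; since the $p$-power map is additive in characteristic $p$ and $r$ is a power of $p$, this is a ring endomorphism of $\mathrm{M}_{2n}(\mathbb{K})$, and it preserves the rank of a matrix (a $\rho\times\rho$ minor of $\sigma(M)$ is the $r$-th power of the corresponding minor of $M$). Hence $(\sigma(u)-1)^k = \sigma((u-1)^k)$ has the same rank as $(u-1)^k$ for all $k$, so $\sigma(u)$ and $u$ have the same Jordan type; by the bijection $\mathcal{U}/\bG \to \mathcal{P}_1(2n)$ of \cref{pa:uni-classes} they lie in a common geometric class $\mathcal{O}$. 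The same argument applied to $F_p$ shows $\mathcal{O}$ is $F_p$-stable, hence $F$-stable and $\sigma$-stable, since $F = F_q$ and $\sigma = F_r$ are powers of $F_p$ (\cref{pa:frob-autos}); clearly $u,\sigma(u)\in\mathcal{O}^F$. (Alternatively, that $\sigma$ fixes every unipotent class follows since $\sigma\in\Frob_r(\mathcal{G})$ induces the identity on the Dynkin diagram.)

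Next I would fix a base point. By the Lang--Steinberg theorem applied to $F_p$ on the homogeneous space $\mathcal{O}$ there is $u_0\in\mathcal{O}^{F_p}$, and then $u_0$ is fixed by $F$ and by $\sigma$. Writing $u = {}^{g}u_0$ with $g\in\bG$ and $z = g^{-1}F(g)\in\Stab_{\bG}(u_0)=C_{\bG}(u_0)$, the class $[u]_{\bG^F}$ corresponds to $\overline{z}\in H^1(F,A_{\bG}(u_0,F))$ under \cref{pa:prop-rat-orb-param}. Since $\sigma(u_0)=u_0$ we get $\sigma(u) = {}^{\sigma(g)}u_0$ with $\sigma(g)^{-1}F(\sigma(g)) = \sigma(z)$, so $[\sigma(u)]_{\bG^F}$ corresponds to $\overline{\sigma(z)}$; this is exactly the content of \cref{lem:perm-rat-orbits} for the $\sigma$-fixed point $u_0$. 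Thus it suffices to show $\overline{\sigma(z)}=\overline{z}$ in $H^1(F,A_{\bG}(u_0,F))$, and for this it is enough that both $F$ and $\sigma$ act trivially on $A_{\bG}(u_0)$: the group $A_{\bG}(u_0)$ is then abelian with trivial $F$-action, so $H^1(F,A_{\bG}(u_0,F)) = A_{\bG}(u_0)$, on which $\sigma$ acts trivially.

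Finally, I would establish the triviality of the action of $F_p$ (hence of its powers $F$ and $\sigma$) on $A_{\bG}(u_0)$. Since $p$ is odd, $A_{\bG}(u_0)$ is an elementary abelian $2$-group whose nontrivial elements are represented by sign elements of $C_{\bG}(u_0)$; for the standard sum-of-Jordan-blocks representative $u_0$ these representatives may be taken with all entries in $\{0,\pm1\}$, hence fixed by $F_p$ (using $(-1)^{p}=-1$), so $F_p$ acts trivially on $A_{\bG}(u_0)$. Because $A_{\bG}(u_0)$ is abelian, this triviality is independent of the choice of $F_p$-fixed representative in $\mathcal{O}$, so it holds for our $u_0$; combining the three steps gives $[\sigma(u)]_{\bG^F} = [u]_{\bG^F}$. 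The only nonformal ingredient is the structure of $C_{\bG}(u_0)$ and $A_{\bG}(u_0)$ for unipotent $u_0$ in $\Sp_{2n}(\mathbb{K})$ with $p$ odd, for which I would cite \cite{liebeck-seitz:2012:unipotent-nilpotent-classes}; pinning this down is the main point, everything else being bookkeeping with \cref{pa:prop-rat-orb-param}.
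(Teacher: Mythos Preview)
Your proposal is correct and follows essentially the same approach as the paper: fix an $F_p$-stable (hence $\sigma$- and $F$-fixed) representative $u_0$ of the geometric class, invoke \cref{lem:perm-rat-orbits}, and use \cite{liebeck-seitz:2012:unipotent-nilpotent-classes} for the triviality of the $F_p$-action on $A_{\bG}(u_0)$. Your write-up is more explicit about why the Jordan type is preserved and why the triviality of the $F_p$-action is independent of the chosen $F_p$-fixed base point (using that $A_{\bG}(u_0)$ is abelian), but the argument is the same.
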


\begin{proof}
By the parameterisation in terms of the Jordan normal form it is clear that every unipotent conjugacy class $\mathcal{O} \in \mathcal{U}/\bG$ is $F_p$-stable. In particular, let $\mathcal{O}$ be the class containing $u$ and fix an element $u_0 \in \mathcal{O}^{F_p}$ then $u_0$ is fixed by $\sigma$ and $F$. Moreover, by \cite[7.1]{liebeck-seitz:2012:unipotent-nilpotent-classes} we may assume that $F_p$ acts trivially on the component group $A_{\bG}(u_0) = C_{\bG}(u_0)/C_{\bG}^{\circ}(u_0)$. Applying \cref{lem:perm-rat-orbits} we conclude that $\sigma(u)$ and $u$ are $\bG^F$-conjugate.
\end{proof}

\begin{lem}\label{lem:comp-grp}
Let $u \in \mathcal{U}$ be a unipotent element whose $\bG$-conjugacy class is parameterised by $\lambda = (1^{r_1},2^{r_2},\dots) \in \mathcal{P}_1(2n)$ then we have $A_{\widetilde{\bG}}(u) = C_{\widetilde{\bG}}(u)/C_{\widetilde{\bG}}^{\circ}(u) \cong (\mathbb{Z}/2\mathbb{Z})^{n(u)-\delta(u)}$ is an elementary abelian $2$-group where $n(u) = |\{i \in \mathbb{N} \mid r_{2i} \neq 0\}|$ and $\delta(u) = 1$ if $r_{2i} \equiv 1 \pmod{2}$ for some $i \in \mathbb{N}$ and $\delta(u) = 0$ otherwise.
\end{lem}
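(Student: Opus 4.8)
The plan is to reduce everything to the structure of the centraliser $C_{\bG}(u)$ inside $\bG = \Sp_{2n}(\mathbb{K})$ and then transport the answer to $\widetilde{\bG}$ through the regular embedding. First I would recall the standard description of the reductive part of $C_{\bG}(u)$: writing the natural module as $V = \bigoplus_m V_m \otimes M_m$, with $V_m$ the indecomposable $\mathbb{K}[u]$-module of dimension $m$ and $\dim M_m = r_m$, the fact that $p$ is odd means the alternating form on $V$ induces on each $M_m$ a nondegenerate form which is symplectic for $m$ odd and symmetric (orthogonal) for $m$ even; the former forces $r_m$ to be even, consistent with $\lambda \in \mathcal{P}_1(2n)$. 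Hence $C_{\bG}(u)/R_u(C_{\bG}(u)) \cong \prod_{m\text{ odd}}\Sp_{r_m}(\mathbb{K}) \times \prod_{m\text{ even}}\mathrm{O}_{r_m}(\mathbb{K})$, see \cite{liebeck-seitz:2012:unipotent-nilpotent-classes}. Since the symplectic factors and $R_u(C_{\bG}(u))$ are connected while each $\mathrm{O}_{r_m}$ with $r_m\neq 0$ contributes a single $\mathbb{Z}/2\mathbb{Z} \cong \mathrm{O}_{r_m}/\SO_{r_m}$, detected by the determinant, this gives $A_{\bG}(u) \cong (\mathbb{Z}/2\mathbb{Z})^{n(u)}$, with the factors indexed by the even parts of $\lambda$.

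Next I would pass from $\bG$ to $\widetilde{\bG}$. As $\iota:\bG\to\widetilde{\bG}$ is a regular embedding and $\bG$ is semisimple, $\widetilde{\bG} = \bG Z(\widetilde{\bG})$ with $Z(\widetilde{\bG})$ connected and $\bG\cap Z(\widetilde{\bG}) = Z(\bG) = \{\pm I\}$. Because $Z(\widetilde{\bG})$ is central we have $C_{\widetilde{\bG}}(u) = C_{\bG}(u)Z(\widetilde{\bG})$, and $C_{\bG}^{\circ}(u)Z(\widetilde{\bG})$ is a connected subgroup of finite index in it, hence equals $C_{\widetilde{\bG}}^{\circ}(u)$. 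A short computation with cosets then yields
\begin{equation*}
A_{\widetilde{\bG}}(u) \cong \frac{C_{\bG}(u)}{C_{\bG}^{\circ}(u)\bigl(C_{\bG}(u)\cap Z(\widetilde{\bG})\bigr)} = \frac{C_{\bG}(u)}{C_{\bG}^{\circ}(u)Z(\bG)} \cong A_{\bG}(u)\big/\langle\overline{-I}\rangle,
\end{equation*}
where $\overline{-I}$ denotes the image in $A_{\bG}(u)$ of the central involution $-I\in\bG$.

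Finally I would locate $\overline{-I}$. Since $-I$ acts as $-I_{r_m}$ on each multiplicity space $M_m$, its image in $A_{\bG}(u) \cong \prod_{m\text{ even}}\mathrm{O}_{r_m}/\SO_{r_m}$ has $m$-th coordinate $\det(-I_{r_m}) = (-1)^{r_m}$. Thus $\overline{-I}$ is trivial exactly when every even part of $\lambda$ occurs with even multiplicity, i.e.\ when $\delta(u)=0$, in which case $A_{\widetilde{\bG}}(u)\cong A_{\bG}(u)\cong(\mathbb{Z}/2\mathbb{Z})^{n(u)}$; and when $\delta(u)=1$ the element $\overline{-I}$ is a nontrivial element of the elementary abelian $2$-group $A_{\bG}(u)$, so quotienting by it drops the rank by exactly one and $A_{\widetilde{\bG}}(u)\cong(\mathbb{Z}/2\mathbb{Z})^{n(u)-1}$. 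In either case $A_{\widetilde{\bG}}(u)\cong(\mathbb{Z}/2\mathbb{Z})^{n(u)-\delta(u)}$, as claimed. The step that needs the most care is the first one — correctly attaching the orthogonal (rather than symplectic) factors to the even-sized Jordan blocks and keeping track of which element of $A_{\bG}(u)$ the scalar $-I$ represents; the remaining passage to $\widetilde{\bG}$ is a formal manipulation with the regular embedding of a kind that recurs in \cite{bonnafe:2006:sln}.
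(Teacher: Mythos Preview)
Your argument is correct and follows exactly the approach the paper takes: the paper's proof simply cites \cite[Theorem 3.1]{liebeck-seitz:2012:unipotent-nilpotent-classes} for the structure of $A_{\bG}(u)$ and then observes that the natural map $A_{\bG}(u)\to A_{\widetilde{\bG}}(u)$ is surjective with kernel the image of $Z(\bG)$. You have unpacked both steps---the Liebeck--Seitz description of the reductive quotient of $C_{\bG}(u)$ and the identification of $\overline{-I}$ via the determinants on the orthogonal factors---but the strategy is identical.
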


\begin{proof}
This follows from \cite[Theorem 3.1]{liebeck-seitz:2012:unipotent-nilpotent-classes} together with the fact that the natural map $A_{\bG}(u) \to A_{\widetilde{\bG}}(u)$ is surjective with kernel given by the image of the centre $Z(\bG)$.
\end{proof}

\begin{pa}
Let $r,s,n \in \mathbb{N}_0$ and $d \in \mathbb{Z}$ be integers and set $e = \lfloor d/2\rfloor = \max\{a \in \mathbb{Z} \mid a \leqslant d/2\}$. Following \cite{lusztig-spaltenstein:1985:on-the-generalised-springer-correspondence} we denote by $\widetilde{\mathbb{X}}_{n,d}^{r,s}$ the set of all ordered pairs $\sbpair{A}{B}$ of finite sequences $A = (a_1,\dots,a_{m+d})$ and $B = (b_1,\dots,b_m)$ of non-negative integers such that the following hold:
%%%%
\begin{enumerate}
	\item $a_i - a_{i-1} \geqslant r+s$ for all $1 < i \leqslant m+d$,
	\item $b_i - b_{i-1} \geqslant r+s$ for all $1 < i \leqslant m$,
	\item $b_1 \geqslant s$,
	\item $\sum_{i=1}^{m+d} a_i + \sum_{j=1}^m b_j = n + r(m+e)(m+d-e-1) + s(m+e)(m+d-e)$.
\end{enumerate}
%%%%
There is a natural shift operation $\widetilde{\mathbb{X}}_{n,d}^{r,s} \to \widetilde{\mathbb{X}}_{n,d}^{r,s}$ on this set defined by
%%%%
\begin{equation*}
\bpair{A}{B} \mapsto \bpair{0,a_1+r+s,\dots,a_{m+d}+r+s}
{s,b_1+r+s,\dots,b_m+r+s}
\end{equation*}
%%%%
This induces an equivalence relation on $\widetilde{\mathbb{X}}_{n,d}^{r,s}$ and we denote by $\mathbb{X}_{n,d}^{r,s}$ the resulting set of equivalence classes. We will write the equivalence class containing $\sbpair{A}{B}$ as $\ssymb{A}{B}$. We call $n$ the rank of $\ssymb{A}{B}$.
\end{pa}

\begin{rem}
It is easily checked that the set $\mathbb{X}_{n,d}^{0,0}$ is naturally in bijection with the set of all bipartitions of $n$, i.e., pairs of partitions whose entries sum to $n$.
\end{rem}

\begin{pa}
We now recall that we have an addition of symbols $\oplus : \mathbb{X}_{n,d}^{r,s} \times \mathbb{X}_{n',d}^{r',s'} \to \mathbb{X}_{n+n',d}^{r+r',s+s'}$ defined as follows. If $\ssymb{A}{B} \in \mathbb{X}_{n,d}^{r,s}$ and $\ssymb{A'}{B'} \in \mathbb{X}_{n',d}^{r',s'}$ then after shifting we may assume that $|B| = |B'| = m$ and $|A| = |A'| = m+d$. With this we define
%%%%
\begin{equation*}
\symb{A}{B} \oplus \symb{A'}{B'} = \symb{a_1+a_1',\dots,a_{m+d}+a_{m+d}'}{b_1+b_1',\dots,b_m+b_m'}.
\end{equation*}
%%%%
If $d \in \{0,1\}$ then the set $\mathbb{X}_{0,d}^{r,s}$ consists of a single element $\Lambda_{0,d}^{r,s}$; specifically $\Lambda_{0,0}^{r,s} = \ssymb{\emptyset}{\emptyset}$ and $\Lambda_{0,1}^{r,s} = \ssymb{0}{\emptyset}$. Adding with these symbols clearly leaves the rank unchanged.
\end{pa}

\begin{pa}\label{pa:chars-weyl}
Assume $k \geqslant 1$ is an integer and let $W_k' \leqslant W_k$ denote the Weyl groups of type $\D_k$ and $\B_k$ respectively. It is well known that we have a bijection $\mathbb{X}_{k,1}^{0,0} \to \Irr(W_k)$ between the bipartitions of $k$ and the irreducible characters of $W_k$; we assume this bijection is the one defined in \cite[4.5]{lusztig:1984:characters-of-reductive-groups}. Now let us denote by $\mathbb{Y}_{k,1}^{0,0} \subseteq \mathbb{X}_{k,1}^{0,0}$ the subset of symbols $\ssymb{A}{B}$ such that
%%%%
\begin{equation*}
\sum_{i=1}^m a_i > \sum_{i=1}^m b_i.
\end{equation*}
%%%%
where $A = (a_1,\dots,a_m)$ and $B = (b_1,\dots,b_m)$. We then have an injective map $\mathbb{Y}_{k,1}^{0,0} \to \Irr(W_k')$ defined by $\ssymb{A}{B} \to \Res_{W_k'}^{W_k}(\ssymb{A}{B})$. Note this map makes sense because $\ssymb{A}{B} \in \mathbb{X}_{k,1}^{0,0}$, which we identify with $\Irr(W_k)$, and the restrictions are irreducible in this case.
\end{pa}

\begin{rem}
Our choice of ordering for the symbols in $\mathbb{Y}_{k,1}^{0,0}$ ensures that the $b$-invariant of the character of $\Irr(W_k')$ labelled by $\ssymb{A}{B} \in \mathbb{Y}_{k,1}^{0,0}$ is the same as that of the character of $\Irr(W_k)$ labelled by $\ssymb{A}{B} \in \mathbb{X}_{k,1}^{0,0}$, see \cite[5.6.2]{geck-pfeiffer:2000:characters-of-finite-coxeter-groups}.
\end{rem}

\begin{pa}
Let us denote by $\mathcal{N}_{\bG}$ the set of all pairs $(\mathcal{O},\mathscr{E})$ consisting of a unipotent conjugacy class $\mathcal{O}$ of $\bG$ and an irreducible $\bG$-equivariant local system $\mathscr{E}$ on $\mathcal{O}$, taken up to isomorphism. In \cite{lusztig:1984:intersection-cohomology-complexes}, building on work of Springer, Lusztig has defined an injective map $\Spr : \Irr(W) \to \mathcal{N}_{\bG}$ called the Springer correspondence. We may combinatorially describe this map as follows. In \cite[\S12]{lusztig:1984:intersection-cohomology-complexes} Lusztig has defined a bijection
%%%%
\begin{equation}\label{eq:unipotent-pairs}
\mathcal{N}_{\bG} \longleftrightarrow \mathbb{X}_n^{1,1} := \bigcup_{\substack{d \in \mathbb{Z}\\ d\text{ odd}}} \mathbb{X}_{n,d}^{1,1}.
\end{equation}
%%%%
Identifying $\mathcal{N}_{\bG}$ with $\mathbb{X}_n^{1,1}$ we then have the image of $\Spr$ is the set $\mathbb{X}_{n,1}^{1,1}$. Identifying $\Irr(W)$ with $\mathbb{X}_{n,1}^{0,0}$, as in \cref{pa:chars-weyl}, we have the map $\Spr$ is given by
%%%%
\begin{equation}\label{eq:spr-map}
\Spr(X) = X \oplus \Lambda_{0,1}^{1,1}
\end{equation}
%%%%
for any $X \in \mathbb{X}_{n,1}^{0,0}$.
\end{pa}

\begin{rem}
Let $\mathcal{N}_{\widetilde{\bG}}$ be defined for $\widetilde{\bG}$ as $\mathcal{N}_{\bG}$ is defined for $\bG$. The embedding $\iota$ induces a bijection between the unipotent conjugacy classes of $\bG$ and those of $\widetilde{\bG}$. Moreover, let $\mathcal{O} \subseteq \bG \subseteq \widetilde{\bG}$ be a unipotent conjugacy class with class representative $u \in \mathcal{O}$. The natural embedding $C_{\bG}(u) \hookrightarrow C_{\widetilde{\bG}}(u)$ induces a surjective homomorphism $A_{\bG}(u) \to A_{\widetilde{\bG}}(u)$ between the corresponding component groups and thus an injective map $\Irr(A_{\widetilde{\bG}}(u)) \to \Irr(A_{\bG}(u))$. This corresponds to an injective map between the isomorphism classes of $\widetilde{\bG}$-equivariant irreducible local systems on $\mathcal{O}$ and the $\bG$-equivariant irreducible local systems on $\mathcal{O}$. In particular, we have $\iota$ defines an embedding $\mathcal{N}_{\widetilde{\bG}} \hookrightarrow \mathcal{N}_{\bG}$ and the image of $\Spr$ is contained in $\mathcal{N}_{\widetilde{\bG}}$. The map $\Spr$ also defines the Springer correspondence for $\widetilde{\bG}$.
\end{rem}

\section{Cuspidal Characters in Quasi-Isolated Series}\label{sec:mult-1}
\begin{assumption}
We now assume that $\widetilde{s} \in \widetilde{\bT}_0^{\star}$ is such that $\iota^{\star}(\widetilde{s})$ is quasi-isolated in $\bG^{\star}$ and $T_{\widetilde{W}^{\star}}(\widetilde{s},F^{\star}) \neq \emptyset$.
\end{assumption}

\begin{pa}
Assume $\widetilde{\chi} \in \Irr(\widetilde{\bG}^F)$ is an irreducible character. We recall that Lusztig has defined a corresponding integer $n_{\widetilde{\chi}} \geqslant 1$ such that $n_{\widetilde{\chi}}\cdot\widetilde{\chi}(1) \in \mathbb{Z}[q]$ is an integral polynomial in $q$, c.f., \cite[4.26]{lusztig:1984:characters-of-reductive-groups}. This is usually referred to as the generic denominator of $\widetilde{\chi}$. Moreover, we have a corresponding $F$-stable unipotent conjugacy class $\mathcal{O}_{\widetilde{\chi}}^* \subseteq \widetilde{\bG}$ called the \emph{wave front set} of $\widetilde{\chi}$, see \cite{kawanaka:1985:GGGRs-and-ennola-duality,lusztig:1992:a-unipotent-support,taylor:2016:GGGRs-small-characteristics}. If $u \in \mathcal{O}_{\widetilde{\chi}}^*$ then it is known that $n_{\widetilde{\chi}}$ divides $|A_{\widetilde{\bG}}(u)|$. We aim to show that, for certain cuspidal characters, we have an equality $n_{\widetilde{\chi}} = |A_{\widetilde{\bG}}(u)|$. This numerical result will be crucial for showing that these cuspidal characters are invariant under $\sigma$. We note that a statement similar to this has been used by Geck in relation to cuspidal character sheaves, see \cite[5.3]{geck:1999:character-sheaves-and-GGGRs}.
\end{pa}

\begin{pa}
To obtain this result we need to describe the possible sets $\mathcal{E}_0(\widetilde{\bG}^F,\widetilde{s},\widetilde{\mathfrak{C}})$ which contain a cuspidal character. By the classification of quasi-isolated semisimple elements, see \cite[4.11]{bonnafe:2005:quasi-isolated}, we have $s \in \bT_0^{\star}$ is quasi-isolated in $\bG^{\star}$ if and only if $s^2 = 1$. In particular, if $V = \mathbb{K}^{2n+1}$ is the natural module for $\bG^{\star}$ then we have $C_{\bG^{\star}}^{\circ}(s)$ is of type $\B_a\D_b$ where $a = \dim \Ker(s-\ID_V)$ and $b = \dim \Ker(s+\ID_V)$. Note that a factor $\D_1$ should be considered empty. The centraliser $C_{\widetilde{\bG}^{\star}}(\widetilde{s})$ is mapped isomorphically onto the connected component $C_{\bG^{\star}}^{\circ}(s)$ via $\iota^{\star}$, thus $C_{\widetilde{\bG}^{\star}}(\widetilde{s})$ is connected of type $\B_a\D_b$. With this we have the following well known result of Lusztig.
\end{pa}

\begin{lem}[Lusztig]\label{lem:cuspidal-char}
If the (geometric) series $\mathcal{E}_0(\widetilde{\bG}^F,\widetilde{s})$ contains a cuspidal character then $a=e(e+1)$ and $b=f^2$ for some non-negative integers satisfying either the condition $e\geqslant 1$ or the condition $f\geqslant 2$ and the cuspidal character is unique. If $\widetilde{\chi} \in \mathcal{E}_0(\widetilde{\bG}^F,\widetilde{s},\widetilde{\mathfrak{C}}) \subseteq \mathcal{E}_0(\widetilde{\bG}^F,\widetilde{s})$ is a cuspidal character then the corresponding family $\Irr(W_{\widetilde{\bG}^{\star}}(\widetilde{s})\mid\widetilde{\mathfrak{C}})$ contains the special character
%%%%
\begin{equation*}
\begin{bmatrix}
0 & 1 & \cdots & e-1 & e\\
1 & 2 & \cdots & e
\end{bmatrix}
\boxtimes
\begin{bmatrix}
1 & 2 & \cdots & f-1 & f\\
1 & 2 & \cdots & f-1
\end{bmatrix} \in \mathbb{X}_{a,1}^{0,0} \times \mathbb{Y}_{b,1}^{0,0}.
\end{equation*}
%%%%
Moreover, we have $n_{\widetilde{\chi}} = 2^{e+f-\Delta(f)}$ where $\Delta(f) = 0$ if $f=0$ and $\Delta(f) = 1$ if $f \neq 0$.
\end{lem}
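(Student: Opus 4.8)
The plan is to reduce, via Lusztig's Jordan decomposition of characters, to the classification of cuspidal unipotent characters of the centralizer $C := C_{\widetilde{\bG}^{\star}}(\widetilde{s})$ together with Lusztig's combinatorial description of families; everything needed is contained in \cite{lusztig:1984:characters-of-reductive-groups}. Since $Z(\widetilde{\bG})$ is connected the group $C$ is connected, and by the discussion preceding the statement it is a direct product $C_1 \times C_2$ of a group of type $\B$ and a group of type $\D$, the latter possibly twisted by $F^{\star}$. Because $\iota^{\star}(\widetilde{s})$ is quasi-isolated, the cuspidal characters of $\widetilde{\bG}^F$ lying in $\mathcal{E}_0(\widetilde{\bG}^F,\widetilde{s})$ are precisely those whose Jordan correspondent is a cuspidal unipotent character of $C^{F^{\star}}$, and such a character of $C^{F^{\star}} = C_1^{F^{\star}} \times C_2^{F^{\star}}$ is necessarily an external product $\psi_1 \boxtimes \psi_2$ of cuspidal unipotent characters of the two factors.

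Next I would quote Lusztig's classification of the cuspidal unipotent characters of classical groups: a group of type $\B_m$ admits one exactly when $m = e(e+1)$ for some $e \geq 0$, and a group of type $\D_m$, resp.\ ${}^2\D_m$, exactly when $m = f^2$ with $f$ even, resp.\ odd, and in each case the cuspidal unipotent character is unique. Reading this off through the parameters $a$ and $b$ attached to $\widetilde{s}$, and discarding the degenerate $\D_1$-factor, gives the stated conditions $a = e(e+1)$ and $b = f^2$, the uniqueness of $\widetilde{\chi}$, and the side condition $e \geq 1$ or $f \geq 2$, which rules out the small cases ($C$ too close to a torus) in which this series contains no cuspidal character at all.

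To pin down the family, note that $W_{\widetilde{\bG}^{\star}}(\widetilde{s})$ is the direct product of the Weyl groups of $C_1$ and $C_2$, so a family of it is the external product of a family of each factor, and it is determined by its unique special character. By Lusztig's description of the family of a cuspidal unipotent character, \cite[\S4, \S11]{lusztig:1984:characters-of-reductive-groups}, written in the symbol combinatorics of \cref{sec:springer}, the family of $\psi_1$ has special character $\ssymb{0 & 1 & \cdots & e-1 & e}{1 & 2 & \cdots & e} \in \mathbb{X}_{a,1}^{0,0}$ and that of $\psi_2$ has special character the image under the map of \cref{pa:chars-weyl} of $\ssymb{1 & 2 & \cdots & f-1 & f}{1 & 2 & \cdots & f-1} \in \mathbb{Y}_{b,1}^{0,0}$; a direct check with the rank convention of \cref{sec:springer} confirms these symbols have the correct ranks $e(e+1)$ and $f^2$. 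Their external product is the special character displayed in the statement.

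Finally, since Jordan decomposition is compatible with generic degrees one has $n_{\widetilde{\chi}} = n_{\psi} = n_{\psi_1} n_{\psi_2}$, the generic denominator being multiplicative for external products. By Lusztig's explicit formula for generic degrees of classical groups, the cuspidal unipotent character of a group of type $\B_{e(e+1)}$ has generic denominator $2^e$, whereas that of a group of type $\D_{f^2}$ or ${}^2\D_{f^2}$ has generic denominator $2^{f-1}$ when $f \geq 1$ and $1$ when $f = 0$; multiplying gives $n_{\widetilde{\chi}} = 2^{e+f-\Delta(f)}$. I expect the real work here to be bookkeeping rather than conceptual: reconciling Lusztig's several conventions for symbols, families and generic degrees — including the twisted versus untwisted type $\D$, the restriction map $\mathbb{Y}_{b,1}^{0,0} \to \Irr(W_b')$ of \cref{pa:chars-weyl}, and the exact list of degenerate cases excluded by $e \geq 1$ or $f \geq 2$ — with the normalisations fixed in \cref{sec:springer}, and then extracting the precise powers of $2$ from the generic-degree formula.
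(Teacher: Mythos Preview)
Your proposal is correct and follows essentially the same route as the paper: reduce via Jordan decomposition to cuspidal unipotent characters of the centraliser $C_{\widetilde{\bG}^{\star}}(\widetilde{s})^{w_1^{\star}F^{\star}} \cong C_1^{F^{\star}} \times C_2^{F^{\star}}$ of type $\B_a \times \D_b$, then invoke Lusztig's classification of cuspidal unipotent characters of classical groups together with his description of the relevant families and generic degrees. The paper's proof is terser (it simply cites \cite[7.8.2, 8.2, 3.6.1]{lusztig:1977:irreducible-representations-of-finite-classical-groups} and \cite[8.1]{lusztig:1984:characters-of-reductive-groups}), but your expanded account, including the distinction between $\D$ and ${}^2\D$ and the multiplicativity of $n_{\widetilde{\chi}}$, is exactly the unpacking of those references; the closing remark about bookkeeping with symbol conventions is apt, since that is precisely what the cited passages supply.
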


\begin{proof}
Fix an element $w_1^{\star} \in T_{\widetilde{W}^{\star}}(\widetilde{s},F^{\star})$ and a Jordan decomposition $\Psi_{\widetilde{s}} : \mathcal{E}_0(\widetilde{\bG}^F,\widetilde{s}) \to \mathcal{E}_0(C_{\widetilde{\bG}^{\star}}(s)^{w_1^{\star}F^{\star}},1)$. By \cite[7.8.2]{lusztig:1977:irreducible-representations-of-finite-classical-groups} we have $\widetilde{\chi} \in \mathcal{E}_0(\widetilde{\bG}^F,\widetilde{s})$ is cuspidal if and only if $\Psi_{\widetilde{s}}(\widetilde{\chi})$ is cuspidal. Now the unipotent character $\Psi_{\widetilde{s}}(\widetilde{\chi})$ is a tensor product $\widetilde{\psi}'\boxtimes\widetilde{\psi}''$ of unipotent characters corresponding to the two factors of $C_{\widetilde{\bG}^{\star}}(s)^{w_1^{\star}F^{\star}}$ and we have $\Psi_{\widetilde{s}}(\widetilde{\chi})$ is cuspidal if and only if $\widetilde{\psi}'$ and $\widetilde{\psi}''$ are cuspidal. The statement now follows from \cite[8.2]{lusztig:1977:irreducible-representations-of-finite-classical-groups}, see also \cite[3.6.1]{lusztig:1977:irreducible-representations-of-finite-classical-groups} and \cite[8.1]{lusztig:1984:characters-of-reductive-groups}.
\end{proof}

\begin{prop}\label{prop:numerical-trick}
Recall our assumption that $\iota^{\star}(\widetilde{s}) = s$ is quasi-isolated in $\bG^{\star}$. If $\widetilde{\chi} \in \mathcal{E}_0(\widetilde{\bG}^F,\widetilde{s})$ is a cuspidal irreducible character and $u \in \mathcal{O}_{\widetilde{\chi}}^*$ is a representative of the wave front set then we have $n_{\widetilde{\chi}} = |A_{\widetilde{\bG}}(u)|$.
\end{prop}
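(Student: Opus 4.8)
The strategy is to compute both sides explicitly and compare. By \cref{lem:cuspidal-char} we already have the value of the generic denominator, $n_{\widetilde\chi} = 2^{e+f-\Delta(f)}$, where $a = e(e+1)$ and $b = f^2$ record the type $\B_a\D_b$ of $C_{\widetilde\bG^\star}(\widetilde s)$. On the other side, by \cref{lem:comp-grp} the group $A_{\widetilde\bG}(u)$ is elementary abelian of order $2^{n(u)-\delta(u)}$, where $n(u)$ and $\delta(u)$ are determined by the partition $\lambda = (1^{r_1},2^{r_2},\dots) \in \mathcal{P}_1(2n)$ labelling the $\widetilde\bG$-class $\mathcal{O}^*_{\widetilde\chi}$. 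Since it is known that $n_{\widetilde\chi}$ divides $|A_{\widetilde\bG}(u)|$, the proposition amounts to the reverse inequality $n(u) - \delta(u) \leqslant e + f - \Delta(f)$, and I expect to obtain equality outright. Thus the whole matter reduces to identifying the partition $\lambda$ of the wave front set.

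To determine $\lambda$ I would use that $\widetilde\chi$ is cuspidal. As in the proof of \cref{lem:cuspidal-char}, a Jordan decomposition $\Psi_{\widetilde s}$ identifies $\widetilde\chi$ with a tensor product $\widetilde\psi'\boxtimes\widetilde\psi''$ of cuspidal unipotent characters on the two classical factors, of types $\B_a$ and $\D_b$, of $C_{\widetilde\bG^\star}(\widetilde s)^{w_1^\star F^\star}$; the corresponding family and its special character are precisely the symbol displayed in \cref{lem:cuspidal-char}. The wave front set $\mathcal{O}^*_{\widetilde\chi}$ of $\widetilde\chi$ in $\widetilde\bG$ is then obtained from this special character by the wave front set computation of \cite[\S14]{taylor:2016:GGGRs-small-characteristics}: one feeds the special character through the symbol arithmetic ($\oplus$, the shift operation, and the base symbols $\Lambda^{1,1}_{0,d}$) encoding the generalised Springer correspondence recalled in \cref{sec:springer}, together with the duality sending a unipotent support to its associated wave front set. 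The result exhibits $\mathcal{O}^*_{\widetilde\chi}$ as an explicit element of $\mathbb{X}_n^{1,1}$, hence as an explicit partition $\lambda \in \mathcal{P}_1(2n)$.

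With $\lambda$ in hand the proof is finished by counting: one verifies that the distinct even parts of $\lambda$, together with their multiplicities, satisfy $n(u) - \delta(u) = e + f - \Delta(f)$. Roughly, when $f \neq 0$ there are $e + f$ distinct even parts, exactly one of them with odd multiplicity, so $\delta(u) = 1 = \Delta(f)$; when $f = 0$ there are $e$ distinct even parts, all with even multiplicity, so $\delta(u) = 0 = \Delta(f)$. Substituting into \cref{lem:comp-grp} yields $|A_{\widetilde\bG}(u)| = 2^{n(u)-\delta(u)} = 2^{e+f-\Delta(f)} = n_{\widetilde\chi}$.

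The real obstacle is the middle step --- pinning down $\lambda$ exactly. This requires careful tracking through the Jordan decomposition and the identification of the cuspidal unipotent characters and their supports on the $\B_a$ and $\D_b$ factors; through the symbol arithmetic governing both the ordinary and the generalised Springer correspondences for $\widetilde\bG$; and through the correct normalisation of the unipotent-support/wave-front-set duality, all while respecting the constraint $\lambda \in \mathcal{P}_1(2n)$ (odd parts occur with even multiplicity). By comparison, the final tally of even parts is routine once $\lambda$ is known.
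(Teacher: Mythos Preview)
Your strategy is exactly the paper's: compute $n_{\widetilde\chi}$ from \cref{lem:cuspidal-char}, compute $|A_{\widetilde\bG}(u)|$ from \cref{lem:comp-grp}, and match them by identifying the partition $\lambda$ of the wave front set. The final count you sketch is also correct, including the $f=0$ edge case.

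Where you are vague, however, is precisely where the paper is concrete, and your description of the tools is slightly off. You invoke the \emph{generalised} Springer correspondence and a unipotent-support/wave-front-set duality; the paper uses neither. Instead it uses the \emph{ordinary} Springer correspondence together with Lusztig's $j$-induction: since the cuspidal family is sign-invariant, the special character $E$ satisfies $E\otimes\sgn=E$, so $j_{W_{\widetilde\bG^\star}(\widetilde s)}^{W^\star}(E\otimes\sgn)=j_{W_{\widetilde\bG^\star}(\widetilde s)}^{W^\star}(E)$, and by \cite[\S10]{lusztig:1992:a-unipotent-support} (see also \cite[\S12]{taylor:2016:GGGRs-small-characteristics}) the Springer image $\Spr\bigl(j_{W_{\widetilde\bG^\star}(\widetilde s)}^{W^\star}(E)\bigr)$ already gives the wave front set directly. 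The symbol arithmetic then reduces to a single formula \cite[5.3(b)]{lusztig:2009:unipotent-classes-and-special-Weyl}: $j$-induction is $\ssymb{A}{B}\oplus\ssymb{C}{D}$, and $\Spr$ is $-\oplus\Lambda_{0,1}^{1,1}$. From there the paper splits into the two cases $e<f$ and $f\leqslant e$ (set $k=|e-f|$), writes down the resulting symbol in $\mathbb{X}_{n,1}^{1,1}$ explicitly, reads off $\lambda=2\mu$ with $\mu$ a staircase-with-tail, and counts. So your ``real obstacle'' dissolves once you name the right mechanism: $j$-induction of the special character, not the generalised correspondence or an external duality.
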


\begin{proof}
We will denote by $j_{W_{\widetilde{\bG}^{\star}}(\widetilde{s})}^{W^{\star}}$ the $j$-induction of characters from $W_{\widetilde{\bG}^{\star}}(\widetilde{s})$ to $W^{\star}$ with respect to the natural module of $W^{\star}$. Let $E = \ssymb{A}{B} \boxtimes \ssymb{C}{D} \in \mathbb{X}_{a,1}^{0,0} \times \mathbb{Y}_{b,1}^{0,0}$ be the special character of $W_{\widetilde{\bG}^{\star}}(\widetilde{s})$ described in \cref{lem:cuspidal-char}. Note that $E$ is invariant under tensoring with the sign character because it is contained in a cuspidal family, see \cite[8.1]{lusztig:1984:characters-of-reductive-groups}. In particular, $j_{W_{\widetilde{\bG}^{\star}}(\widetilde{s})}^{W^{\star}}(E\otimes\sgn) = j_{W_{\widetilde{\bG}^{\star}}(\widetilde{s})}^{W^{\star}}(E)$ and according to \cite[5.3(b)]{lusztig:2009:unipotent-classes-and-special-Weyl} we have
%%%%
\begin{equation}\label{eq:springer-map-C}
j_{W_{\widetilde{\bG}^{\star}}(\widetilde{s})}^{W^{\star}}(E) = \symb{A}{B} \oplus \symb{C}{D}.
\end{equation}
%%%%
Combining this with the description of the map in \cref{eq:spr-map} allows us to compute $\Spr(j_{W_{\widetilde{\bG}^{\star}}(\widetilde{s})}^{W^{\star}}(E)) \in \mathbb{X}_{n,1}^{1,1}$.

The symbol $\Spr(j_{W_{\widetilde{\bG}^{\star}}(\widetilde{s})}^{W^{\star}}(E))$ corresponds to a pair $(\mathcal{O},\mathscr{E}) \in \mathcal{N}_{\bG}$ under the map mentioned in \cref{eq:unipotent-pairs} and $\mathcal{O}$ is precisely the wave front set $\mathcal{O}_{\widetilde{\chi}}^*$, see \cite[\S10]{lusztig:1992:a-unipotent-support} and \cite[\S12]{taylor:2016:GGGRs-small-characteristics}. Assume $c_0 \leqslant c_1 \leqslant \cdots \leqslant c_s$ are the entries of the symbol $\Spr(j_{W_{\widetilde{\bG}^{\star}}(\widetilde{s})}^{W^{\star}}(E))$. If all these entries are distinct then the partition parameterising $\mathcal{O}_{\widetilde{\chi}}^*$, c.f., \cref{pa:uni-classes}, is obtained from the sequence $(2c_0,2c_1-2,\dots,2c_s-2s)$ by removing any zero entries, see \cite[\S2.B]{geck-malle:2000:existence-of-a-unipotent-support}. The general case is more complicated but we will not need it here, see \cite[\S2.B]{geck-malle:2000:existence-of-a-unipotent-support} for details.

Using the above we may now prove the statement. There are two cases to consider. Firstly, assume $0 \leqslant e < f$ and let $k = f - e > 0$ then by \cref{eq:spr-map,eq:springer-map-C} we have $\Spr(j_{W_{\widetilde{\bG}^{\star}}(\widetilde{s})}^{W^{\star}}(E))$ is the symbol
%%%%%
%\begin{align*}
%\symb{A}{B} &=
%\kbordermatrix{
%& 0 & 1 & \cdots & k-1 & k & k+1 & \cdots & k+e-1 & k+e\\
%& 0 & 0 & \cdots & 0   & 0 & 1   & \cdots & e-1   & e\\
%& 0 & 0 & \cdots & 0   & 1 & 2   & \cdots & e
%}\\
%\symb{C}{D} &=
%\kbordermatrix{
%& 0 & 1 & \cdots & k-1 & k & k+1 & \cdots & k+e-1 & k+e\\
%& 0 & 1 & \cdots & k-1 & k & k+1 & \cdots & k+e-1 & k+e\\
%& 0 & 1 & \cdots & k-1 & k & k+1 & \cdots & k+e-1
%}\\
%\Lambda_{0,1}^{1,1} &=
%\kbordermatrix{
%& 0 & 1 & \cdots & k-1  & k    & k+1  & \cdots & k+e-1   & k+e\\
%& 0 & 2 & \cdots & 2k-2 & 2k   & 2k+2 & \cdots & 2k+2e-2 & 2k+2e\\
%& 1 & 3 & \cdots & 2k-1 & 2k+1 & 2k+3 & \cdots & 2k+2e-1
%}\\
%\Spr(j_{W_{\widetilde{\bG}^{\star}}(\widetilde{s})}^{W^{\star}}(E)) &=
%\kbordermatrix{
%& 0 & 1 & \cdots & k-1  & k    & k+1  & \cdots & k+e-1   & k+e\\
%& 0 & 3 & \cdots & 3k-3 & 3k   & 3k+4 & \cdots & 3k+4e-4 & 3k+4e\\
%& 1 & 4 & \cdots & 3k-2 & 3k+2 & 3k+6 & \cdots & 3k+4e-2
%}
%\end{align*}
%%%%%
%%%%
\begin{equation*}
\kbordermatrix{
& 0 & 1 & \cdots & k-1  & k    & k+1  & \cdots & k+e-1   & k+e\\
& 0 & 3 & \cdots & 3k-3 & 3k   & 3k+4 & \cdots & 3k+4e-4 & 3k+4e\\
& 1 & 4 & \cdots & 3k-2 & 3k+2 & 3k+6 & \cdots & 3k+4e-2
}
\end{equation*}
%%%%
The unipotent class corresponding to this symbol is thus parameterised by the partition $\lambda = 2\mu$ where
%%%%
\begin{equation*}
\mu = (1,1,2,2,\dots,k-1,k-1,k,k+1,\dots,k+2e)
\end{equation*}
%%%%
As $\lambda$ contains $(k-1) + (2e+1) = e+f$ distinct even numbers and one such number occurs an odd number of times we have $n_{\widetilde{\chi}} = 2^{e+f-1} = |A_{\widetilde{\bG}}(u)|$, c.f., \cref{lem:comp-grp,lem:cuspidal-char}.

Assume $0 \leqslant f \leqslant e$ and let $k = e - f \geqslant 0$ then by \cref{eq:spr-map,eq:springer-map-C} we have $\Spr(j_{W^{\star}(s)}^{W^{\star}}(E))$ is the symbol
%%%%%
%\begin{align*}
%\symb{A}{B} &=
%\kbordermatrix{
%& 0 & 1 & \cdots & k-1 & k   & k+1 & \cdots & k+f-1 & k+f\\
%& 0 & 1 & \cdots & k-1 & k   & k+1 & \cdots & k+f-1 & k+f\\
%& 1 & 2 & \cdots & k   & k+1 & k+2 & \cdots & k+f
%}\\
%\symb{C}{D} &=
%\kbordermatrix{
%& 0 & 1 & \cdots & k-1 & k & k+1 & \cdots & k+f-1 & k+f\\
%& 0 & 0 & \cdots & 0   & 0 & 1   & \cdots & f-1   & f\\
%& 0 & 0 & \cdots & 0   & 0 & 1   & \cdots & f-1
%}\\
%\Lambda_{0,1}^{1,1} &=
%\kbordermatrix{
%& 0 & 1 & \cdots & k-1  & k    & k+1  & \cdots & k+f-1   & k+f\\
%& 0 & 2 & \cdots & 2k-2 & 2k   & 2k+2 & \cdots & 2k+2f-2 & 2k+2f\\
%& 1 & 3 & \cdots & 2k-1 & 2k+1 & 2k+3 & \cdots & 2k+2f-1
%}\\
%\Spr(j_{W_{\widetilde{\bG}^{\star}}(\widetilde{s})}^{W^{\star}}(E)) &=
%\kbordermatrix{
%& 0 & 1 & \cdots & k-1  & k    & k+1  & \cdots & k+f-1   & k+f\\
%& 0 & 3 & \cdots & 3k-3 & 3k   & 3k+4 & \cdots & 3k+4f-4 & 3k+4f\\
%& 2 & 5 & \cdots & 3k-1 & 3k+2 & 3k+6 & \cdots & 3k+4f-2
%}
%\end{align*}
%%%%%
%%%%
\begin{equation*}
\kbordermatrix{
& 0 & 1 & \cdots & k-1  & k    & k+1  & \cdots & k+f-1   & k+f\\
& 0 & 3 & \cdots & 3k-3 & 3k   & 3k+4 & \cdots & 3k+4f-4 & 3k+4f\\
& 2 & 5 & \cdots & 3k-2 & 3k+2 & 3k+6 & \cdots & 3k+4f-2
}
\end{equation*}
%%%%
The unipotent class corresponding to this symbol is thus parameterised by the partition $\lambda = 2\mu$ where
%%%%
\begin{equation*}
\mu = (1,1,2,2,...,k,k,k+1,\dots,k+2f)
\end{equation*}
%%%%
As $\lambda$ contains $k + 2f = e+f$ distinct even numbers and one such number occurs an odd number of times if and only if $f \neq 0$ we have $n_{\widetilde{\chi}} = 2^{e+f-\Delta(f)} = |A_{\widetilde{\bG}}(u)|$, c.f., \cref{lem:comp-grp,lem:cuspidal-char}.
\end{proof}

\begin{thm}\label{thm:cuspidal-fixed}
Assume $(s,a^{\star}) \in \mathcal{T}_{\bG^{\star}}(\bT_0^{\star},F^{\star})$ is such that $s$ is quasi-isolated in $\bG^{\star}$ then ${}^{\sigma}\mathcal{E}_0(\bG^F,s,a^{\star}) = \mathcal{E}_0(\bG^F,s,a^{\star})$. Moreover, if $\chi \in \mathcal{E}_0(\bG^F,s,a^{\star})$ is a cuspidal character then ${}^{\sigma}\chi = \chi$.
\end{thm}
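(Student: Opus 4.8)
The plan is to dispatch the statement about Lusztig series quickly, and then to prove ${}^{\sigma}\chi=\chi$ by producing a generalised Gelfand--Graev representation of $\bG^F$ in which the cuspidal character $\chi$ occurs with multiplicity exactly one. For the first assertion, \cref{prop:Lusztig-series-iso-image} together with \cref{lem:DL-geo-series-compare} gives ${}^{\sigma}\mathcal{E}_0(\bG^F,s,a^{\star})=\mathcal{E}_0(\bG^F,\sigma^{\star-1}(s),\sigma^{\star-1}(a^{\star}))$, exactly as in \cref{pa:stab-Lusztig-series}. Since $s$ is quasi-isolated in $\bG^{\star}$ we have $s^{2}=1$ by \cite[4.11]{bonnafe:2005:quasi-isolated}; the dual isogeny $\sigma^{\star}=F_{r}^{\star}$ acts on $\bT_{0}^{\star}$ by $t\mapsto t^{r}$ with $r$ an odd prime power, and acts trivially on $W^{\star}$ because $X(\sigma^{\star})$ is a scalar. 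Hence $\sigma^{\star-1}(s)=s$ and $\sigma^{\star-1}(a^{\star})=a^{\star}$, which proves the first assertion.

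Now fix a cuspidal $\chi\in\mathcal{E}_0(\bG^F,s,a^{\star})$. By \cref{lem:surjective-on-pairs} choose $(\widetilde{s},a^{\star})\in\mathcal{T}_{\widetilde{\bG}^{\star}}(\widetilde{\bT}_0^{\star},F^{\star})$ with $\iota^{\star}(\widetilde{s},a^{\star})=(s,a^{\star})$; as $\iota^{\star}(\widetilde{s})=s$ is quasi-isolated, \cref{lem:cuspidal-char} and \cref{prop:numerical-trick} apply to $\widetilde{s}$. By part~(b) of \cref{lem:restriction} there is $\widetilde{\chi}\in\mathcal{E}_0(\widetilde{\bG}^F,\widetilde{s})$ with $\chi$ a constituent of $\Res_{\bG^F}^{\widetilde{\bG}^F}(\widetilde{\chi})$; since $\chi$ is cuspidal so is $\widetilde{\chi}$, and by \cref{lem:cuspidal-char} it is the \emph{unique} cuspidal character in $\mathcal{E}_0(\widetilde{\bG}^F,\widetilde{s})$. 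Let $\mathcal{O}_{\widetilde{\chi}}^{*}$ be its wave front set; by \cref{prop:numerical-trick} we have $n_{\widetilde{\chi}}=|A_{\widetilde{\bG}}(u)|$ for $u\in\mathcal{O}_{\widetilde{\chi}}^{*}$, and $A_{\widetilde{\bG}}(u)$ is an abelian $2$-group by \cref{lem:comp-grp}. Feeding this numerical identity into the description of the multiplicities $\langle\widetilde{\chi},\Gamma_{v}^{\widetilde{\bG}^F}\rangle$, with $v$ running over the $\widetilde{\bG}^F$-classes in $\mathcal{O}_{\widetilde{\chi}}^{*F}$, supplied by \cite[15.4]{taylor:2016:GGGRs-small-characteristics}, then yields a rational $u\in\mathcal{O}_{\widetilde{\chi}}^{*F}$ with $\langle\widetilde{\chi},\Gamma_{u}^{\widetilde{\bG}^F}\rangle=1$. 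Because the unipotent subgroup and the linear character defining $\Gamma_{u}^{\widetilde{\bG}^F}$ already lie in $\bG^F$ and the normalising power of $q$ is the same, we have $\Gamma_{u}^{\widetilde{\bG}^F}=\Ind_{\bG^F}^{\widetilde{\bG}^F}\Gamma_{u}^{\bG^F}$, and therefore $\langle\Res_{\bG^F}^{\widetilde{\bG}^F}(\widetilde{\chi}),\Gamma_{u}^{\bG^F}\rangle=1$. As $\Res_{\bG^F}^{\widetilde{\bG}^F}(\widetilde{\chi})$ is multiplicity free (Clifford theory for regular embeddings, see \cite{bonnafe:2006:sln}), exactly one of its constituents meets $\Gamma_{u}^{\bG^F}$, and with multiplicity one; replacing $u$ by a $\widetilde{\bG}^F$-conjugate — which permutes the constituents of $\Res_{\bG^F}^{\widetilde{\bG}^F}(\widetilde{\chi})$ while preserving $\langle\widetilde{\chi},\Gamma_{u}^{\widetilde{\bG}^F}\rangle$ — we may assume this constituent is $\chi$ itself.

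To finish, note that ${}^{\sigma}\chi$ is again cuspidal and, by the first assertion, lies in $\mathcal{E}_0(\bG^F,s,a^{\star})$; applying part~(b) of \cref{lem:restriction} to ${}^{\sigma}\chi$ gives a cuspidal $\widetilde{\psi}\in\mathcal{E}_0(\widetilde{\bG}^F,\widetilde{s})$ with ${}^{\sigma}\chi$ a constituent of $\Res_{\bG^F}^{\widetilde{\bG}^F}(\widetilde{\psi})$, and the uniqueness in \cref{lem:cuspidal-char} forces $\widetilde{\psi}=\widetilde{\chi}$; thus ${}^{\sigma}\chi$ is a constituent of $\Res_{\bG^F}^{\widetilde{\bG}^F}(\widetilde{\chi})$. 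On the other hand \cref{prop:invariant-GGGRs} and \cref{lem:conj-unipotent-elms} give ${}^{\sigma}\Gamma_{u}^{\bG^F}=\Gamma_{\sigma(u)}^{\bG^F}=\Gamma_{u}^{\bG^F}$, so $\langle{}^{\sigma}\chi,\Gamma_{u}^{\bG^F}\rangle=\langle\chi,\Gamma_{u}^{\bG^F}\rangle=1\neq0$. Since $\chi$ is the only constituent of $\Res_{\bG^F}^{\widetilde{\bG}^F}(\widetilde{\chi})$ meeting $\Gamma_{u}^{\bG^F}$, we conclude ${}^{\sigma}\chi=\chi$.

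The step I expect to be the main obstacle is the passage from the numerical identity $n_{\widetilde{\chi}}=|A_{\widetilde{\bG}}(u)|$ of \cref{prop:numerical-trick} to a genuine multiplicity-one statement $\langle\widetilde{\chi},\Gamma_{u}^{\widetilde{\bG}^F}\rangle=1$ for some rational $u$: this requires the explicit generalised-Springer-correspondence formula for the GGGR multiplicities in \cite[15.4]{taylor:2016:GGGRs-small-characteristics} and some care in distributing those multiplicities over the $\widetilde{\bG}^F$-classes inside $\mathcal{O}_{\widetilde{\chi}}^{*F}$, using that $A_{\widetilde{\bG}}(u)$ is an abelian $2$-group with trivial Frobenius action (cf.\ \cref{lem:comp-grp} and the proof of \cref{lem:conj-unipotent-elms}). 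Everything else is either a direct citation of the results compiled above or routine Clifford theory for the regular embedding $\iota$.
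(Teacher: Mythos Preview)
Your proof is correct and follows essentially the same strategy as the paper's: both dispatch the first assertion via $s^{2}=1$ and the trivial $\sigma^{\star}$-action on $W^{\star}$, lift to $\widetilde{\bG}^{F}$, combine \cref{prop:numerical-trick} with \cite[15.4]{taylor:2016:GGGRs-small-characteristics} to find a rational $u\in\mathcal{O}_{\widetilde{\chi}}^{*F}$ with $\langle\widetilde{\chi},\Gamma_{u}^{\widetilde{\bG}^{F}}\rangle=1$, descend by Frobenius reciprocity, and finish using the $\sigma$-invariance of $\Gamma_{u}^{\bG^{F}}$ from \cref{prop:invariant-GGGRs} and \cref{lem:conj-unipotent-elms}. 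The paper makes the multiplicity-one step explicit by summing the $\widetilde{\Gamma}_{u_{i}}$ over all $\widetilde{\bG}^{F}$-classes in $\mathcal{O}_{\widetilde{\chi}}^{*F}$ (citing \cite[2.2, 2.4]{taylor:2013:on-unipotent-supports} for the trivial $F$-action on $A_{\widetilde{\bG}}(u)$) to get total multiplicity $|A_{\widetilde{\bG}}(u)|/n_{\widetilde{\chi}}=1$; your sketch of this step is correct but less detailed. The only real divergence is the endgame: the paper first shows the distinguished constituent $\chi_{0}$ of $\Res_{\bG^{F}}^{\widetilde{\bG}^{F}}(\widetilde{\chi})$ is $\sigma$-fixed and then, when the restriction splits as $\chi_{0}+\chi_{1}$, rules out ${}^{\sigma}\chi_{1}=\chi_{0}$ by applying $\sigma$ twice; you instead replace $u$ by a $\widetilde{\bG}^{F}$-conjugate (using that $\widetilde{\bG}^{F}$ acts transitively on the constituents and ${}^{g}\Gamma_{u}^{\bG^{F}}=\Gamma_{gug^{-1}}^{\bG^{F}}$) so that $\chi$ itself is the distinguished constituent, which is slightly more direct and avoids the case split.
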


\begin{proof}
Firstly, as $s^2 = 1$ we clearly have $\sigma^{\star}(s) = s$ and as $\sigma^{\star}$ acts trivially on $W^{\star}$ we see that $\sigma^{\star}$ fixes $(s,a^{\star}) \in \mathcal{T}_{\bG^{\star}}(\bT_0^{\star},F^{\star})$; this implies ${}^{\sigma}\mathcal{E}_0(\bG^F,s,a^{\star}) = \mathcal{E}_0(\bG^F,s,a^{\star})$ by \cref{pa:stab-Lusztig-series}. Now let $(\widetilde{s},a^{\star}) \in \mathcal{T}_{\widetilde{\bG}^{\star}}(\widetilde{\bT}_0^{\star},F^{\star})$ be such that $\iota^{\star}(\widetilde{s},a^{\star}) = (s,a^{\star})$ and let $\widetilde{\chi} \in \mathcal{E}_0(\widetilde{\bG}^F,\widetilde{s}) = \mathcal{E}_0(\widetilde{\bG}^F,\widetilde{s},a^{\star})$ be a character such that $\chi$ occurs in the restriction of $\widetilde{\chi}$, c.f., \cref{sec:regular-embeddings}. By \cite[12.1]{bonnafe:2006:sln} we have $\chi$ is cuspidal if and only if $\widetilde{\chi}$ is cuspidal. As $\widetilde{\chi}$ is the unique cuspidal character contained in $\mathcal{E}_0(\widetilde{\bG}^F,\widetilde{s})$, c.f., \cref{lem:cuspidal-char}, we see that the cuspidal characters contained in $\mathcal{E}_0(\bG^F,s,a^{\star})$ are precisely the irreducible constituents of $\Res_{\bG^F}^{\widetilde{\bG}^F}(\widetilde{\chi})$. In particular, they are all conjugate under $\widetilde{\bG}^F$.

Let $\mathcal{O} = \mathcal{O}_{\widetilde{\chi}}^*$ denote the wave front set of $\widetilde{\chi}$ and let $u_1,\dots,u_r$ be representatives for the $\widetilde{\bG}^F$-conjugacy classes contained in $\mathcal{O}^F$. We have a character of $\widetilde{\bG}^F$
%%%%
\begin{equation*}
\widetilde{\Gamma}_{\mathcal{O}} = \widetilde{\Gamma}_{u_1} + \cdots + \widetilde{\Gamma}_{u_r}
\end{equation*}
%%%%
obtained by summing the corresponding characters of the GGGRs $\widetilde{\Gamma}_{u_i} := \Gamma_{u_i}^{\widetilde{\bG}^F}$. By \cite[2.2, 2.4]{taylor:2013:on-unipotent-supports} we have $F$ acts trivially on the component group $A_{\widetilde{\bG}}(u_i)$ for all $1 \leqslant i \leqslant r$. In particular, by \cite[15.4]{taylor:2016:GGGRs-small-characteristics} we have
%%%%
\begin{equation*}
\langle \widetilde{\Gamma}_{\mathcal{O}}, \widetilde{\chi}\rangle_{\widetilde{\bG}^F} = \frac{|A_{\widetilde{\bG}}(u_i)|}{n_{\widetilde{\chi}}} = 1
\end{equation*}
%%%%
where the second equality follows from \cref{prop:numerical-trick}. As the $\widetilde{\Gamma}_{u_i}$ are characters there must exist a unique GGGR $\widetilde{\Gamma}_u = \widetilde{\Gamma}_{u_i}$, with $1 \leqslant i \leqslant r$, such that $\langle \widetilde{\Gamma}_u, \widetilde{\chi}\rangle_{\widetilde{\bG}^F} = 1$.

The unipotent element $u$ is contained in $\bG^F$ and we have a corresponding GGGR $\Gamma_u := \Gamma_u^{\bG^F}$ of $\bG^F$. This GGGR has the property that $\widetilde{\Gamma}_u = \Ind_{\bG^F}^{\widetilde{\bG}^F}(\Gamma_u)$ so consequently
%%%%
\begin{equation*}
1 = \langle \widetilde{\Gamma}_u, \widetilde{\chi}\rangle_{\widetilde{\bG}^F} = \langle \Ind_{\bG^F}^{\widetilde{\bG}^F}(\Gamma_u), \widetilde{\chi}\rangle_{\widetilde{\bG}^F} = \langle \Gamma_u, \Res_{\bG^F}^{\widetilde{\bG}^F}(\widetilde{\chi})\rangle_{\bG^F}.
\end{equation*}
%%%%
In particular, the restriction $\Res_{\bG^F}^{\widetilde{\bG}^F}(\widetilde{\chi})$ has a unique irreducible constituent $\chi_0$ such that $\langle \Gamma_u,\chi_0\rangle_{\bG^F} = 1$. It's easily seen that ${}^{\sigma}\chi_0 \in {}^{\sigma}\mathcal{E}_0(\bG^F,s,a^{\star}) = \mathcal{E}_0(\bG^F,s,a^{\star})$ is also cuspidal so is a constituent of $\Res_{\bG^F}^{\widetilde{\bG}^F}(\widetilde{\chi})$. Moreover, we have
%%%%
\begin{equation*}
1 = \langle \Gamma_u, \chi_0\rangle_{\bG^F} = \langle {}^{\sigma}\Gamma_u, {}^{\sigma}\chi_0\rangle_{\bG^F} = \langle \Gamma_u, {}^{\sigma}\chi_0\rangle_{\bG^F}
\end{equation*}
%%%%
by \cref{prop:invariant-GGGRs,lem:conj-unipotent-elms}. As $\chi_0$ is uniquely determined by this property we must have ${}^{\sigma}\chi_0 = \chi_0$.

If the restriction $\Res_{\bG^F}^{\widetilde{\bG}^F}(\widetilde{\chi})$ is irreducible then we have $\chi_0 = \chi$ and we're done, so we may assume this is not the case. As $\bG^F$ is a symplectic group we have $\Res_{\bG^F}^{\widetilde{\bG}^F}(\widetilde{\chi}) = \chi_0 + \chi_1$ with $\chi_1 \in \Irr(\bG^F)$, so either $\chi = \chi_0$ or $\chi = \chi_1$. If $\chi = \chi_0$ we're done. Assume $\chi = \chi_1$ and ${}^{\sigma}\chi = \chi_0$ then ${}^{\sigma^2}\chi = {}^{\sigma}\chi_0 = \chi_0 = {}^{\sigma}\chi$ which implies ${}^{\sigma}\chi = \chi$, a contradiction. Therefore we have ${}^{\sigma}\chi = \chi$ as desired.
\end{proof}

\section{Quasi-Isolated Series}\label{sec:quasi-isolated-series}
\begin{pa}\label{pa:std-levi-para}
We now wish to understand the effect of the field automorphism $\sigma$ on a geometric Lusztig series $\mathcal{E}_0(\bG^F,s)$ when $s \in\bT_0^{\star}$ is quasi-isolated in $\bG^{\star}$. We already know, by \cref{thm:cuspidal-fixed}, that ${}^{\sigma}\mathcal{E}_0(\bG^F,s) = \mathcal{E}_0(\bG^F,s)$ and that any cuspidal character contained in $\mathcal{E}_0(\bG^F,s)$ is $\sigma$-fixed. Using this we will now show that all the characters in $\mathcal{E}_0(\bG^F,s)$ are $\sigma$-fixed. To do this we will use Harish-Chandra theory. With this in mind, let $I \subseteq \Delta$ be an $F$-stable subset of simple roots then the corresponding parabolic and Levi subgroups $\bL_I \leqslant \bP_I$ are $F$-stable. We have a corresponding Harish-Chandra induction map $R_I^{\bG^F} = R_{\bL_I^F \subseteq \bP_I^F}^{\bG^F}$ which is defined to be the composition of inflation from $\bL_I^F$ to $\bP_I^F$ followed by induction from $\bP_I^F$ to $\bG^F$. Note, this coincides with the map $R_{I,1}^{\bG}$ defined in \cref{pa:twisted-induction}.
\end{pa}

\begin{pa}
If $\lambda \in \Irr(\bL_I^F)$ is a cuspidal irreducible character then we denote by $\mathcal{E}(\bG^F,I,\lambda) \subseteq \Irr(\bG^F)$ the irreducible constituents of $R_I^{\bG^F}(\lambda)$. Moreover, we denote by $\Cusp(\bG^F)$ the set of pairs $(I,\lambda)$ consisting of an $F$-stable subset $I \subseteq \Delta$ of simple roots and a cuspidal character $\lambda \in \Irr(\bL_I^F)$. We define an equivalence relation $\sim$ on $\Cusp(\bG^F)$ by setting $(I,\lambda)\sim(J,\mu)$ if there exists an element $w \in W^F$ such that $w(I) = J$ and ${}^g\lambda = \mu$; where $g \in N_{\bG}(\bT_0)^F$ represents $w$. Note, such an element exists by the Lang--Steinberg Theorem and satisfies ${}^g\bL_I = \bL_{w(I)} = \bL_J$. We then have a disjoint union
%%%%
\begin{equation*}
\Irr(\bG^F) = \bigsqcup_{[I,\lambda] \in \Cusp(\bG^F)/\sim} \mathcal{E}(\bG^F,I,\lambda),
\end{equation*}
%%%%
where the union runs over the corresponding equivalence classes. The sets $\mathcal{E}(\bG^F,I,\lambda)$ are called Harish-Chandra series.
\end{pa}

\begin{pa}
By \cite[11.10]{bonnafe:2006:sln} it is known that any Lusztig series $\mathcal{E}_0(\bG^F,s,a^{\star})$ is a disjoint union of Harish-Chandra series. Thus, it suffices to show that for each Harish-Chandra series $\mathcal{E}(\bG^F,I,\lambda) \subseteq \mathcal{E}_0(\bG^F,s)$ we have ${}^{\sigma}\mathcal{E}(\bG^F,I,\lambda) = \mathcal{E}(\bG^F,I,\lambda)$ and, moreover, each character in $\mathcal{E}(\bG^F,I,\lambda)$ is $\sigma$-fixed. As $\sigma(\bL_I) = \bL_{\sigma(I)}$ and $\sigma(\bP_I) = \bP_{\sigma(I)}$ we clearly have ${}^{\sigma}R_I^{\bG^F}(\lambda) = R_{\sigma(I)}^{\bG^F}({}^{\sigma}\lambda)$. As $\sigma$ is a split Frobenius endomorphism we have $\sigma(I) = I$ so it suffices to show ${}^{\sigma}\lambda = \lambda$. To do this, we need to describe the possible Harish-Chandra series contained in $\mathcal{E}_0(\bG^F,s)$. This amounts to the following well known result.
\end{pa}

\begin{lem}\label{lem:cusp-Levi}
Assume $s^2 = 1$ and $(I,\lambda) \in \Cusp(\bG^F)$ satisfies $\mathcal{E}(\bG^F,I,\lambda) \subseteq \mathcal{E}_0(\bG^F,s)$ then the derived subgroup of $\bL_I$ is $\bL_I' = \Sp_{2k}(\mathbb{K})$, where $k = e(e+1)$ for some $e \geqslant 0$. Moreover, we have $\bL_I = \bL_I' \times Z^{\circ}(\bL_I)$ and $\lambda = \theta\boxtimes\psi$ with $\psi \in \mathcal{E}_0(\bL_I'^F,s')$ a cuspidal character, $s'^2 = 1$, and $\theta \in \Irr(Z^{\circ}(\bL_I)^F)$ satisfies $\theta^2 = 1$.
\end{lem}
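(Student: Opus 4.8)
The plan is to reduce everything to the internal structure of the standard Levi subgroups of $\bG=\Sp_{2n}(\mathbb{K})$ together with the behaviour of cuspidal characters and of Lusztig series under direct products and under the duality $\Sp_{2n}\leftrightarrow\SO_{2n+1}$, deferring the one genuinely arithmetic point to \cref{lem:cuspidal-char}.

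First I would fix the shape of $\bL_I$. As $F=F_q$ is a split Frobenius it fixes $\Delta$ pointwise, so every $I\subseteq\Delta$ is $F$-stable, and inspection of the Dynkin diagram of type $\C_n$ gives $\bL_I\cong\GL_{a_1}(\mathbb{K})\times\cdots\times\GL_{a_r}(\mathbb{K})\times\Sp_{2m}(\mathbb{K})$ with $a_1+\cdots+a_r+m=n$, the symplectic factor being the connected component of $I$ containing the long simple root (so $m=0$ if there is none). Thus $\bL_I^F\cong\GL_{a_1}(q)\times\cdots\times\GL_{a_r}(q)\times\Sp_{2m}(q)$, $\bL_I'=\SL_{a_1}\times\cdots\times\SL_{a_r}\times\Sp_{2m}$ and $Z^{\circ}(\bL_I)$ is the central torus of dimension $r$. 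Since a cuspidal character of a direct product of finite reductive groups is an external tensor product of cuspidal characters of the factors, $\lambda=\lambda_1\boxtimes\cdots\boxtimes\lambda_r\boxtimes\psi$ with each $\lambda_i\in\Irr(\GL_{a_i}(q))$ and $\psi\in\Irr(\Sp_{2m}(q))$ cuspidal.

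Now I would exploit the hypothesis $\mathcal{E}(\bG^F,I,\lambda)\subseteq\mathcal{E}_0(\bG^F,s)$. Here $R_I^{\bG^F}=R_{\bL_I\subseteq\bP_I}^{\bG}$, and since twisted induction is compatible with geometric Lusztig series and with the dual inclusion $\bL_I^{\star}\hookrightarrow\bG^{\star}$ (cf.\ \cite{bonnafe:2006:sln} and \cite[Ch.~13]{digne-michel:1991:representations-of-finite-groups-of-lie-type}), $\lambda$ lies in $\mathcal{E}_0(\bL_I^F,s_L)$ for some semisimple $s_L\in\bL_I^{\star F^{\star}}$ that is $\bG^{\star}$-conjugate to $s$; in particular $s_L^2=1$. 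Writing $s_L=(s_1,\dots,s_r,s')$ under $\bL_I^{\star}\cong\GL_{a_1}^{\star}\times\cdots\times\GL_{a_r}^{\star}\times\SO_{2m+1}$ gives $s_i^2=1$ and $s'^2=1$. But $\lambda_i$ is a cuspidal character of $\GL_{a_i}(q)$ in the series of $s_i$, and a Lusztig series of $\GL_a(q)$ contains a cuspidal character only if the characteristic polynomial of the labelling element is irreducible over $\mathbb{F}_q$; as $s_i^2=1$ this polynomial divides $(X-1)(X+1)$, so $a_i=1$ for all $i$. Hence every $\GL$-factor is a one-dimensional torus, $\bL_I=Z^{\circ}(\bL_I)\times\bL_I'$ with $\bL_I'=\Sp_{2m}(\mathbb{K})$ and $Z^{\circ}(\bL_I)^F\cong(\mathbb{F}_q^{\times})^r$, and accordingly $\lambda=\theta\boxtimes\psi$ with $\psi\in\mathcal{E}_0(\bL_I'^F,s')$ cuspidal, $s'^2=1$, and $\theta\in\Irr(Z^{\circ}(\bL_I)^F)$ the character corresponding to $(s_1,\dots,s_r)$, whence $\theta^2=1$.

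It remains to prove $m=e(e+1)$, and this is the only substantive point. Choosing a regular embedding of $\bL_I'=\Sp_{2m}(\mathbb{K})$ and using \cite[12.1]{bonnafe:2006:sln}, cuspidality of $\psi$ is equivalent to cuspidality of a character $\widetilde{\psi}$ of the larger group lying in the geometric series of a dual lift $\widetilde{s'}$ of $s'$. One then wants to invoke \cref{lem:cuspidal-char}, whose standing hypothesis is that $\iota^{\star}(\widetilde{s'})$ be quasi-isolated; the key claim is that this is forced here by the quasi-isolatedness of $s$ in $\bG^{\star}$. To see it one tracks the eigenspace dimensions $\dim\Ker(s\mp\ID)$ through the block decomposition $\bL_I^{\star}\hookrightarrow\bG^{\star}=\SO_{2n+1}$ and analyses how the torus factors of $\bL_I^{\star}$ sit inside the $(-\ID)$-eigenspace of $s$. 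With that established, \cref{lem:cuspidal-char} yields $m=e(e+1)$ and, through its uniqueness clause, the remaining assertions. I expect this eigenspace/quasi-isolatedness bookkeeping to be the main obstacle; by contrast, the reduction carried out in the previous two paragraphs is routine structure theory for Levi subgroups.
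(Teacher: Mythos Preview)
Your approach is essentially the same as the paper's: identify the Levi as $\prod_i\GL_{a_i}\times\Sp_{2m}$, use $s^2=1$ to force $a_i=1$ via cuspidality in $\GL_{a_i}(q)$, and read off the direct product decomposition and $\theta^2=1$. The paper compresses your first two paragraphs into one sentence citing \cite[7.12]{lusztig:1977:irreducible-representations-of-finite-classical-groups} and declares the rest easy; your explicit argument that a cuspidal of $\GL_a(q)$ in the series of an involution forces $a=1$ is exactly what that citation unpacks to.

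Your final paragraph, however, manufactures an obstacle that is not there. You already have $s'^2=1$, and by the classification of quasi-isolated elements you cite earlier in the paper (see \cite[4.11]{bonnafe:2005:quasi-isolated}, used in \cref{sec:mult-1}), an element of $\SO_{2m+1}(\mathbb{K})$ is quasi-isolated if and only if it squares to the identity. So $s'$ is quasi-isolated in the dual of $\bL_I'=\Sp_{2m}$ immediately; no eigenspace bookkeeping is required, and the quasi-isolatedness of $s$ in $\bG^{\star}$ plays no further role. You should also note that invoking \cref{lem:cuspidal-char} for $\Sp_{2m}$ yields $m=e(e+1)+f^2$ rather than $m=e(e+1)$; the paper's proof does not derive the precise form of $k$ either, simply deferring to \cite[7.12]{lusztig:1977:irreducible-representations-of-finite-classical-groups}, and indeed the exact value of $k$ is not used in what follows---only that $\bL_I'$ is symplectic and $s'^2=1$, so that \cref{thm:cuspidal-fixed} applies to $\psi$.
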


\begin{proof}
The Levi subgroup $\bL_I^F$ is a direct product $\prod_{i=1}^t \GL_{n_i}(q) \times \Sp_{2k}(q)$, embedded diagonally in $\bG^F$, with $n = k + n_1 + \cdots + n_t$; see \cite[1.7.3]{geck:2003:intro-to-algebraic-geometry}. Moreover, the dual group $\bL_I^{\star F^{\star}}$ is also a direct product $\prod_{i=1}^t \GL_{n_i}(q) \times \SO_{2k+1}(q)$ embedded diagonally in $\bG^{\star F^{\star}}$. The fact that $n_1 = \cdots = n_t = 1$ follows from \cite[7.12]{lusztig:1977:irreducible-representations-of-finite-classical-groups} together with our assumption that $s^2 = 1$; the rest of the statement is easy.
\end{proof}

\begin{pa}\label{pa:HC-setup}
Let us continue with the notation and assumptions of \cref{lem:cusp-Levi}. It follows from \cref{thm:cuspidal-fixed} that ${}^{\sigma}\psi = \psi$. If $r$ is such that $\sigma = F_r$ then we have ${}^{\sigma}\theta = \theta^r = \theta$ because $\theta^2 = 1$ and $p$ is odd. In particular, we have ${}^{\sigma}\lambda = \lambda$ so ${}^{\sigma}R_I^{\bG^F}(\lambda) = R_I^{\bG^F}(\lambda)$. By \cite{geck:1993:a-note-on-harish-chandra} and \cite[8.6]{lusztig:1984:characters-of-reductive-groups}, there exists an extension $\widetilde{\lambda} \in \Irr(N_{\bG^F}(\bL_I,\lambda))$ of $\lambda$ to its stabiliser in $N_{\bG^F}(\bL_I)$. Clearly $\sigma(N_{\bG^F}(\bL_I,\lambda)) = N_{\bG^F}(\bL_I,{}^{\sigma}\lambda) = N_{\bG^F}(\bL_I,\lambda)$ and so ${}^{\sigma}\widetilde{\lambda} \in \Irr(N_{\bG^F}(\bL_I,\lambda))$ is also an extension of $\lambda$. To study the action of $\sigma$ on $\mathcal{E}(\bG^F,I,\lambda)$ we need to understand how $\sigma$ acts on $\widetilde{\lambda}$ and for this we need to understand $N_{\bG^F}(\bL_I)$. We decompose $\bL_I$ as $Z^{\circ}(\bL_I) \times \bL_I'$, as in \cref{lem:cusp-Levi}. Recall that the group $\bL_I$ embeds naturally in a subsystem subgroup $\bM \times \bL_I' = \Sp_{2(n-k)}(\mathbb{K}) \times \Sp_{2k}(\mathbb{K}) \leqslant \bG$ such that $Z^{\circ}(\bL_I)$ is a maximal torus of $\bM$. This subgroup is $F$-stable and the corresponding subgroup of $F$-fixed points is $\Sp_{2(n-k)}(q) \times \Sp_{2k}(q)$.
\end{pa}

\begin{pa}\label{pa:sigma-stab}
We claim that $N_{\bG^F}(\bL) = \bL_I'^F \times N_{\bM^F}(Z^{\circ}(\bL_I))$. If $I \subseteq \Delta$ is as in \cref{pa:HC-setup} then we have $W_{\bG}(\bL_I)$ is isomorphic to $N_W(W_I)/W_I$, where $W_I \leqslant W$ is the corresponding parabolic subgroup. Furthermore, this isomorphism is compatible with the induced actions of $F$. As $F$ acts trivially on $W$ this implies that we have
%%%%
\begin{equation*}
N_W(W_I)/W_I = (N_W(W_I)/W_I)^F \cong (N_{\bG}(\bL_I)/\bL_I)^F \cong N_{\bG}(\bL_I)^F/\bL_I^F,
\end{equation*}
%%%%
where the last isomorphism holds because $F$ is a Frobenius endomorphism and $\bL_I$ is connected, see \cite[3.13]{digne-michel:1991:representations-of-finite-groups-of-lie-type}. Now the group $N_W(W_I)/W_I$ is well known to be a Coxeter group of type $\B_{n-k}$, see \cite{howlett:1980:normalizers-of-parabolic-subgroups} for instance. From this the claim follows. We may now prove the following.
\end{pa}

\begin{lem}\label{lem:sigma-fixed-extension}
Any extension $\widetilde{\lambda} \in \Irr(N_{\bG^F}(\bL_I,\lambda))$ of $\lambda$ satisfies ${}^{\sigma}\widetilde{\lambda} = \widetilde{\lambda}$.
\end{lem}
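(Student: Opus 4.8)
The plan is to reduce the statement, via the direct-product structure recorded in \cref{lem:cusp-Levi,pa:sigma-stab}, to an elementary computation with a linear character.

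First I would pin down the stabiliser $N_{\bG^F}(\bL_I,\lambda)$. By \cref{pa:sigma-stab} we have $N_{\bG^F}(\bL_I)=\bL_I'^F\times N_{\bM^F}(Z^{\circ}(\bL_I))$, which is an internal direct product because $\bL_I'$ and $\bM$ are commuting direct factors of the subsystem subgroup $\bM\times\bL_I'\leqslant\bG$. Since $\bL_I'^F$ acts on $\Irr(\bL_I^F)$ only through inner automorphisms of $\bL_I^F$, while $N_{\bM^F}(Z^{\circ}(\bL_I))$ centralises $\bL_I'$ and so acts on $\lambda=\theta\boxtimes\psi$ only via $\theta$, the stabiliser of $\lambda$ is
\[
N_{\bG^F}(\bL_I,\lambda)=\bL_I'^F\times N_{\bM^F}(Z^{\circ}(\bL_I),\theta).
\]
Matching external tensor factors then forces any extension $\widetilde\lambda$ of $\lambda$ to have the form $\psi\boxtimes\widetilde\theta$, where $\widetilde\theta\in\Irr(N_{\bM^F}(Z^{\circ}(\bL_I),\theta))$ restricts to $\theta$ on $Z^{\circ}(\bL_I)^F$; as $\widetilde\theta(1)=\theta(1)=1$, the character $\widetilde\theta$ is linear. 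Since $\sigma$ stabilises $\bL_I'$, $\bM$ and $Z^{\circ}(\bL_I)$ and satisfies ${}^{\sigma}\theta=\theta$ and ${}^{\sigma}\psi=\psi$ (c.f. \cref{pa:HC-setup}), we get ${}^{\sigma}\widetilde\lambda=\psi\boxtimes{}^{\sigma}\widetilde\theta$, so it remains to show ${}^{\sigma}\widetilde\theta=\widetilde\theta$.

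For this I would argue directly on the finite group $N_{\bM^F}(Z^{\circ}(\bL_I),\theta)$. Because $\bM=\Sp_{2(n-k)}(\mathbb{K})$ is $\mathbb{F}_p$-split and $Z^{\circ}(\bL_I)$ is a split maximal torus of $\bM$, one can choose, as in \cref{pa:weyl-group-reps} but for $\bM$, representatives $n_w\in N_{\bM}(Z^{\circ}(\bL_I))$ for the elements of $N_{\bM}(Z^{\circ}(\bL_I))/Z^{\circ}(\bL_I)$ that are fixed by $F_p$, hence by $\sigma=F_r$; equivalently, the map $N_{\bM}(Z^{\circ}(\bL_I))^{F_p}\to N_{\bM}(Z^{\circ}(\bL_I))/Z^{\circ}(\bL_I)$ is surjective since $Z^{\circ}(\bL_I)$ is connected. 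Then every $h\in N_{\bM^F}(Z^{\circ}(\bL_I),\theta)$ is of the form $t\,n_w$ with $t\in Z^{\circ}(\bL_I)^F$ and $w$ in the stabiliser of $\theta$. Since $\sigma$ fixes each $n_w$ and acts on $Z^{\circ}(\bL_I)^F\cong(\mathbb{F}_q^{\times})^{\,n-k}$ by $t\mapsto t^{r}$, with $r$ a power of the odd prime $p$ and hence coprime to $q-1$ with odd inverse $r'$ modulo $q-1$, the linearity of $\widetilde\theta$ gives
\[
{}^{\sigma}\widetilde\theta(t\,n_w)=\widetilde\theta\bigl(t^{r'}n_w\bigr)=\theta(t)^{r'}\,\widetilde\theta(n_w)=\theta(t)\,\widetilde\theta(n_w)=\widetilde\theta(t\,n_w),
\]
where the penultimate equality uses $\theta^2=1$ (so $\theta(t)=\pm1$) together with $r'$ odd. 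Hence ${}^{\sigma}\widetilde\theta=\widetilde\theta$, and the lemma follows.

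I do not expect a serious obstacle: the two points requiring a little care are the identification of $N_{\bG^F}(\bL_I,\lambda)$ with the displayed direct product — which rests on \cref{pa:sigma-stab} and on $\bL_I'$ being a direct factor of a subsystem subgroup of $\bG$ — and the availability of $F_p$-fixed coset representatives in $N_{\bM}(Z^{\circ}(\bL_I))$, which follows from the block structure of $\Sp_{2n}(\mathbb{K})$ exactly as in \cref{pa:weyl-group-reps}. It is worth emphasising that the computation never uses a particular choice of extension $\widetilde\lambda$, so the conclusion holds for every such extension, as claimed.
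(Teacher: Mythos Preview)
Your argument is correct and follows essentially the same route as the paper: identify $N_{\bG^F}(\bL_I,\lambda)=\bL_I'^F\times N_{\bM^F}(Z^{\circ}(\bL_I),\theta)$, factor $\widetilde\lambda=\psi\boxtimes\widetilde\theta$, reduce to ${}^{\sigma}\widetilde\theta=\widetilde\theta$, and verify this using linearity of $\widetilde\theta$ together with $F_p$-fixed coset representatives. The only cosmetic difference is that on the torus you compute $\sigma^{-1}(t)=t^{r'}$ explicitly and invoke $r'$ odd, whereas the paper simply quotes the already-established fact ${}^{\sigma}\theta=\theta$ from \cref{pa:HC-setup}; both yield the same conclusion.
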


\begin{proof}
From \cref{pa:sigma-stab} we see that the stabiliser $N_{\bG^F}(\bL_I,\lambda)$ is simply $\bL_I'^F \times N_{\bM^F}(Z^{\circ}(\bL_I),\theta)$ so the extension $\widetilde{\lambda}$ is of the form $\psi \boxtimes \widetilde{\theta}$ where $\widetilde{\theta} \in \Irr(N_{\bM^F}(Z^{\circ}(\bL_I),\theta))$ is an extension of $\theta$. As ${}^{\sigma}\psi = \psi$ we need only show that ${}^{\sigma}\widetilde{\theta} = \widetilde{\theta}$. Now, as $\theta$ is a linear character so is $\widetilde{\theta}$ which implies that $\widetilde{\theta}$ is a homomorphism. Each coset $nZ^{\circ}(\bL_I) \in W_{\bM}(Z^{\circ}(\bL_I))$ is $F_p$-stable and thus contains an $F_p$-fixed point. These coset representatives are then also $\sigma$-fixed and $F$-fixed. This implies that $N_{\bM^F}(Z^{\circ}(\bL_I))$ is generated by $Z^{\circ}(\bL_I)^F$ and a set of $\sigma$-fixed coset representatives. As ${}^{\sigma}\widetilde{\theta}(t) = {}^{\sigma}\theta(t) = \theta(t) = \widetilde{\theta}(t)$ for any $t \in Z^{\circ}(\bL_I)^F$ and clearly ${}^{\sigma}\widetilde{\theta}(n) = \widetilde{\theta}(n)$ for any $n \in N_{\bM^{F_p}}(\bT_0) \leqslant N_{\bM^F}(\bT_0)$ we must have ${}^{\sigma}\widetilde{\theta} = \widetilde{\theta}$ because it's a homomorphism.
\end{proof}

\begin{pa}
We are now ready to prove the main result of this article. We note that by \cref{lem:DL-geo-series-compare,pa:geometric-series} the statement given below is equivalent to the statement given in \cref{thm:main-theorem}.
\end{pa}

\begin{thm}\label{prop:quasi-iso-fixed}
Recall our assumption that $\bG$ is the symplectic group $\Sp_{2n}(\mathbb{K})$ and $p$ is odd. Assume $s \in \bT_0^{\star}$ is quasi-isolated in $\bG^{\star}$ then ${}^{\sigma}\chi = \chi$ for all $\chi \in \mathcal{E}_0(\bG^F,s)$.
\end{thm}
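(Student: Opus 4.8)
The plan is to deduce this from the already-established fact \cref{thm:cuspidal-fixed}, that every cuspidal character lying in a quasi-isolated series is $\sigma$-fixed, by propagating it through the Harish-Chandra series using the relative-Weyl-group parametrisation; all of the preparatory statements in \crefrange{pa:std-levi-para}{lem:sigma-fixed-extension} are set up precisely for this. By \cite[11.10]{bonnafe:2006:sln} the series $\mathcal{E}_0(\bG^F,s)$ is a disjoint union of Harish-Chandra series, so it suffices to fix a cuspidal pair $(I,\lambda)\in\Cusp(\bG^F)$ with $\mathcal{E}(\bG^F,I,\lambda)\subseteq\mathcal{E}_0(\bG^F,s)$ and to show that $\sigma$ stabilises $\mathcal{E}(\bG^F,I,\lambda)$ and fixes each of its constituents.

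For the first point: by \cref{lem:cusp-Levi} we may write $\lambda=\theta\boxtimes\psi$ with $\psi\in\mathcal{E}_0(\bL_I'^F,s')$ cuspidal, $s'^2=1$, and $\theta\in\Irr(Z^{\circ}(\bL_I)^F)$ satisfying $\theta^2=1$. Then \cref{thm:cuspidal-fixed} gives ${}^{\sigma}\psi=\psi$, while ${}^{\sigma}\theta=\theta^r=\theta$ since $p$ is odd; hence ${}^{\sigma}\lambda=\lambda$. As $\sigma$ is a split Frobenius endomorphism it fixes $I\subseteq\Delta$, so $\sigma(\bL_I)=\bL_I$ and $\sigma(\bP_I)=\bP_I$, whence ${}^{\sigma}R_I^{\bG^F}(\lambda)=R_I^{\bG^F}({}^{\sigma}\lambda)=R_I^{\bG^F}(\lambda)$ and therefore ${}^{\sigma}\mathcal{E}(\bG^F,I,\lambda)=\mathcal{E}(\bG^F,I,\lambda)$.

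For the second point I would invoke the Howlett--Lehrer--Lusztig parametrisation of a Harish-Chandra series in the $\sigma$-equivariant form worked out by Malle and Sp\"ath \cite{malle-spaeth:2016:characters-of-odd-degree}. Fixing an extension $\widetilde{\lambda}\in\Irr(N_{\bG^F}(\bL_I,\lambda))$ of $\lambda$, which exists by \cite{geck:1993:a-note-on-harish-chandra} and \cite[8.6]{lusztig:1984:characters-of-reductive-groups}, one has a bijection $\mathcal{E}(\bG^F,I,\lambda)\to\Irr(N_{\bG^F}(\bL_I,\lambda)/\bL_I^F)$ under which the action of $\sigma$ on the left corresponds to tensoring with a linear character of $N_{\bG^F}(\bL_I,\lambda)/\bL_I^F$ measuring the difference between $\widetilde{\lambda}$ and ${}^{\sigma}\widetilde{\lambda}$, composed with the permutation of $\Irr(N_{\bG^F}(\bL_I,\lambda)/\bL_I^F)$ induced by the $\sigma$-action on this relative Weyl group. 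By \cref{lem:sigma-fixed-extension} we have ${}^{\sigma}\widetilde{\lambda}=\widetilde{\lambda}$, so the linear twist is trivial; and since $\sigma=F_r$ acts trivially on $W$ it acts trivially on the section $N_W(W_I)/W_I$, hence — by the $F$-compatible identification recorded in \cref{pa:sigma-stab} — trivially on $N_{\bG^F}(\bL_I)/\bL_I^F$, on its subgroup $N_{\bG^F}(\bL_I,\lambda)/\bL_I^F$, and on the irreducible characters of the latter. Hence $\sigma$ fixes every character in $\Irr(N_{\bG^F}(\bL_I,\lambda)/\bL_I^F)$, so by the equivariance of the parametrisation it fixes every $\chi\in\mathcal{E}(\bG^F,I,\lambda)$; ranging over the finitely many Harish-Chandra series inside $\mathcal{E}_0(\bG^F,s)$ gives the theorem.

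The main obstacle is the step invoking \cite{malle-spaeth:2016:characters-of-odd-degree}: one must extract from their treatment exactly which data the $\sigma$-action on a Harish-Chandra series depends on — the $\sigma$-orbit of $(\bL_I,\lambda)$, the $\sigma$-action on the chosen extension $\widetilde{\lambda}$, and the $\sigma$-action on the relative Weyl group together with its (a priori unequal) Hecke-algebra parameters — and then verify that, once \cref{lem:sigma-fixed-extension} has trivialised the twisting character and $\sigma$ acts trivially on the relative Weyl group, there is nothing left that can move the characters. Everything else is the bookkeeping already assembled in \crefrange{pa:std-levi-para}{lem:sigma-fixed-extension}, the real analytic input having been absorbed into \cref{thm:cuspidal-fixed}.
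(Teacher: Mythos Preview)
Your proposal is correct and follows essentially the same approach as the paper's proof: reduce to individual Harish-Chandra series, use \cref{lem:cusp-Levi} and \cref{thm:cuspidal-fixed} to show ${}^{\sigma}\lambda=\lambda$, invoke \cite[5.6]{malle-spaeth:2016:characters-of-odd-degree} for the $\sigma$-equivariant bijection with $\Irr(W_{\bG^F}(\bL_I,\lambda))$, observe that the twisting character $\delta_{\lambda,\sigma}$ is trivial because ${}^{\sigma}\widetilde{\lambda}=\widetilde{\lambda}$ by \cref{lem:sigma-fixed-extension}, and conclude since $\sigma$ acts trivially on the relative Weyl group. The paper cites exactly the result from Malle--Sp\"ath that resolves what you flag as the main obstacle, so your caution there is unnecessary.
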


\begin{proof}
It suffices to show that each element of a Harish-Chandra series $\mathcal{E}(\bG^F,I,\lambda) \subseteq \mathcal{E}_0(\bG^F,s)$ is fixed by $\sigma$. As discussed in \cref{pa:HC-setup} we have ${}^{\sigma}\lambda = \lambda$ and by \cref{lem:sigma-fixed-extension} any extension $\widetilde{\lambda} \in \Irr(N_{\bG^F}(\bL_I,\lambda))$ of $\lambda$ satisfies ${}^{\sigma}\widetilde{\lambda} = \widetilde{\lambda}$. If $W_{\bG^F}(\bL_I,\lambda) = N_{\bG^F}(\bL_I,\lambda)/\bL_I^F$ then we have a bijection $\Irr(W_{\bG^F}(\bL_I,\lambda)) \to \mathcal{E}(\bG^F,I,\lambda)$, denoted $\eta \mapsto R_I^{\bG^F}(\lambda)_{\eta}$, and by \cite[5.6]{malle-spaeth:2016:characters-of-odd-degree} this bijection can be chosen such that
%%%%
\begin{equation*}
{}^{\sigma}R_I^{\bG^F}(\lambda)_{\eta} = R_I^{\bG^F}({}^{\sigma}\lambda)_{{}^{\sigma}\eta} = R_I^{\bG^F}(\lambda)_{{}^{\sigma}\eta}.
\end{equation*}
%%%%
Note that the linear character denoted $\delta_{\lambda,\sigma}$ in \cite[5.6]{malle-spaeth:2016:characters-of-odd-degree} is trivial in our case because ${}^{\sigma}\widetilde{\lambda} = \widetilde{\lambda}$ is both an extension of $\lambda$ and ${}^{\sigma}\lambda$. The same argument as that used in \cref{pa:sigma-stab} shows that $\sigma$ is the identity on $W_{\bG^F}(\bL_I,\lambda)$ so ${}^{\sigma}\eta = \eta$ and the statement follows.
\end{proof}

\section{On the Inductive McKay Condition}
\begin{pa}
We now wish to show that the global portion of Sp\"ath's criterion for the inductive McKay condition holds for the irreducible characters of $\bG^F$, see \cite[Theorem 2.12]{spaeth:2012:inductive-mckay-defining}. Specifically let us recall from \cref{pa:autos-symp} that the semidirect product $\widetilde{\bG}^F \rtimes D$ acts on the set of irreducible characters $\Irr(\bG^F)$ and all automorphisms of $\bG^F$ appear in this action. With this we have the following; recall the notation of \cref{pa:stab-notation}.
\end{pa}

\begin{thm}\label{prop:mckay}
For any $\chi \in \Irr(\bG^F)$ we have $(\widetilde{\bG}^F \rtimes D)_{\chi} = \widetilde{\bG}^F_{\chi} \rtimes D_{\chi}$.
\end{thm}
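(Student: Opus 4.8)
The inclusion $\widetilde{\bG}^F_{\chi}\rtimes D_{\chi}\subseteq(\widetilde{\bG}^F\rtimes D)_{\chi}$ is immediate, so it remains to prove the reverse one. Given $(g,\sigma)\in(\widetilde{\bG}^F\rtimes D)_{\chi}$ it suffices to show $\sigma\in D_{\chi}$, since then ${}^{\sigma}\chi=\chi$ and hence ${}^{g}\chi=\chi$. As ${}^{\sigma}\chi={}^{g^{-1}}\chi$ this amounts to the assertion that \emph{if a field automorphism carries $\chi$ to a $\widetilde{\bG}^F$-conjugate of $\chi$, then it fixes $\chi$}. Writing $\widetilde{\chi}\in\Irr(\widetilde{\bG}^F)$ for a character with $\chi$ a constituent of $\Res^{\widetilde{\bG}^F}_{\bG^F}(\widetilde{\chi})$, this is vacuous when the restriction is irreducible; otherwise (since $\bG^F$ is a symplectic group the restriction then has exactly two constituents, as in the proof of \cref{thm:cuspidal-fixed}) the content is to rule out $\sigma$ interchanging the two constituents.

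The plan is to reduce to \cref{prop:quasi-iso-fixed} via \cref{thm:equivariance-DL-induction}. Since $\widetilde{\bG}^F=\bG^F\widetilde{\bT}_0^F$, conjugation by any element of $\widetilde{\bG}^F$ acts on $\Irr(\bG^F)$ as an inner automorphism of $\bG^F$ composed with $\imath_t$ for some $t\in\widetilde{\bT}_0^F$, and each such $\imath_t$, like each field automorphism $\sigma\in D$, lies in $\Iso(\mathcal{G},F)$; thus \cref{prop:equivariance,thm:equivariance-DL-induction} apply uniformly to all the automorphisms in play. Write $\chi\in\mathcal{E}_0(\bG^F,s,a^{\star})$ and, after replacing $(s,a^{\star})$ by a $W^{\star}$-conjugate, assume the Levi cover $\bL_I^{\star}$ of $C_{\bG^{\star}}(s)$ is standard, so that $s$ is quasi-isolated in $\bL_I^{\star}$. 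By \cref{thm:Lusztig-bijection} the map $\pm R_{I,w_1}^{\bG}$ identifies $\mathcal{E}_0(\bL_I^{Fw_1},s,a^{\star}w_1^{\star-1})$ with $\mathcal{E}_0(\bG^F,s,a^{\star})$, and by \cref{thm:equivariance-DL-induction} this identification intertwines the action of $\widetilde{\bG}^F\rtimes D$ on the $\bG^F$-side with the action of a group of automorphisms of $\bL_I^{Fw_1}$ of the shape $\widetilde{\bL}_I^{Fw_1}\rtimes D_L$, with $D_L$ a group of field automorphisms. Hence it suffices to prove the factorisation of the stabiliser for the character $\chi_1\in\mathcal{E}_0(\bL_I^{Fw_1},s,a^{\star}w_1^{\star-1})$ corresponding to $\chi$.

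Now one analyses $\bL_I$. As $\bG=\Sp_{2n}(\mathbb{K})$ with dual $\SO_{2n+1}(\mathbb{K})$, we have $\bL_I=\bigl(\prod_j\GL_{m_j}(\mathbb{K})\bigr)\times\bL_I'$ with $\bL_I'=\Sp_{2m}(\mathbb{K})$, and $s$ being quasi-isolated in $\bL_I^{\star}$ means, by the classification of quasi-isolated semisimple elements, that the projection of $s$ to each $\GL_{m_j}$-dual factor is central while its projection to the $\SO_{2m+1}$-dual factor squares to $1$. Correspondingly $\chi_1=\bigl(\boxtimes_j\mu_j\bigr)\boxtimes\nu$, where each $\mu_j$ lies in a Lusztig series of a general linear (or, after the $w_1$-twist, unitary) group over a finite extension of $\mathbb{F}_q$ labelled by a central element, and $\nu$ lies in a quasi-isolated Lusztig series of a symplectic group over a finite field of odd order. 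Grouping the general linear and unitary factors together, $D_L$ and $\widetilde{\bL}_I^{Fw_1}$ preserve the decomposition of $\bL_I^{Fw_1}$ into this ``linear part'' and the symplectic part; on the linear part the centre is connected so the regular embedding is trivial there and inner automorphisms fix characters, whence the stabiliser of $\boxtimes_j\mu_j$ in ``linear part $\rtimes D_L$'' factors trivially; on the symplectic part \cref{prop:quasi-iso-fixed} applies to $D_L$ restricted there — which acts by field automorphisms up to an inner automorphism since $\Sp$ has no graph automorphisms — and gives that $D_L$ fixes $\nu$, so that stabiliser factors as $(\widetilde{\bL}_I'^{Fw_1})_{\nu}\rtimes D_L$. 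Since the stabiliser of a box product $\chi_1\boxtimes\chi_2$ in $(H_1\times H_2)\rtimes A$, for $A$ acting factor-wise, factors as soon as each $(H_i\rtimes A)_{\chi_i}$ does, this completes the reduction and hence the proof.

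The step I expect to be the main obstacle is the reduction in the second paragraph: making precise that $R_{I,w_1}^{\bG}$ simultaneously intertwines the \emph{whole} group $\widetilde{\bG}^F\rtimes D$ with a group of the shape $\widetilde{\bL}_I^{Fw_1}\rtimes D_L$ on the Levi — in particular that the twists ``$n\sigma$'' produced by \cref{thm:equivariance-DL-induction} keep $D_L$ genuinely a group of field automorphisms of $\bL_I^{Fw_1}$ and preserve the semidirect-product shape — together with keeping track of the gluing of general linear blocks and of the possible appearance of unitary blocks in $\bL_I^{Fw_1}$, so as to guarantee the connectedness of the centre (hence the absence of diagonal automorphisms) used for the linear part.
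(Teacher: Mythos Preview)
Your overall strategy coincides with the paper's: transport the question to the Levi $\bL_I^{Fw_1}$ via \cref{thm:equivariance-DL-induction}, split $\bL_I=\bM_{\C}\times\bM_{\A}$ into a symplectic factor and a product of general linear groups, use \cref{prop:quasi-iso-fixed} on the $\C$-factor and the connected-centre argument on the $\A$-factor. The gap you yourself identify is exactly the point where your write-up is not yet a proof, and the paper resolves it in a different way from what you suggest.

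You try to package the transferred automorphisms into a subgroup $D_L$ of \emph{field} automorphisms of $\bL_I^{Fw_1}$, justifying this for the $\C$-factor by ``$\Sp$ has no graph automorphisms''. That reasoning does not suffice: absence of graph automorphisms only tells you that $n\sigma$ restricted to $\bM_{\C}$ is, as an algebraic automorphism, inner times field, but the inner factor need not come from a \emph{rational} element of $\bM_{\C}^F$, so on the finite group it could in principle be a diagonal automorphism composed with a field automorphism---and \cref{prop:quasi-iso-fixed} says nothing about diagonal automorphisms. The paper does not attempt to build a group $D_L$ at all. Instead it works element by element: for a fixed $\sigma$ it shows one may choose the twisting element $n$ in \cref{prop:equivariance} to lie in $C_{\bG}(\bM_{\C})$, so that $n\sigma|_{\bM_{\C}^F}=\sigma|_{\bM_{\C}^F}$ is literally the field automorphism and \cref{prop:quasi-iso-fixed} applies on the nose. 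This uses two concrete facts: (i) with the choice of representatives in \cref{pa:weyl-group-reps}, every $n_w$ for $w\in N_W(I)$ already lies in $\widehat{\bM}_{\A}\leqslant C_{\bG}(\bM_{\C})$ (Howlett's description of $N_W(W_I)$), and (ii) the Lang--Steinberg correction $s$ in the proof of \cref{prop:equivariance} can be taken inside the connected torus $\bT_0\cap\widehat{\bM}_{\A}$. Likewise for a diagonal element $t\in\widetilde{\bT}_0^F$, the corresponding twist $mt$ is handled directly: on $\bM_{\A}^{Fw_1}$ it is a diagonal automorphism of a product of $\GL$/$\GU$ groups, hence inner, so ${}^{mt}\psi_{\A}=\psi_{\A}$; on $\bM_{\C}^F$ it may be genuinely diagonal, and this is where the nontrivial part of the stabiliser lives.

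With these explicit choices, the paper concludes by a direct computation rather than by your abstract factorisation lemma: ${}^{\sigma}\chi=\chi\Leftrightarrow{}^{n\sigma}\psi_{\A}=\psi_{\A}$ (since ${}^{\sigma}\psi_{\C}=\psi_{\C}$), ${}^{t}\chi=\chi\Leftrightarrow{}^{mt}\psi_{\C}=\psi_{\C}$ (since ${}^{mt}\psi_{\A}=\psi_{\A}$), and ${}^{t\sigma}\chi=\chi\Leftrightarrow$ both conditions hold. Your approach can be completed along these lines, but the key input is the choice $n\in C_{\bG}(\bM_{\C})$, not the absence of graph automorphisms.
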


\begin{proof}
Let us assume that $\chi \in \mathcal{E}_0(\bG^F,s,a^{\star})$. We need to show that for any $g \in \widetilde{\bG}^F$ and $\sigma \in D$ we have ${}^{\sigma g}\chi = \chi$ if and only if ${}^g\chi = \chi$ and ${}^{\sigma}\chi = \chi$. Assume first that $\sigma$ is such that ${}^{\sigma}\mathcal{E}_0(\bG^F,s,a^{\star}) \neq \mathcal{E}_0(\bG^F,s,a^{\star})$. For any $g \in \widetilde{\bG}^F$ we have ${}^g\chi \in \mathcal{E}_0(\bG^F,s,a^{\star})$ which means ${}^{\sigma g}\chi \in {}^{\sigma}\mathcal{E}_0(\bG^F,s,a^{\star})$. As ${}^{\sigma g}\chi$ and $\chi$ are contained in different series we must have ${}^{\sigma g}\chi \neq \chi$ so there is nothing to check in this case.

Now assume ${}^{\sigma}\mathcal{E}_0(\bG^F,s,a^{\star}) = \mathcal{E}_0(\bG^F,s,a^{\star})$ then we are in the setup of \cref{sec:equivariance-twisted-induction}. After possibly replacing the pair $(s,a^{\star}) \in \mathcal{T}_{\bG^{\star}}(\bT_0^{\star},F^{\star})$ by another pair in the same $W^{\star}$-orbit we may assume that the Levi cover of $C_{\bG^{\star}}(s)$ is a standard Levi subgroup $\bL_I^{\star}$. We can then decompose $\bL_I^{\star}$ as a product $\bM_{\C}^{\star} \times \bM_{\A}^{\star}$ such that $\bM_{\C}^{\star}$ is a special orthogonal group and $\bM_{\A}^{\star}$ is a product of general linear groups. Note that these general linear groups may be tori, i.e., isomorphic to $\GL_1(\mathbb{K})$. Correspondingly, in $\bG$, we have $\bL_I = \bM_{\C} \times \bM_{\A}$ where $\bM_{\C}$ is a symplectic group and $\bM_{\A}$ is a direct product of general linear groups.

As $F^{\star}$ acts trivially on $\Delta^{\star}$ we have $T_{W^{\star}}(I^{\star},F^{\star}) = N_{W^{\star}}(I^{\star})$. Let us denote by $J \subseteq \Delta$, resp., $K \subseteq \Delta$, the set of simple roots such that the root subgroups $\{\bX_{\pm\alpha} \mid \alpha \in J\}$, resp., $\{\bX_{\pm\alpha} \mid \alpha \in K\}$, generate the derived subgroup of $\bM_{\C}$, resp., $\bM_{\A}$. An easy argument shows that for any element $x^{\star} \in N_{W^{\star}}(I^{\star})$ we have $x^{\star}(J^{\star}) = J^{\star}$ and moreover $x^{\star}$ must fix pointwise any root in $J^{\star}$; simply because $J^{\star}$ admits no graph automorphisms. In fact, for each element $x^{\star} \in N_{W^{\star}}(I^{\star})$ we can find a representative $n_{x^{\star}} \in N_{\bG^{\star}}(\bT_0^{\star}) \cap C_{\bG^{\star}}(\bM_{\C}^{\star})$. This follows from the fact that $\bL_I^{\star}$ is contained in a subgroup $\widehat{\bL}_I^{\star} = \bM_{\C}^{\star} \times \widehat{\bM}_{\A}^{\star}$, where $\widehat{\bM}_{\A}^{\star}$ is a full orthogonal group, together with \cite{howlett:1980:normalizers-of-parabolic-subgroups}.

Now assume $w_1^{\star} \in T_{W^{\star}}(I^{\star},F^{\star})$ and $z^{\star} \in T_{W^{\star}}(I^{\star},F^{\star})$ are as in \cref{pa:stab-Lusztig-series-1}. Choosing the representatives as above means that we have $\bL_I^{\star w_1^{\star}F^{\star}} = \bM_{\C}^{\star F^{\star}} \times \bM_{\A}^{\star w_1^{\star}F^{\star}}$. In $\bG$ we have $\bL_I$ is contained in a subgroup $\widehat{\bL}_I = \bM_{\C} \times \widehat{\bM}_{\A}$, where $\widehat{\bM}_{\A}$ is a symplectic group. This is a subsystem subgroup as in \cref{pa:HC-setup}. From our choice of representatives $n_w \in N_{\bG}(\bT_0)$, c.f., \cref{pa:weyl-group-reps}, we see that for any element $w \in Z_W(I,F) = N_W(I)$ we have $n_w \in N_{\bG}(\bT_0) \cap \widehat{\bM}_{\A} \leqslant C_{\bG}(\bM_{\C})$. Therefore, we similarly have $\bL_I^{Fw_1} = \bM_{\C}^F \times \bM_{\A}^{Fw_1}$.

If $\chi \in \mathcal{E}_0(\bG^F,s,a^{\star})$ then by \cref{thm:Lusztig-bijection} we have $\chi = (-1)^{\ell(w_1)}R_{I,w_1}^{\bG}(\psi)$ for a unique character $\psi \in \mathcal{E}_0(\bL_I^{Fw_1},s,b^{\star})$ where $b^{\star} = a^{\star}w_1^{\star-1}$. Let us write the element $s$ as a product $s_{\C}s_{\A}$ with $s_{\C} \in \bM_{\C}^{\star} \cap \bT_0^{\star}$ and $s_{\A} \in \bM_{\A}^{\star} \cap \bT_0^{\star}$. We then have $W^{\star}(s) = W_J^{\star}(s_{\C}) \times W_K^{\star}(s_{\A})$ and $W^{\star\circ}(s) = W_J^{\star\circ}(s_{\C}) \times W_K^{\star}(s_{\A})$ because $\bL_K$ has a connected centre. This means we can identify $\mathcal{A}_{W^{\star}}(s)$ and $\mathcal{A}_{W_J^{\star}}(s_{\C})$ and so also $\mathcal{A}_{W^{\star}}(s,F^{\star})$ and $\mathcal{A}_{W_J^{\star}}(s_{\C},F^{\star})$. With this in mind we have
%%%%
\begin{equation*}
\mathcal{E}_0(\bL_I^{Fw_1},s,b^{\star}) = \mathcal{E}_0(\bM_{\C}^F,s_{\C},b^{\star})\otimes \mathcal{E}_0(\bM_{\A}^{Fw_1},s_{\A}).
\end{equation*}
%%%%
Here $\otimes$ denotes tensor product of characters. Moreover, we have used that the rational and geometric Lusztig series of $\bM_{\A}^{Fw_1}$ coincide, see \cref{pa:geometric-series}.

With this we can decompose $\psi$ uniquely as a tensor product $\psi_{\C} \otimes \psi_{\A}$ with $\psi_{\C} \in \mathcal{E}_0(\bM_{\C}^F,s_{\C},b^{\star})$ and $\psi_{\A} \in \mathcal{E}_0(\bM_{\A}^{Fw_1},s_{\A})$. Recall the element $z \in W$ is such that $z(I) = z\sigma(I) = I$. To apply \cref{thm:equivariance-DL-induction} we must find an element $n \in N_{\bG}(\bT_0)$ such that $nF(\sigma(n_{w_1})n^{-1}) = nn_{w_1}F(n^{-1}) = n_{w_1}$. Inspecting the proof of \cref{prop:equivariance} we easily see that $n$ may be chosen to lie in $C_{\bG}(\bM_{\C})$ because $n_zF(\sigma(n_{w_1})n_z^{-1}n_{w_1}^{-1}) \in \bT_0 \cap \widehat{\bM}_{\A} \leqslant C_{\bG}(\bM_{\C})$ and this intersection is connected, which means the Lang--Steinberg theorem can be applied to it. Assuming $n$ is chosen in this way we have
%%%%
\begin{equation*}
{}^{\sigma}\chi = \chi \Leftrightarrow {}^{\sigma}R_{I,w_1}^{\bG}(\psi) = R_{I,w_1}^{\bG}(\psi) \Leftrightarrow {}^{n\sigma}\psi = \psi \Leftrightarrow {}^{\sigma}\psi_{\C} \otimes {}^{n\sigma}\psi_{\A} = \psi_{\C} \otimes \psi_{\A}.
\end{equation*}
%%%%
By \cref{prop:quasi-iso-fixed} we have ${}^{\sigma}\psi_{\C} = \psi_{\C}$ so ${}^{\sigma}\chi = \chi$ if and only if ${}^{n\sigma}\psi_{\A} = \psi_{\A}$.

Recall that any element of the quotient $\widetilde{\bG}^F/\bG^F$ can be represented by an element $t \in \widetilde{\bT}_0^F$. Assume $t$ is such an element and let $m \in \bT_0$ be such that $mtn_{w_1}t^{-1}F(m^{-1}) = n_{w_1}$. Applying \cref{thm:equivariance-DL-induction} we have
%%%%
\begin{equation*}
{}^t\chi = \chi \Leftrightarrow {}^tR_{I,w_1}^{\bG}(\psi) = R_{I,w_1}^{\bG}(\psi) \Leftrightarrow {}^{mt}\psi = \psi \Leftrightarrow {}^{mt}\psi_{\C} \otimes {}^{mt}\psi_{\A} = \psi_{\C} \otimes \psi_{\A}.
\end{equation*}
%%%%
It's clear that $mt$ induces a diagonal automorphism on $\bM_{\A}^{Fw_1}$ but as this group is a product of finite general linear and finite general unitary groups such an automorphism must be inner. This means ${}^{mt}\psi_{\A} = \psi_{\A}$ so we have ${}^t\chi = \chi$ if and only if ${}^{mt}\psi_{\C} = \psi_{\C}$. Finally we have
%%%%
\begin{equation*}
{}^{t\sigma}\chi = \chi \Leftrightarrow {}^{t\sigma}R_{I,w_1}^{\bG}(\psi) = R_{I,w_1}^{\bG}(\psi) \Leftrightarrow {}^{mtn\sigma}\psi = \psi \Leftrightarrow {}^{mt}\psi_{\C} \otimes {}^{n\sigma}\psi_{\A} = \psi_{\C} \otimes \psi_{\A},
\end{equation*}
%%%%
which holds if and only if ${}^{mt}\psi_{\C} = \psi_{\C}$ and ${}^{n\sigma}\psi_{\A} = \psi_{\A}$. This latter condition is equivalent to ${}^t\chi = \chi$ and ${}^{\sigma}\chi = \chi$ which completes the proof.
\end{proof}

\begingroup
\setstretch{0.96}
\renewcommand*{\bibfont}{\small}
\printbibliography
\endgroup
\end{document}